\newenvironment{proof}[1][Proof:]{\begin{trivlist} 
\item[\hskip \labelsep {\bfseries #1}]}{\end{trivlist}} 
\newcommand{\qed}{\nobreak \ifvmode \relax \else \ifdim\lastskip<1.5em \hskip-\lastskip \hskip1.5em plus0em minus0.5em \fi \nobreak \vrule height0.75em width0.5em depth0.25em\fi} 
\def\0{\bf \0}
\def\A{{\bf A}}
\def\B{{\bf B}}
\def\C{{\bf C}}
\def\D{{\bf D}}
\def\H{{\bf H}}
\def\I{{\bf I}}
\def\J{{\bf J}}
\def\L{{\bf L}}
\def\0{{\bf 0}}
\def\R{\mathbb{R}}
\def\S{{\bf S}}
\def\T{{\bf T}}
\def\W{{\bf W}}
\def\X{{\bf X}}
\def\Z{{\bf Z}}
\def\a{{\bf a}}
\def\b{{\bf b}}
\def\c{{\bf c}}
\def\d{{\bf d}}
\def\e{{\bf e}}
\def\g{{\bf g}}
\def\h{{\bf h}}
\def\k{{\bf k}}
\def\p{{\bf p}}
\def\s{{\bf s}}
\def\v{{\bf v}}
\def\w{{\bf w}}
\def\x{{\bf x}}
\def\y{{\bf y}}
\def\z{{\bf z}}
\def\T{{\rm T}}
\newtheorem{algorithm}{Algorithm}[section]
\newtheorem{theorem}{Theorem}[section]
\newtheorem{lemma}{Lemma}[section]
\newtheorem{proposition}{Proposition}[section]
\newtheorem{remark}{Remark}[section]
\newcommand*{\AN}[1]{(#1)}
\begin{document}
\title{An arc-search interior-point algorithm for nonlinear constrained optimization}
\author{
Yaguang Yang\thanks{Goddard Space Flight Center, NASA, 
8800 Greenbelt Rd, Greenbelt, MD 20771. 
Email: yaguang.yang@nasa.gov.} 
}

\date{\today}

\maketitle    % This command generates the title.

\begin{abstract}

This paper proposes a new arc-search interior-point algorithm for the nonlinear 
constrained optimization problem. The proposed algorithm uses the second-order
derivatives to construct a search arc that approaches the optimizer. Because the 
arc stays in the interior set longer than any straight line, it is expected that the
scheme will generate a better new iterate than a line search method.
The computation of the second-order derivatives requires to solve the
second linear system of equations, but the coefficient matrix of the second
linear system of equations is the same as the first linear system of equations.
Therefore, the matrix decomposition obtained while solving the first linear system 
of equations can be reused. In addition, most elements of the right-hand side 
vector of the second linear system of equations are already computed when the 
coefficient matrix is assembled. Therefore, the computation cost for solving 
the second linear system of equations is insignificant and the benefit of having
a better search scheme is well justified. The convergence of the proposed 
algorithm is established. Some preliminary test results are reported to demonstrate
the merit of the proposed algorithm. 
%Applications to spacecraft trajectory and orbit design are briefly discussed.
\end{abstract}

{\bf Keywords:} Arc-search, interior-point algorithm, nonlinear optimization.

{\bf MSC classification:}  90C30 90C51.
% \newpage
 
\section{Introduction}

The interior-point method has been very successful in solving the large scale linear 
optimization problem \cite{mehrotra92,wright97}. It is then extended
to solve the convex quadratic optimization problem \cite{ma89}, the general
convex optimization problem \cite{Monteiro94}, the linear complementarity 
problem \cite{kmy89}, and the semi-definite optimization problem \cite{aliz91}.
Not surprisingly, there are many interior-point algorithms developed for 
the nonlinear optimization problem 
\cite{bs08,bhn99,bgn00,bmn04,curtis12,ettz96,fgw02,gr12,ld20,ls04,nww09,plantenga98,twbul98,uuv04,vs99,wb05,wb06,yiy22}.
%The penalty approach has been used in many aforementioned papers.
Most of the proposed algorithms use the line search technique but the trust region technique 
is also considered by researchers, for example \cite{bhn99,bgn00,plantenga98}. 
For the linear optimization problem, the path-following interior-point method is most popular 
because of its efficiency, but for the nonlinear optimization problem, using logarithmic 
barrier functions is widely considered because of its clear connection to the penalty method, 
exceptions are \cite{ettz96,uuv04,yiy22}. Although using higher-order derivatives in 
interior-point method is regarded as a useful strategy for linear optimization problems 
\cite{mehrotra92,wright97}, it is not be adopted by almost all interior-point 
algorithms designed for nonlinear optimization problems as it was pointed out 
in \cite{yiy22} that the right-hand side vector of the 
second-order derivatives equations involves the computation of the third-order
derivatives of Lagrangian, which is very expensive.

In this paper, we propose a higher-order arc-search interior-point algorithm which is 
different from most existing algorithms in that (a) it is a path-following algorithm, its 
merit for linear optimization problems is demonstrated in \cite{wright97};
(b) it uses an arc instead of a straight line to search for an optimizer, a useful strategy 
that has been proved by different researchers for various optimization problems 
\cite{iy24,kheirfam17,kheirfam21,ylz17,yang11,yang13,yang18,yang20,zyzlh19,zhl21},
in particular, this author showed in \cite{yang18} that an arc-search infeasible-interior 
point algorithm using higher-order derivatives, searching in a widest neighborhood, 
and starting from an infeasible point, is computationally competitive to the famous 
Mehrotra's algorithm and has the best-known convergence rate, which solved 
a long-standing problem, puzzling the researchers for years \cite{todd02};
and (c) it solves two linear systems of equations with the same square coefficient matrix
using just one matrix decomposition, a brilliant idea introduced by Mehrotra 
\cite{mehrotra92} and used by interior-point algorithms designed for various 
optimization problems  \cite{kheirfam21,wright97,yang18}. 
The proposed algorithm improves the efficiency of the algorithm of \cite{yiy22} 
by avoiding the computation of the third-order derivatives of Lagrangian on 
the right-hand side of the second linear system of equations. Since
most elements of the reduced right-hand side of the second linear 
system of equations are obtained already while we construct the square coefficient 
matrix, the cost of solving the second linear system of equations is insignificant 
but we get a significantly better search arc. A warm-restart update scheme for the
relaxation variables and multipliers is introduced aiming at further improving the 
computational efficiency. The convergence of the proposed algorithm is proved under 
some mild conditions similar to the ones assumed in \cite{ettz96,yiy22}. 
Limited by the length of this paper, we provide only some preliminary test
results, and leave all implementation details and extensive test results in a follow-on 
paper. Some applications of the proposed algorithm to the spacecraft optimal
trajectory design problems  are reported in a recent conference paper \cite{yph24},
and another application to a spacecraft formation flying orbit optimization problem is 
discussed in \cite{yang25}.

The remainder of the paper is organized as follows.
Section~\ref{sec:description} introduces the nonlinear optimization problem.
Section~\ref{sec:algorithm} presents the proposed arc-search
interior-point algorithms. Section~\ref{sec:convergence} discusses
their convergence properties. Section \ref{sec:test} provides 
preliminary numerical test results on some benchmark test problems. 
Section~\ref{sec:conclusions} summarizes the conclusion of the paper. 
%The analytic formulas that calculate the step size for boundary constraints 
%is provided in Appendix 2 of \cite{yiy22}. %\ref{sec:selection}. 

\section{Problem description}
\label{sec:description}

The following notations will be used throughout this paper. We use bold small
letters for vectors, bold capital letters for matrices, and normal font for scalars. 
For a vector $\x$, we use $x_i$ for the $i$-th element of $\x$, and $\X$ for a 
diagonal matrix whose diagonal elements are the vector $\x$. For vectors or
matrices, we will use the superscript $k$ to 
represent their values at the $k$-th iteration. For scalars, we use the subscript $k$
to represent their values at the $k$-th iteration, while using the superscript to 
represent their powers. Finally, we use $\R_+^n$ ($\R_{++}^n$)  to
denote the space of nonnegative vectors (positive vectors, respectively),
and use $\e$ to denote a vector of all ones with appropriate dimension.

In this paper, we consider the following nonlinear optimization problem.
\begin{align}
\begin{array}{rcl}
\min &:& f(\x) \\
\textrm{s.t.} &:& \h(\x) = \0, \label{NP}\\
& & \g(\x) \ge \0,
\end{array}
\end{align}
where $f: \R^n \rightarrow \R$,  $\h: \R^n \rightarrow \R^m$, and 
$\g: \R^n \rightarrow \R^p$. To ensure that the system has a solution, we assume
$n>m \ge 0$. Without loss of generality, we assume $p \ge 1$, that is needed
for interior-point method and simplifies the discussion. The decision vector is $\x \in \R^n$. 
We introduce a nonnegative relaxation vector $\s \ge \0$ to rewrite (\ref{NP}) as
\begin{align}
\begin{array}{rcl}
\min &:&  f(\x)  \\
\textrm{s.t.} &:&  \h(\x)=\0, \label{NP1} \\
& & \g(\x) - \s = \0, \\
& & \s \ge \0.
\end{array}
\end{align}
For problem (\ref{NP1}), the Lagrangian function is given by
\begin{equation}
L(\x,\y,\w,\s,\z)=f(\x)-\y^{\T}\h(\x)-\w^{\T}(\g(\x)-\s)-\z^{\T}\s,
\label{lagrangian1}
\end{equation}
where $\y \in \R^m, \w \in \R^p_+$, and $\z \in \R^p_+$  are multipliers.
To simplify the notation, we use $\v= (\x,\y,\w,\s,\z) \in \R^{n+m+3p}$ to 
denote a stacked column vector composed of decision variables, relaxation 
variables, and multipliers. 
The gradients of the Lagrangian function with respect to $\x$ and $\s$ are
given by
\begin{subequations}
\begin{gather}
\nabla_{\x} L(\v)=\nabla f(\x)-\nabla \h(\x) \y-\nabla \g(\x) \w,
\\
\nabla_{\s} L(\v)=\w-\z,
\end{gather}
\label{dlagrangian}
\end{subequations}
respectively, where $\nabla \h(\x) \in \R^{n \times m}$ and
$\nabla \g(\x) \in \R^{n \times p}$ are Jacobians given by
\begin{align*}
\nabla \h(\x)  = \left[ \begin{array}{cccc}
\frac{\partial  h_1}{\partial x_1 }  &
\frac{\partial  h_2}{\partial x_1 }  &
\cdots &
\frac{\partial  h_m}{\partial x_1 }  \\
\vdots  & \vdots  & \ddots  & \vdots \\
\frac{\partial  h_1}{\partial x_n }  &
\frac{\partial  h_2}{\partial x_n }  &
\cdots &
\frac{\partial  h_m}{\partial x_n } 
\end{array} \right] 
=[\nabla h_1(\x),   \cdots, \nabla  h_m(\x) ],
\end{align*}
and
\begin{align*}
\nabla \g(\x)  = \left[ \begin{array}{cccc}
\frac{\partial  g_1}{\partial x_1 }  &
\frac{\partial  g_2}{\partial x_1 }  &
\cdots &
\frac{\partial  g_p}{\partial x_1 }  \\
\vdots  & \vdots  & \ddots  & \vdots \\
\frac{\partial  g_1}{\partial x_n }  &
\frac{\partial  g_2}{\partial x_n }  &
\cdots &
\frac{\partial  g_p}{\partial x_n }
\end{array} \right] 
=[\nabla g_1(\x),   \cdots, \nabla  g_p(\x) ] .
\end{align*}
Then, the KKT conditions of (\ref{NP1}) are given by (see \cite{nw06}):
\begin{align}
\k(\v)  = \left[ \begin{array}{c}
\nabla_{\x} L(\v)  \\
\h(\x)  \\
\g(\x)-\s \\
\w-\z  \\
\Z\s
\end{array} \right] =  \0,  \
(\w, \s, \z) \in \R_+^{3p}, 
\label{KKT1}
\end{align}
where $\k:\R^{n+m+3p} \to \R^{n+m+3p}$. It is worthwhile to note that $\nabla_{\x} L(\v)$
depends on the choice of the multiplier vector $\y$ which has no restriction as long as its elements
are real numbers. In addition, the multiplier $\y$ is unrelated to any other components of $\k(\v)$.
Moreover, $\s > \0$ is unrelated to any components of $\k(\v)$ except $\g(\x)-\s=\0$.
These features will be used while we introduce a warm-restart scheme to minimize $\| \k(\v)  \|$. 
For the vector $\k(\v)$, its value at the $k$-th iteration is expressed by
\begin{align}
\k(\v^k)  = \left[ \begin{array}{c}
\nabla f(\x^k)-\nabla \h(\x^k) \y^k-\nabla \g(\x^k) \w^k  \\
\h(\x^k)  \\
\g(\x^k)-\s^k \\
\w^k-\z^k  \\
\Z^k \s^k
\end{array} \right].
\label{KKT2}
\end{align}
Therefore, the Jacobian of $\k(\v)$ is given by
\begin{align}
\k' (\v) = \left[ \begin{array}{ccccc}
\nabla_{\x}^2 L(\v) & -\nabla \h(\x) & -\nabla \g(\x) & \0 & \0 \\
\left( \nabla \h(\x)\right)^{\T} & \0 & \0 & \0 & \0  \\
\left( \nabla \g(\x)\right)^{\T} & \0 & \0 & -\I & \0 \\
\0  & \0  & \I  &  \0  &  -\I  \\
\0 & \0 & \0  &  \Z & \S
\end{array} \right], 
\label{firstJacobian}
\end{align}
where 
\begin{equation}
\nabla_{\x}^2 L(\v) = \nabla_{\x}^2 f(\x) 
- \sum_{i=1}^m \nabla_{\x}^2 h_i(\x)  y_i
- \sum_{i=1}^p \nabla_{\x}^2 g_i(\x)  w_i.
\label{2ndL}
\end{equation}
Let the index set of the active inequality constraints $\g(\x)$ at $\x \in \R^n$ 
be denoted by
\begin{equation}
I(\x)=\left\{i \in \{1,\ldots,p\} : g_i(\x)=0 \right\}.
\label{index}
\end{equation}
For convergence analysis, we need the definition of Lipschitz continuity. Let 
$\alpha(\x)$ and $\beta(\x)$ be real functions of $\x$. Let an open set
$\mathcal{M}$ contain the level set 
$\mathcal{L}=\{ \x~|~ \alpha(\x) \le \alpha(\x_0) \}$, we say $\beta(\x)$ is 
Lipschitz continuous if there is a constant $L>0$ such that 
\begin{equation}
\| \beta(\x)-\beta(\y) \| \le L \| \x - \y \|,
\label{lipschitz}
\end{equation}
for all $\x, \y \in \mathcal{M}$.
Under some mild assumptions used in \cite{ettz96,nw06,yiy22}, similar to the proof of 
\cite{yang22}, it is straightforward to show the following result.

\begin{proposition}
Assume existence, smoothness, regularity, sufficiency, and strict complementarity
for problem (\ref{NP1}) described as follows:
\begin{itemize}
\item[(A1)] Existence. There exists an optimal solution and its associate 
multipliers $\v^*=(\x^*,\y^*,\w^*,\s^*,\z^*)$ of (\ref{NP1}). 
Clearly, the KKT conditions (\ref{KKT1}) hold for any optimal solution.
\item[(A2)] Smoothness. $f(\x)$, $\h(\x)$ and $\g(\x)$ are differentiable up to 
the third order. In addition, $\nabla_{\x} L(\x)$, $\g(\x)$, and $\h(\x)$ 
are Lipschitz continuous.
\item[(A3)]  Regularity. The set $\{ \nabla h_j(\x^*) : j =1, \ldots, m\} \cup
\{ \nabla g_i(\x^*)  : i \in I(\x^*) \}$ is linearly independent.
\item[(A4)] Sufficiency. For all $\eta  \in \R^n \backslash \{0\}$,
we have $ \eta^{\T} \nabla_{\x}^2 L(\v^*)  \eta >0$.
\item[(A5)]  Strict complementarity. For each
$i \in \{ 1, \ldots, p \}$, we have $z_i^*+s_i^* >0$ and $z_i^* s_i^* =0$.
\end{itemize}
Then the Jacobian matrix at the optimal 
solution $\v^*$ is nonsingular, i.e., $[k' (\v^*)]^{-1}$ exists and is bounded.
\label{yang22}
\end{proposition}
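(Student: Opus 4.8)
The plan is to establish nonsingularity of $\k'(\v^*)$ directly, by showing that the homogeneous system $\k'(\v^*)\Delta\v = \0$ admits only the trivial solution; once $\k'(\v^*)$ is a fixed invertible matrix of finite dimension, its inverse automatically exists and is bounded in norm, so the ``bounded'' claim needs no separate argument. Write $\Delta\v = (\Delta\x, \Delta\y, \Delta\w, \Delta\s, \Delta\z)$. Expanding $\k'(\v^*)\Delta\v = \0$ according to (\ref{firstJacobian}) produces five block equations: $\nabla_{\x}^2 L(\v^*)\Delta\x - \nabla\h(\x^*)\Delta\y - \nabla\g(\x^*)\Delta\w = \0$; $\nabla\h(\x^*)^{\T}\Delta\x = \0$; $\nabla\g(\x^*)^{\T}\Delta\x - \Delta\s = \0$; $\Delta\w - \Delta\z = \0$; and $\Z^*\Delta\s + \S^*\Delta\z = \0$.

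First I would use the last block equation together with strict complementarity (A5). Since $s_i^* = g_i(\x^*)$, an index $i$ is active ($i \in I(\x^*)$) exactly when $s_i^* = 0$, in which case $z_i^* > 0$ by (A5); the $i$-th component of $\Z^*\Delta\s + \S^*\Delta\z = \0$, namely $z_i^*\Delta s_i + s_i^*\Delta z_i = 0$, then forces $\Delta s_i = 0$, and the third block equation gives $\nabla g_i(\x^*)^{\T}\Delta\x = 0$. For an inactive index $i \notin I(\x^*)$ we have $s_i^* > 0$ and $z_i^* = 0$, so the same component forces $\Delta z_i = 0$, whence the fourth block equation gives $\Delta w_i = 0$. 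Consequently $\nabla\g(\x^*)\Delta\w = \sum_{i \in I(\x^*)}\nabla g_i(\x^*)\Delta w_i$.

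Next I would take the inner product of the first block equation with $\Delta\x$. The term $\Delta\x^{\T}\nabla\h(\x^*)\Delta\y$ vanishes by the second block equation, and $\Delta\x^{\T}\nabla\g(\x^*)\Delta\w = \sum_{i \in I(\x^*)}(\nabla g_i(\x^*)^{\T}\Delta\x)\Delta w_i = 0$ by the previous step, leaving $\Delta\x^{\T}\nabla_{\x}^2 L(\v^*)\Delta\x = 0$; sufficiency (A4) then yields $\Delta\x = \0$. Feeding this back, the third block equation gives $\Delta\s = \0$, and the first block equation collapses to $\sum_{j=1}^m \nabla h_j(\x^*)\Delta y_j + \sum_{i \in I(\x^*)}\nabla g_i(\x^*)\Delta w_i = \0$. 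Regularity (A3) makes this family of gradients linearly independent, so $\Delta\y = \0$ and $\Delta w_i = 0$ for every $i \in I(\x^*)$; since the inactive components of $\Delta\w$ were already shown to vanish, $\Delta\w = \0$, and finally the fourth block equation gives $\Delta\z = \0$. Thus $\Delta\v = \0$, so $\k'(\v^*)$ is nonsingular, and $[k'(\v^*)]^{-1}$ exists and is bounded.

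I expect the only delicate point to be the bookkeeping in the complementarity step, namely correctly pairing $s_i^* = 0$ with $z_i^* > 0$ (and $s_i^* > 0$ with $z_i^* = 0$) via (A5), and verifying that the cross terms in the inner-product step genuinely cancel along the active/inactive split; the remaining steps are routine linear algebra. This argument mirrors the one given in \cite{yang22} for the analogous linear and quadratic settings, which is why the excerpt can invoke it as ``straightforward''.
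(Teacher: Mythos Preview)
Your argument is correct and is precisely the standard nonsingularity proof the paper is pointing to when it says the result follows ``similar to the proof of \cite{yang22}'' without spelling out details. The paper gives no explicit proof of this proposition, so there is nothing to compare beyond noting that your homogeneous-system argument using (A3)--(A5) is exactly the expected route.
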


\begin{remark}
Assuming the continuity of $[k' (\v^*)]^{-1}$, Proposition~\ref{yang22} implies that 
all local optimizers are isolated.
\end{remark}

\section{The arc-search interior-point algorithm}
\label{sec:algorithm}

The main idea of the proposed arc-search interior-point algorithm is to extend
a similar algorithm for linear programming presented in \cite{yang18} to the 
nonlinear programming problem. Given the current iterate  
${\v^k}=({\x^k},{\y^k},{\w^k},{\s^k},{\z^k})$ with $({\w^k},{\s^k},{\z^k})>\0$ 
and $t \ge 0$, we consider the solution $\v(t)=(\x({t}), \y({t}), {\w}({t}), {\s}({t}), {\z}({t}))$ 
of the following nonlinear system of equations defined for $t \in [0,1]$
\begin{eqnarray}
\left[ \begin{array}{l}
\nabla_{\x} L({\v}({t})) \\
\h({\x}({t})) \\
\g({\x}({t}))-\s({t}) \\
\nabla_{\s} L({\v}({t}))  \\
{\Z}({t}){\s}({t})
\end{array} \right]= t \cdot \left[ \begin{array}{l}
\nabla_{\x} L({\v^k}) \\
\h({\x^k}) \\
(\g({\x^k})-{\s^k}) \\
\nabla_s L({\v^k})  \\
{\Z^k} {\s^k} 
\end{array} \right],
\hspace{0.1in}
({\w}({t}), {\s}({t}), {\z}({t})) \ge \0.
\label{KKTcurve}
\end{eqnarray}
Clearly, when $t=1$, the system has a solution of $\v(1)=\v^k$; and 
when $t \rightarrow 0$, the system approaches the solution of the KKT system.
Under some mild conditions to be defined in (B1)-(B4), and using the inverse function
theorem \cite{luenberger69}, Yamashita et al. \cite{yiy22} showed the following result.
\begin{proposition}[\cite{yiy22}]
Let $\Omega(\epsilon)$ be defined in (\ref{epsilonSet}), assume that conditions 
(B1)-(B4) given in Section~\ref{sec:convergence} hold,
and $\v^k \subset \Omega(\epsilon)$, then the solution $\v(t)$ of the system of 
the nonlinear equations (\ref{KKTcurve}) is unique for every $t \in (0,1]$.
\label{yiy22}
\end{proposition}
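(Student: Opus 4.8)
The plan is to regard (\ref{KKTcurve}) as a homotopy in $t$ and to combine a local injectivity estimate (which gives uniqueness) with a continuation argument from the endpoint $t=1$ (which gives existence for all $t\in(0,1]$). Write $\k(\v)$ for the KKT residual map of (\ref{KKT1}) and set $F(\v,t) = \k(\v) - t\,\k(\v^k)$ on $\R^{n+m+3p}\times[0,1]$, so that $\partial_{\v} F(\v,t) = \k'(\v)$ is the matrix (\ref{firstJacobian}). Note $F(\v^k,1)=\0$, so $t=1$ already carries the solution $\v^k\in\Omega(\epsilon)$; and for any solution with $t\in(0,1]$ the last block forces $\Z(t)\s(t)=t\,\Z^k\s^k>\0$, hence $(\s(t),\z(t))>\0$ automatically, leaving only the sign of $\w(t)$ to be controlled separately (which is part of what the definition of $\Omega(\epsilon)$ does).

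The first ingredient is to upgrade Proposition~\ref{yang22} from the single optimizer $\v^*$ to the whole region: conditions (B1)-(B4) are designed so that $[\k'(\v)]^{-1}$ exists and is bounded by one constant $C$ for every $\v\in\Omega(\epsilon)$. Given this, uniqueness for each fixed $t$ follows from a standard estimate: if $\v_1,\v_2\in\Omega(\epsilon)$ both solve $F(\cdot,t)=\0$, then $\0=\k(\v_1)-\k(\v_2)=\int_0^1 \k'(\v_2+\tau(\v_1-\v_2))\,d\tau\,(\v_1-\v_2)$, and the integrated Jacobian is invertible once $\Omega(\epsilon)$ is small relative to $C$ and the Lipschitz constant of $\k'$ (assumption (A2)), forcing $\v_1=\v_2$.

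It then remains to produce, for every $t\in(0,1]$, a solution inside $\Omega(\epsilon)$. Let $S=\{\,t\in(0,1]:F(\cdot,t)=\0 \text{ has a solution in }\Omega(\epsilon)\,\}$. It contains $1$, and it is open by the implicit function theorem: around any $t_0$ with a solution $\v(t_0)\in\Omega(\epsilon)$, nonsingularity of $\k'(\v(t_0))$ yields a $C^1$ branch $t\mapsto\v(t)$ solving $F(\v(t),t)=\0$ on a neighborhood of $t_0$, remaining in $\Omega(\epsilon)$ by continuity. For closedness, take $t_j\to\bar t\in(0,1]$ with $t_j\in S$ and solutions $\v(t_j)\in\Omega(\epsilon)$; conditions (B1)-(B4) supply the a priori bounds that confine $\{\v(t_j)\}$ to a compact subset of $\Omega(\epsilon)$ bounded away from the boundary of $\R_{++}^{3p}$, so a subsequence converges to some $\bar\v\in\Omega(\epsilon)$ with $F(\bar\v,\bar t)=\0$, i.e., $\bar t\in S$. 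Thus $S$ is a nonempty subset of the connected set $(0,1]$ that is both open and closed, so $S=(0,1]$; together with the uniqueness estimate this proves the claim.

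The hard part will be the a priori estimates underlying both the assertion in the openness step that the branch stays in $\Omega(\epsilon)$ and the compactness in the closedness step: one must rule out that the continuation path loses strict positivity of $(\w(t),\s(t),\z(t))$ or diverges as $t\to 0^+$, and one must verify that $\k'(\v)$ is invertible with a uniform bound throughout $\Omega(\epsilon)$ rather than only at $\v^*$. Both rest on the structural hypotheses (B1)-(B4) — regularity of the active-constraint gradients, second-order sufficiency, and the boundary and coercivity control built into the definition of $\Omega(\epsilon)$ in (\ref{epsilonSet}) — which is precisely where the analysis of \cite{yiy22} concentrates its effort.
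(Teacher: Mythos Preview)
The paper does not give its own proof; it attributes the result to \cite{yiy22} and, in the sentence preceding the statement, says only that it follows ``using the inverse function theorem.'' Since $\k'(\v)$ is nonsingular for $\v\in\Omega(\epsilon)$ (this is Lemma~\ref{FpInv}), the implicit function theorem at $(\v^k,1)$ produces a unique $C^1$ branch $t\mapsto\v(t)$ with $\v(1)=\v^k$, and the proposition is asserting the well-definedness of this branch --- not global uniqueness within $\Omega(\epsilon)$ for each fixed $t$. Your continuation argument for existence is in the right spirit and is more explicit than the paper's one-line citation, and the obstacles you flag in your last paragraph (the branch staying inside $\Omega(\epsilon)$, a priori bounds for compactness) are indeed the real ones.

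Your uniqueness step, however, has a gap you do not flag. You assert that the averaged Jacobian $\int_0^1 \k'(\v_2+\tau(\v_1-\v_2))\,d\tau$ is invertible ``once $\Omega(\epsilon)$ is small relative to $C$ and the Lipschitz constant of $\k'$.'' But $\Omega(\epsilon)$ is not small and cannot be chosen small: by (\ref{epsilonSet}) it is the entire region $\epsilon\le\phi(\v)\le\phi(\v^0)$, which contains the whole iteration history, and nothing in (B1)--(B4) bounds its diameter. Pointwise nonsingularity of $\k'$ on a set does not force the averaged Jacobian over a long segment to be nonsingular (the complex exponential, viewed as a map $\R^2\to\R^2$, has everywhere nonsingular Jacobian yet is not injective), and the segment from $\v_1$ to $\v_2$ need not even lie in $\Omega(\epsilon)$, which is not assumed convex. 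What the inverse function theorem actually delivers is \emph{local} injectivity of $\k$, hence uniqueness of the continuation branch through $\v^k$; you are conflating this with a global injectivity claim that the hypotheses do not support and that the proposition does not require.
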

Therefore, $\v(t) \in \R^{n}\times \R^{m}\times \R^{p}\times  \R^{p}\times \R^{p}$ 
is an arc that starts at $\v^k$ and guides the current iterate to a candidate of a local
optimal solution of (\ref{NP}) because (\ref{KKTcurve}) reduces to the KKT conditions
when $t \rightarrow 0$. Therefore, the proposed algorithm is a path-following algorithm.

However, solving the system of nonlinear equations of (\ref{KKTcurve}) is impractical.
Similar to the idea used in \cite{yiy22,yang18}, our strategy is to approximate the arc $\v(t)$
by part of an ellipse which is defined as:
\begin{equation}
{\cal E}=\lbrace \v (\alpha): 
\v (\alpha) =
\vec{\a}\cos(\alpha)+\vec{\b}\sin(\alpha)+\vec{\c}, \alpha \in [0, 2\pi] \rbrace,
\label{ellipse}
\end{equation}
where $\vec{\a} \in \R^{n+m+3p}$ and $\vec{\b} \in \R^{n+m+3p}$ are 
the axes of the ellipse, and $\vec{\c} \in \R^{n+m+3p}$ is its center. 
Then, we search the optimizer along the ellipse $\mathcal{E}$ to get a
new iterate $\v^{k+1}$ and repeat the process until an $\epsilon$-optimizer 
is found. To construct the ellipse of (\ref{ellipse}), we assume that
the first and second derivatives of (\ref{KKTcurve}) and (\ref{ellipse}) 
are the same at current iterate $\v^k$ because (\ref{ellipse}) is an estimate
of (\ref{KKTcurve}).

Taking the derivative on both sides of (\ref{KKTcurve}) yields
\begin{equation}
\left[ \begin{array}{ccccc}
\nabla_{\x}^2 L({\v^k}) & -\nabla \h({\x^k}) & -\nabla \g({\x^k}) & \0  & \0 \\
\left( \nabla \h({\x^k})\right)^{\T} & \0 & \0 & \0 & \0  \\
\left( \nabla \g({\x^k})\right)^{\T} & \0 & \0 & -\I  & \0 \\
\0  & \0  &  \I  &  \0  &  -\I  \\
\0 & \0  & \0 & \Z^k & \S^k
\end{array} \right]
\left[ \begin{array}{c}
\dot{\x} \\ \dot{\y} \\ \dot{\w} \\ \dot{\s}  \\ \dot{\z}
\end{array} \right]
= \left[ \begin{array}{c}
\nabla_{\x} L({\v^k}) \\
\h({\x^k}) \\
\g({\x^k})-{\s^k} \\
{\w^k}-{\z^k}  \\
\Z^k {\s^k} 
\end{array} \right].
\label{firstOrder}
\end{equation}

It has been shown in \cite{wright97} that the search performance may
be improved by adding a centering term in (\ref{firstOrder}). Let the dual measure 
be defined as:
\begin{equation}
{\mu}=\frac{{\z}^{\T}{\s}}{p}.
\label{dualM}
\end{equation}
Following the common strategy used in most interior point algorithms (see \cite{wright97}), 
we introduce a centering parameter $\sigma$ and modify (\ref{firstOrder}) as:
\begin{equation}
\left[ \begin{array}{ccccc}
\nabla_{\x}^2 L({\v^k}) & -\nabla \h({\x^k}) & -\nabla \g({\x^k}) & \0  & \0 \\
\left( \nabla \h({\x^k})\right)^{\T} & \0 & \0 & \0 & \0  \\
\left( \nabla \g({\x^k})\right)^{\T} & \0 & \0 & -\I  & \0 \\
\0  & \0  &  \I  &  \0  &  -\I  \\
\0 & \0  & \0 & \Z^k & \S^k
\end{array} \right]
\left[ \begin{array}{c}
\dot{\x} \\ \dot{\y} \\ \dot{\w} \\ \dot{\s}  \\ \dot{\z}
\end{array} \right]
= \left[ \begin{array}{c}
\nabla_{\x} L({\v^k}) \\
\h({\x^k}) \\
\g({\x^k})-{\s^k} \\
{\w^k}-{\z^k}  \\
\Z^k {\s^k}  -\sigma {\mu}_k \e
\end{array} \right].
\label{firstOrderM}
\end{equation}
where $\e$ is an all-one vector of dimension $p$. This modification enforces the arc $\v(\alpha)$
toward the interior of the constraints, thereby makes sure that a substantial segment of the 
ellipse satisfies $(\w(\alpha), \s(\alpha), \z(\alpha))>\0$, which may increase the step size and 
improve the efficiency of the algorithm. Since (\ref{firstOrder}) is a special case of (\ref{firstOrderM})
with $\sigma=0$, we will use (\ref{firstOrderM}) throughout the rest paper.

Since the right-hand side of (\ref{firstOrderM}) at $\v^k$ is a constant vector,
taking the derivative on both sides of (\ref{firstOrderM}) yields
\begin{eqnarray}
\left[ \begin{array}{ccccc}
\nabla_{\x}^2 L(\v^k) & -\nabla \h(\x^k) & -\nabla \g(\x^k) & \0  & \0 \\
\left( \nabla \h(\x^k)\right)^{\T} & \0 & \0 & \0 & \0  \\
\left( \nabla \g(\x^k)\right)^{\T} & \0 & \0 & -\I  & \0 \\
\0  & \0  &  \I  &  \0  &  -\I  \\
\0 & \0 & \0  & \Z^k & \S^k
\end{array} \right]
\left[ \begin{array}{c}
\ddot{\x} \\ \ddot{\y} \\ \ddot{\w} \\ \ddot{\s} \\ \ddot{\z}
\end{array} \right]
=  \left[ \begin{array}{c}
%-(\nabla_{\x}^3 L(\v))\dot{\x} \dot{\x}
%-2(\nabla_{\x}^2 \h(\x))\dot{\y} \dot{\x}
%+2(\nabla_{\x}^2 \g(\x))\dot{\z} \dot{\x} 
\p(\v^k) \\
-(\nabla_{\x}^2 \h(\x^k))^{\T} \dot{\x} \dot{\x} \\
-(\nabla_{\x}^2 \g(\x^k))^{\T} \dot{\x} \dot{\x} \\
\0  \\
-2\dot{\Z} \dot{\s} 
\end{array} \right],
\label{secondOrder}
\end{eqnarray}
where 
\[
\p(\v^k)=-(\nabla_{\x}^3 L(\v^k))\dot{\x} \dot{\x}
+2(\nabla_{\x}^2 \h(\x^k))\dot{\y} \dot{\x}
+2(\nabla_{\x}^2 \g(\x^k))\dot{\w} \dot{\x}.
\]
The formulas for computing the elements on the right-hand side of (\ref{secondOrder}) 
can be found in \cite[Appendix 1]{yiy22}, and we provide them below for completeness.

\begin{eqnarray}
\nabla_{\x}^3 L(\x,\y,\z)\dot{\x} \dot{\x} 
&=& 
\frac{\partial \left( \frac{\partial^2 L(\x,\y,\z)}{\partial \x^2} \dot{\x} \right) }
{\partial \x} \dot{\x} 
=  \sum_{i=1}^n  \dot{x}_i \frac{\partial}{\partial \x}
\left[ \begin{array}{c}
	\frac{\partial^2 L(\x,\y,\z)}{\partial x_1 \partial x_i} \\
	\vdots \\
	\frac{\partial^2 L(\x,\y,\z)}{\partial x_n \partial x_i}
\end{array} \right] \dot{\x} 
\label{Lxx}
\end{eqnarray}
\begin{eqnarray}
\nabla_{\x}^2 \h(\x)\dot{\y} \dot{\x} 
&=&
\frac{\partial \left( \frac{\partial  \h(\x)}{\partial \x } \dot{\y} \right) }
{\partial \x} \dot{\x} 
=  \sum_{i=1}^m \dot{y}_i \frac{\partial}{\partial \x}
\left[ \begin{array}{c}
	\frac{\partial  h_i(\x)}{\partial x_1 } \\
	\vdots \\
	\frac{\partial  h_i(\x)}{\partial x_n} 
\end{array} \right] \dot{\x}
=  \sum_{i=1}^m \dot{y}_i
\left( \nabla_{\x}^2 h_i(\x) \right)  \dot{\x} 
\label{hyx}
\end{eqnarray}
\begin{eqnarray}
\nabla_{\x}^2 \g(\x)\dot{\w} \dot{\x} 
&=&
\frac{\partial \left( \frac{\partial  \g(\x)}{\partial \x } \dot{\w} \right) }
{\partial \x} \dot{\x} 
=  \sum_{i=1}^n \dot{w}_i \frac{\partial}{\partial \x}
\left[ \begin{array}{c}
	\frac{\partial  g_i(\x)}{\partial x_1 } \\
	\vdots \\
	\frac{\partial g_i(\x)}{\partial x_n} 
\end{array} \right] \dot{\x}
= \sum_{i=1}^n \dot{w}_i
\left( \nabla_{\x}^2 g_i(\x) \right)  \dot{\x}  
\label{gzx}
\end{eqnarray}
\begin{eqnarray}
\nabla_{\x}^2 \h(\x)^{\T} \dot{\x}  \dot{\x}
&=&
\left( \frac{\partial \left( \left( \frac{\partial  \h(\x)}
	{\partial \x } \right)^{\T} \dot{\x} \right) }{\partial \x} \right)^{\T} \dot{\x}
=  
\left[ \begin{array}{c}
	\dot{\x}^{\T} \left( \nabla_{\x}^2 h_1(\x) \right)  \dot{\x} \\
	\vdots \\
	\dot{\x}^{\T} \left( \nabla_{\x}^2 h_m(\x) \right)  \dot{\x} 
\end{array} \right] 
\label{hxx}
\end{eqnarray}
\begin{eqnarray}
\nabla_{\x}^2 \g(\x)^{\T} \dot{\x}  \dot{\x}
&=&
\left( \frac{\partial \left( \left( \frac{\partial \g(\x)}
	{\partial \x } \right)^{\T} \dot{\x} \right) }{\partial \x} \right)^{\T} \dot{\x}
=  
\left[ \begin{array}{c}
	\dot{\x}^{\T} \left( \nabla_{\x}^2 g_1(\x) \right)  \dot{\x} \\
	\vdots \\
	\dot{\x}^{\T} \left( \nabla_{\x}^2 g_p(\x) \right)  \dot{\x} 
\end{array} \right].
\label{gxx}
\end{eqnarray}

\begin{remark}
It is noteworthy that the Hessian matrices of $\nabla_{\x}^2 h_i(\x)$ and
$\nabla_{\x}^2 g_i(\x)$ in (\ref{hyx}), (\ref{gzx}), (\ref{hxx}), and (\ref{gxx}) 
are obtained when $\nabla_{\x}^2 L(\v)$ is computed in (\ref{2ndL}). 
The major burden of the computation of the right-hand side
of (\ref{secondOrder}) is 
\begin{eqnarray}
& & \frac{\partial}{\partial \x} \left[ \begin{array}{c}
	\frac{\partial^2 L(\x,\y,\z)}{\partial x_1 \partial x_i} \\
	\vdots \\
	\frac{\partial^2 L(\x,\y,\z)}{\partial x_n \partial x_i}
\end{array} \right] 
= 
\sum_{i=1, \ldots, n} \frac{\partial^2}{\partial {\x}^2} 
\left[ 	\frac{\partial L(\x,\y,\z)}{\partial x_i} \right]  \label{extraBurden}  \\
&=&
\sum_{i=1, \ldots, n} \frac{\partial^2 }{\partial {\x}^2}\left( \frac{df({\x})}{dx_i} \right)
-\sum_{\tiny\begin{array}{c} i=1, \ldots, m \nonumber \\ j=1, \ldots, n \end{array}\normalsize} 
y_i \frac{\partial^2 }{\partial {\x}^2}\left( \frac{dh_i({\x})}{dx_j} \right)
-\sum_{\tiny\begin{array}{c} i=1, \ldots, p \nonumber \\ j=1, \ldots, n \end{array}\normalsize} 
w_i \frac{\partial^2 }{\partial {\x}^2}\left( \frac{dg_i({\x})}{dx_j} \right).
\end{eqnarray}
For quadratically constrained quadratic programming (QCQP) problem,
the increased computational burden of the right-hand side of 
(\ref{secondOrder}) is minimum because all terms in (\ref{extraBurden}) 
become zeros. 
\end{remark}

The computation of (\ref{extraBurden}) is expensive and adversely affects
the performance of the algorithm of \cite{yiy22}. Assuming the third-order
derivatives are relatively small (at least when iterates are close to the optimal solution),
we consider an alternative approximation by modifying (\ref{secondOrder}) to:
\footnotesize
\begin{eqnarray}
\left[ \begin{array}{ccccc}
\nabla_{\x}^2 L(\v^k) & -\nabla \h(\x^k) & -\nabla \g(\x^k) & \0  & \0 \\
\left( \nabla \h(\x^k)\right)^{\T} & \0 & \0 & \0 & \0  \\
\left( \nabla \g(\x^k)\right)^{\T} & \0 & \0 & -\I  & \0 \\
\0  & \0  &  \I  &  \0  &  -\I  \\
\0 & \0 & \0  & \Z^k & \S^k
\end{array} \right]
\left[ \begin{array}{c}
\ddot{\x} \\ \ddot{\y} \\ \ddot{\w} \\ \ddot{\s} \\ \ddot{\z}
\end{array} \right]
=  \left[ \begin{array}{c}
2[(\nabla_{\x}^2 \g(\x^k))\dot{\w} \dot{\x}+(\nabla_{\x}^2 \h(\x^k))\dot{\y} \dot{\x}] \\
-(\nabla_{\x}^2 \h(\x^k))^{\T} \dot{\x} \dot{\x} \\
-(\nabla_{\x}^2 \g(\x^k))^{\T} \dot{\x} \dot{\x} \\
\0  \\
-2\dot{\Z} \dot{\s} 
\end{array} \right],
\label{secondOrderM}
\end{eqnarray}
\normalsize
which ignore the third-order derivatives on the right-hand side, or even a simpler formula 
which ignore all high-order derivatives on the right-hand side, i.e.,
\begin{eqnarray}
\left[ \begin{array}{ccccc}
\nabla_{\x}^2 L(\v^k) & -\nabla \h(\x^k) & -\nabla \g(\x^k) & \0  & \0 \\
\left( \nabla \h(\x^k)\right)^{\T} & \0 & \0 & \0 & \0  \\
\left( \nabla \g(\x^k)\right)^{\T} & \0 & \0 & -\I  & \0 \\
\0  & \0  &  \I  &  \0  &  -\I  \\
\0 & \0 & \0  & \Z^k & \S^k
\end{array} \right]
\left[ \begin{array}{c}
\ddot{\x} \\ \ddot{\y} \\ \ddot{\w} \\ \ddot{\s} \\ \ddot{\z}
\end{array} \right]
=  \left[ \begin{array}{c}
\0 \\
\0 \\
\0 \\
\0  \\
-2\dot{\Z} \dot{\s} 
\end{array} \right].
\label{secondOrderM1}
\end{eqnarray}
Our preliminary numerical experience quickly denies the idea of using (\ref{secondOrderM1}) 
in the construction of the ellipse, because it oversimplifies the right-hand side. 
Although, using (\ref{secondOrderM1}) does reduce the computational cost
in every iteration, but it significantly increases iteration numbers, and overall is less
efficient than using (\ref{secondOrder}) or (\ref{secondOrderM}).

Let $\dot{\v}=(\dot{\x},\dot{\y},\dot{\w},\dot{\s},\dot{\z})$ be obtained by 
using (\ref{firstOrderM}), and $\ddot{\v}=(\ddot{\x},\ddot{\y},\ddot{\w},\ddot{\s},\ddot{\z})$ 
be obtained by using either (\ref{secondOrder}) or (\ref{secondOrderM}), which are 
the first-order and second-order derivatives of the ellipse $\mathcal{E}$ at the
current iterate. Then, the following theorem provides an equivalent formula of the 
ellipse (\ref{ellipse}).

\begin{theorem}\label{theorem:ellip}
\upshape{\cite{yang13}}
Suppose that the ellipse ${\cal E}$ of \textrm{(\ref{ellipse})} passes through 
the current iterate ${\v^k}$ at $\alpha=0$, and its first and second order derivatives at 
$\alpha=0$ are $\dot{\v}$ and $\ddot{\v}$, respectively. Then ${\cal E}$ is given
by ${\v}(\alpha) = ({\x}(\alpha), {\y}(\alpha), {\w}(\alpha), {\s}(\alpha), {\z}(\alpha))$ 
which can be calculated by
\begin{align}
{\v}(\alpha) = {\v^k} - \dot{\v}\sin(\alpha)+\ddot{\v}(1-\cos(\alpha)).
\label{vAlpha}
\end{align}
\label{arcFormula}
\end{theorem}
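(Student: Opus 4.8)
The plan is to determine the three vectors $\vec{\a}$, $\vec{\b}$, $\vec{\c}$ in the parametric form~(\ref{ellipse}) by imposing the three interpolation conditions at $\alpha=0$: the curve passes through $\v^k$, its first derivative is $\dot{\v}$, and its second derivative is $\ddot{\v}$. Since there are exactly three unknown vectors and three linear conditions, the system should be uniquely solvable, and substituting back into~(\ref{ellipse}) will yield~(\ref{vAlpha}).

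First I would differentiate the defining expression $\v(\alpha)=\vec{\a}\cos(\alpha)+\vec{\b}\sin(\alpha)+\vec{\c}$ twice with respect to $\alpha$, obtaining $\dot{\v}(\alpha) = -\vec{\a}\sin(\alpha)+\vec{\b}\cos(\alpha)$ and $\ddot{\v}(\alpha) = -\vec{\a}\cos(\alpha)-\vec{\b}\sin(\alpha)$. Evaluating at $\alpha=0$ gives the three equations $\vec{\a}+\vec{\c}=\v^k$, $\vec{\b}=\dot{\v}(0)$, and $-\vec{\a}=\ddot{\v}(0)$. Here I must reconcile a sign/parametrization convention: the statement of Theorem~\ref{arcFormula} uses $\dot{\v}$ and $\ddot{\v}$ for the prescribed derivative data, but the final formula~(\ref{vAlpha}) has $-\dot{\v}\sin(\alpha)$ and $+\ddot{\v}(1-\cos(\alpha))$, which matches taking $\vec{\b}=-\dot{\v}$ and $\vec{\a}=\ddot{\v}$, i.e.\ the convention in~\cite{yang13} is that $\dot{\v},\ddot{\v}$ are the solutions of the linear systems~(\ref{firstOrderM}) and~(\ref{secondOrder})/(\ref{secondOrderM}) rather than literally $\frac{d\v}{d\alpha}$ and $\frac{d^2\v}{d\alpha^2}$ evaluated at $0$; the sign bookkeeping is the one place where care is genuinely needed. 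With $\vec{\b}=-\dot{\v}$, $\vec{\a}=\ddot{\v}$, and hence $\vec{\c}=\v^k-\vec{\a}=\v^k-\ddot{\v}$, I would then substitute:
\begin{align*}
\v(\alpha) &= \ddot{\v}\cos(\alpha) - \dot{\v}\sin(\alpha) + (\v^k - \ddot{\v})
= \v^k - \dot{\v}\sin(\alpha) + \ddot{\v}(\cos(\alpha)-1)+\ldots
\end{align*}
wait---rearranging $\ddot{\v}\cos(\alpha)-\ddot{\v}$ as $-\ddot{\v}(1-\cos(\alpha))$ would give the wrong sign, so in fact the correct matching must be $\vec{\a}=-\ddot{\v}$, giving $\vec{\c}=\v^k+\ddot{\v}$ and $\v(\alpha)=-\ddot{\v}\cos(\alpha)-\dot{\v}\sin(\alpha)+\v^k+\ddot{\v}=\v^k-\dot{\v}\sin(\alpha)+\ddot{\v}(1-\cos(\alpha))$, exactly~(\ref{vAlpha}). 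This confirms the sign convention and shows the algebra closes.

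The remaining step is the converse direction for a complete ``if and only if'': I would verify that the curve~(\ref{vAlpha}) indeed has the form~(\ref{ellipse}) (it visibly does, with the $\vec{\a},\vec{\b},\vec{\c}$ identified above), passes through $\v^k$ at $\alpha=0$ (set $\sin 0=0$, $1-\cos 0=0$), and has the prescribed first and second ``derivative data'' at $\alpha=0$, computing $\frac{d}{d\alpha}\v(\alpha)\big|_{0}$ and $\frac{d^2}{d\alpha^2}\v(\alpha)\big|_{0}$ and checking they reproduce $\dot\v$ and $\ddot\v$ under the stated convention. I do not expect any genuine obstacle here---the result is essentially a two-line linear-algebra computation once the parametrization convention is pinned down---so the ``hard part'' is purely expository: stating clearly what $\dot\v$ and $\ddot\v$ mean (solutions of~(\ref{firstOrderM}) and~(\ref{secondOrder})/(\ref{secondOrderM}), which are the derivatives of the approximating arc, up to the sign induced by writing the ellipse with $\cos$ leading) so that the signs in~(\ref{vAlpha}) are not mysterious. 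Since the statement is quoted from~\cite{yang13}, I would keep the proof short, cite that reference, and simply present the determination of $\vec{\a},\vec{\b},\vec{\c}$ and the back-substitution.
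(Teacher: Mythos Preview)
Your proposal is correct and follows the natural route: identify $\vec{\a},\vec{\b},\vec{\c}$ from the three interpolation conditions at $\alpha=0$ and substitute back. The paper itself does not give a proof of this theorem---it simply states ``The proof of the theorem is given in \cite{yang13}'' and then explains the parametrization convention (the arc in $t$ runs from $t=1$ down to $t=0$, while the ellipse in $\alpha$ runs from $\alpha=0$ up to $\alpha=\pi/2$), which is exactly the source of the sign bookkeeping you flagged; your treatment of that point is accurate and is all that is needed.
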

The proof of the theorem is given in \cite{yang13}. It is worthwhile to indicate that
$\v(t)$ starts from $t=1$ and ends at $t=0$ for the arc represented by (\ref{KKTcurve}),
but $\v(\alpha)$ starts from $\alpha=0$ and ends at $\alpha=\frac{\pi}{2}$ for the
arc represented by (\ref{ellipse}), i.e., $\v(t)|_{t=1} = \v(\alpha)|_{\alpha=0}=\v^k$
is the current iterate. Since the ellipse approximates the arc defined by 
(\ref{KKTcurve}), we assume that $\dot{\v}(t)|_{t=1} = \dot{\v}(\alpha)|_{\alpha=0}$ 
and $\ddot{\v}(t)|_{t=1} = \ddot{\v}(\alpha)|_{\alpha=0}$. For the sake of simplicity,
we will use $\dot{\v}$ for $\dot{\v}(\alpha)|_{\alpha=0}$ and 
$\ddot{\v}$ for $\ddot{\v}(\alpha)|_{\alpha=0}$.

It is noteworthy that the direction of 
$\dot{\v}=(\dot{\x},\dot{\y},\dot{\w},\dot{\s},\dot{\z})$ calculated by using
(\ref{firstOrder}) is the same one (with a different sign) that is used 
by the steepest descent method for solving (\ref{KKT1}). 
To see this, for a sequence of $\v^k$, we use $o(t_k)$ for the claim that the
condition $\| \v_k \| \le t_k$ holds. At iteration $k$, steepest descent method 
calculates the next iterate $\v^{k+1} = \v^{k} + \alpha_k \d^k$ 
by using the search direction $\d^k$ and searching an appropriate step 
size $\alpha_k$. Using Taylor's expansion for (\ref{KKT1}),  we have 
\begin{eqnarray}
\k(\v^{k+1}) & = & \k(\v^{k}) + \k'(\v^{k}) (\v^{k+1}-\v^{k}) 
+ o(\| \v^{k+1}-\v^{k} \|^2)  \nonumber \\
& = & \k(\v^{k}) + \k'(\v^{k}) (\alpha_k \d^k) 
+ o(\| \alpha_k \d^k \|^2)  
\end{eqnarray}
For steepest descent method, $\d^k$ is calculated by setting 
$\k(\v^{k}) + \k'(\v^{k})\d^k = \0$ (the same system of equations of (\ref{firstOrder})
with a different sign), i.e., 
\begin{equation}
\d^k = -[\k'(\v^{k})]^{-1}\k(\v^{k})= -\dot{\v},
\label{direction}
\end{equation}
which is essentially the same search direction of (\ref{vAlpha}) when the second-order
term $\ddot{\v}$ is not used.
That is the reason why the first-order interior-point algorithm is not efficient,
and the second-order correction interior-point algorithm is used by some of
the latest interior-point algorithms, such as IPOPT \cite{wb05} and \cite{yiy22}.
It is emphasized that the second-order correction by using the arc-search is
fundamentally different from the one in \cite{wb05}.

\begin{remark}
It is worthwhile to note that the direction $\d^k$ of (\ref{direction}) involves the 
inverse of the Jacobian matrix and for this reason many researchers working on linear optimization 
refer to the method as Newton's method. But $\d^k$ is not a conventional Newton 
direction because of the definition of the vector $\k(\v^{k})$ which is not the 
gradient of an objective function (see \cite[Page 259]{nw06}).
\end{remark}

\begin{remark}
The analytic arc-search formula provided by (\ref{vAlpha}) is 
simple and its computational cost is negligible compared to other ones
such as solving linear systems of equations. In addition, for boundary
constraints, no arc-search is required because one may use analytic
formulas provided in Appendix 2 of \cite{yiy22} to calculate the step size.
It is emphasized that all the analyses in the rest of the paper are applicable to both 
of the following two cases: (a) $\dot{\v}$ is calculated by (\ref{firstOrderM}) and 
$\ddot{\v}$ is calculated by (\ref{secondOrder}), and (b) $\dot{\v}$ is calculated by 
(\ref{firstOrderM}) and $\ddot{\v}$ is calculated by (\ref{secondOrderM}).
\end{remark}

%To reduce the computational cost, we just need to compute either $\w^k$ or 
%$\z^k$ but not both. To see this, 
Note that the fourth row of (\ref{firstOrderM}) indicates that 
if ${\w^k}={\z^k}$, then $\dot{\w}-\dot{\z}={\w^k}-{\z^k} = \0$. 
Similarly, in view of the fourth row of (\ref{secondOrder}) or (\ref{secondOrderM}), we have 
$\ddot{\w}-\ddot{\z}={\w^k}-{\z^k} = \0$. In view of Theorem \ref{arcFormula}, 
we have the following result.

\begin{lemma}[\cite{yiy22}]
If ${\v^k}$ satisfies ${\w^k}={\z^k}$, then ${\w}(\alpha)={\z}(\alpha)$ 
holds for any $\alpha \in [0,\frac{\pi}{2}]$.
\label{eqwz}
\end{lemma}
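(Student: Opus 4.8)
The plan is to establish the conclusion by induction on the structure of the arc formula (\ref{vAlpha}), reducing the claim about $\w(\alpha)$ and $\z(\alpha)$ to claims about the first- and second-order derivative vectors $\dot{\v}$ and $\ddot{\v}$. First I would observe that Theorem~\ref{arcFormula} gives the explicit representation ${\v}(\alpha) = {\v^k} - \dot{\v}\sin(\alpha) + \ddot{\v}(1-\cos(\alpha))$, so that in particular
\[
\w(\alpha) - \z(\alpha) = (\w^k - \z^k) - (\dot{\w} - \dot{\z})\sin(\alpha) + (\ddot{\w} - \ddot{\z})(1 - \cos(\alpha)).
\]
Hence it suffices to show that under the hypothesis $\w^k = \z^k$ one has $\dot{\w} = \dot{\z}$ and $\ddot{\w} = \ddot{\z}$; then every term on the right-hand side vanishes identically in $\alpha$, giving $\w(\alpha) = \z(\alpha)$ for all $\alpha \in [0,\frac{\pi}{2}]$ (indeed for all $\alpha$).

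Next I would extract the two needed identities directly from the linear systems that define $\dot{\v}$ and $\ddot{\v}$. The fourth block row of (\ref{firstOrderM}) reads $\dot{\w} - \dot{\z} = \w^k - \z^k$, which under the hypothesis is $\0$, so $\dot{\w} = \dot{\z}$. Similarly, the fourth block row of (\ref{secondOrder}) (or equally of (\ref{secondOrderM}), since the two systems share this row) reads $\ddot{\w} - \ddot{\z} = \0$ on the right-hand side, hence $\ddot{\w} - \ddot{\z} = \0$, i.e. $\ddot{\w} = \ddot{\z}$. This is exactly the observation already recorded in the paragraph preceding the lemma statement, so the argument is essentially a matter of assembling these pieces.

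Substituting $\dot{\w} - \dot{\z} = \0$ and $\ddot{\w} - \ddot{\z} = \0$ into the displayed expression for $\w(\alpha) - \z(\alpha)$ collapses it to $(\w^k - \z^k)$, which is $\0$ by hypothesis, completing the proof. I do not anticipate a genuine obstacle here; the only point requiring a small amount of care is making sure the fourth block rows of the coefficient matrices in (\ref{firstOrderM}), (\ref{secondOrder}), and (\ref{secondOrderM}) are indeed identical (they are, being $[\0,\ \0,\ \I,\ \0,\ -\I]$ in all three systems), so that one genuinely reads off $\dot{\w}-\dot{\z}$ and $\ddot{\w}-\ddot{\z}$ from those rows rather than some coupled combination. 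Everything else is a direct substitution into the arc formula of Theorem~\ref{arcFormula}.
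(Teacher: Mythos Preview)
Your proposal is correct and follows essentially the same argument as the paper: the paragraph immediately preceding the lemma already records that the fourth block row of (\ref{firstOrderM}) gives $\dot{\w}-\dot{\z}=\w^k-\z^k=\0$ and the fourth block row of (\ref{secondOrder}) or (\ref{secondOrderM}) gives $\ddot{\w}-\ddot{\z}=\0$, after which Theorem~\ref{arcFormula} yields the conclusion. Your write-up simply makes this explicit by expanding $\w(\alpha)-\z(\alpha)$ via the arc formula, which is exactly the intended reasoning.
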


To search for the optimizer along the ellipse given by (\ref{vAlpha}), we expect that every
iterate will reduce the magnitude of $\k(\v^k)$ and eventually we will have 
$\k(\v^k) \rightarrow \0$ as $k \rightarrow 0$, thereby the KKT conditions 
will be satisfied. Therefore, we define the merit function by 
\begin{equation}
\phi(\v) = \| \k(\v) \|^2
\label{phiV}
\end{equation}
which should sufficiently decrease at ${\v}(\alpha)$ for 
some constants, $\rho \in (0, \frac{1}{2})$, %$1>\bar{\sigma} >0$,
$\sigma \in \left[0, 1 \right)$,  and step size $\alpha \in (0, \pi/2]$. 
The following lemma shows that this is achievable under the condition (\ref{cond1}).

\begin{lemma}
Let $\alpha \in (0, \frac{\pi}{2}]$, $\rho \in (0, \frac{1}{2})$ and 
$\sigma \in \left[0, 1 \right)$. Let $\mu = \frac{\z^{\T}\s}{p}$. If
\begin{equation}
\phi({\v}(\alpha)) \le \phi({\v}) + \rho \sin(\alpha)  \phi'({\v}(\alpha))|_{\alpha=0},
\label{cond1}
\end{equation}
then
\begin{equation}
\phi({\v}(\alpha)) \le \phi({\v}) (1 - 2\rho  (1-\sigma) \sin(\alpha))< \phi({\v}).
\label{conC1}
\end{equation}
\label{decrease}
\end{lemma}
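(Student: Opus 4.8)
The plan is to reduce the lemma to a single computation of the slope $\phi'({\v}(\alpha))|_{\alpha=0}$ of the merit function along the ellipse, and then feed the resulting bound directly into the hypothesis (\ref{cond1}).

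First I would differentiate $\phi({\v}(\alpha)) = \k({\v}(\alpha))^{\T}\k({\v}(\alpha))$ along the arc. By Theorem~\ref{arcFormula}, ${\v}(\alpha) = {\v} - \dot{\v}\sin(\alpha) + \ddot{\v}(1-\cos(\alpha))$, so $\frac{d}{d\alpha}{\v}(\alpha)|_{\alpha=0} = -\dot{\v}$, and the chain rule gives
\[
\phi'({\v}(\alpha))|_{\alpha=0} = -2\,\k({\v})^{\T}\bigl(\k'({\v})\dot{\v}\bigr).
\]
The key point to recognize is that the coefficient matrix on the left of (\ref{firstOrderM}) is \emph{exactly} the Jacobian $\k'({\v})$ of (\ref{firstJacobian}); hence $\k'({\v})\dot{\v}$ equals the right-hand side of (\ref{firstOrderM}), which by (\ref{KKT2}) is nothing but $\k({\v})$ with its last block $\Z\s$ replaced by $\Z\s - \sigma\mu\e$. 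Writing $\tilde{\e}=(\0,\0,\0,\0,\e)$ this reads $\k'({\v})\dot{\v} = \k({\v}) - \sigma\mu\,\tilde{\e}$, and since $\k({\v})^{\T}\tilde{\e} = (\Z\s)^{\T}\e = \z^{\T}\s = p\mu$, I arrive at the clean identity
\[
\phi'({\v}(\alpha))|_{\alpha=0} = -2\|\k({\v})\|^2 + 2\sigma p\mu^2 .
\]

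Next I would bound the centering term. The block structure of $\k({\v})$ gives $\|\k({\v})\|^2 \ge \|\Z\s\|^2 = \sum_{i=1}^p (z_i s_i)^2$, and Cauchy--Schwarz gives $\sum_{i=1}^p (z_i s_i)^2 \ge \frac{1}{p}(\z^{\T}\s)^2 = p\mu^2$; hence $2\sigma p\mu^2 \le 2\sigma\|\k({\v})\|^2 = 2\sigma\phi({\v})$, and therefore
\[
\phi'({\v}(\alpha))|_{\alpha=0} \le -2(1-\sigma)\phi({\v}) \le 0 .
\]
Finally I would assemble: assuming $\phi({\v})>0$ (the case $\phi({\v})=0$ being trivial), multiply the last display by $\rho\sin(\alpha)>0$ and substitute into (\ref{cond1}) to obtain $\phi({\v}(\alpha)) \le \phi({\v})\bigl(1 - 2\rho(1-\sigma)\sin(\alpha)\bigr)$, which is (\ref{conC1}); the factor lies strictly in $(0,1)$ because $\rho<\frac{1}{2}$, $1-\sigma\le 1$, and $\sin(\alpha)\le 1$ force $2\rho(1-\sigma)\sin(\alpha)<1$, yielding the strict decrease $\phi({\v}(\alpha))<\phi({\v})$. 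The main --- and essentially the only --- obstacle is the bookkeeping in the first step: correctly identifying the right-hand side of (\ref{firstOrderM}) with $\k({\v})$ up to the centering perturbation $-\sigma\mu\tilde{\e}$, and tracking the sign coming from $\frac{d}{d\alpha}{\v}(\alpha)|_{\alpha=0}=-\dot{\v}$. Once that identity is in hand, the remainder is a two-line estimate.
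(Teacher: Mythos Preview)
Your proof is correct and follows essentially the same route as the paper: compute $\phi'({\v}(\alpha))|_{\alpha=0}=-2\k({\v})^{\T}\k'({\v})\dot{\v}$ via the chain rule and the ellipse formula, identify $\k'({\v})\dot{\v}=\k({\v})-\sigma\mu\tilde{\e}$ from (\ref{firstOrderM}), bound the centering term $2\sigma p\mu^2\le 2\sigma\phi({\v})$ by Cauchy--Schwarz, and substitute into the hypothesis. Your intermediate constant $2\sigma p\mu^{2}$ is in fact the correct one (the paper's displayed $2\sigma\mu^{2}/p$ is a slip), but both reach the same final inequality $\phi'({\v}(\alpha))|_{\alpha=0}\le -2(1-\sigma)\phi({\v})$.
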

\begin{proof}
Given the ranges of $\alpha, \rho$, and $\sigma$, the last inequality in (\ref{conC1}) 
clearly holds. The first inequality in (\ref{conC1}) follows from a similar argument used in 
\cite{ettz96}. Since $\dot{\v}$ is defined as the solution of (\ref{firstOrderM})
$\k'({\v}) \dot{\v}=\k({\v})-\sigma {\mu} \bar{\e}$,
where $\bar{\e}=(\0,\e)$ with $\0 \in \R^{n+m+2p}$ and $\e \in \R^{p}$,
using the definition (\ref{phiV}) and then (\ref{vAlpha}), we have
\begin{eqnarray}
\left. \frac{d \phi({\v}(\alpha))}{d \alpha}\right|_{\alpha=0} 
& = & \left. 2\k({\v(\alpha)})^{\T}\k'({\v(\alpha)}) \frac{d {\v}(\alpha)}{d \alpha}\right|_{\alpha=0} 
\nonumber \\
& = &  \left. 2\k({\v(\alpha)})^{\T}\k'({\v(\alpha)}) \left[
-\dot{\v} \cos(\alpha) + \ddot{\v}\sin(\alpha) \right]\right|_{\alpha=0}
\nonumber \\
& = & -2 \k({\v})^{\T}\k'({\v})\dot{\v} 
\label{tmp00} \\
& = &  -2 \k({\v})^{\T} [\k({\v}) -\sigma {\mu} \bar{\e}]
\nonumber \\
& = & -2 \phi({\v}) +2 \sigma {\mu}^2/p,
\label{tmp0}
\end{eqnarray}
where the last equality is derived from  
$\k({\v})^{\T} (\sigma {\mu} \bar{\e}) = \sigma {\mu} 
\sum_{i=1}^{p} {z}_i {s}_i = \sigma {\mu} {\z}^{\T}{\s}
=\sigma {\mu}^2 /p$. 
Since $|{\z}^{\T}{\s}| \le \sqrt{p} \| \Z {\s} \|_2$ and $p \ge 1$, 
we have
\[
{\mu}^2/p =({\z}^{\T}{\s})^2 /p \cdot (1/p^2) 
\le \|\Z{\s} \|_2^2 \cdot 1 \le \| \k({\v}) \|_2^2 = \phi({\v}).
\]
Substituting this inequality into (\ref{tmp0}), we have 
\begin{equation}
\phi'({\v}(\alpha))|_{\alpha=0} \le -2 \phi({\v}) +2 \sigma \phi({\v})
\le -2\phi({\v}) (1-\sigma).
\label{objReduction}
\end{equation}
From (\ref{cond1}), it holds 
\[
\phi({\v}(\alpha)) \le \phi({\v}) - 2 \rho \sin(\alpha) \phi({\v}) (1-\sigma)
=\phi({\v}) (1 - 2\rho  (1-\sigma) \sin(\alpha) ).
\]
This completes the proof.
\hfill \qed
\end{proof}

\begin{remark}
If Condition (\ref{cond1}) and $\sigma<\phi(\v) p/\mu^2$ hold, then 
$\phi(\v(\alpha))$ decreases because $\alpha, \rho>0$ and according to 
(\ref{tmp0}), $\phi'({\v}(\alpha))|_{\alpha=0} <0$.
\label{newL}
\end{remark}

As a matter of fact, Condition (\ref{cond1}) holds for $\sigma=0$.
In view of (\ref{tmp00}) and (\ref{firstOrder}), since 
$\left. \frac{d \phi({\v}(\alpha))}{d \alpha}\right|_{\alpha=0}
=-2 \k({\v})^{\T}\k'({\v})\dot{\v}$, it follows
\begin{eqnarray}
\phi({\v}(\alpha))
 & =& \phi({\v(\alpha)}) + \phi'({\v}) \alpha + o(\alpha^2)
\nonumber \\
 & =& \phi({\v(\alpha)}) - 2 \k({\v})^{\T}\k'({\v})\dot{\v}  \alpha + o(\alpha^2)
\nonumber \\
 & =& \phi({\v}) - 2\phi({\v}) \alpha + o(\alpha^2),
\end{eqnarray}
the last equation holds because of $\sigma=0$.
Therefore, for $\sigma=0$ and $\alpha >0$ small enough, we have 
$\phi({\v}(\alpha))<\phi({\v})$.
We summarize this result as the following lemma.
\begin{lemma}
Assume that $\sigma = 0$, then searching along the
ellipse using (\ref{vAlpha}) will always reduce the merit function.
\label{decrease1}
\end{lemma}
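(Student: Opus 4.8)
The plan is to read the claim directly off the first-order behaviour of $\phi$ along the ellipse, exactly as in the computation leading to (\ref{objReduction}) but specialised to $\sigma=0$. First I would recall that $\dot{\v}$ is the solution of (\ref{firstOrderM}), which for $\sigma=0$ reduces to (\ref{firstOrder}), i.e. $\k'(\v)\dot{\v}=\k(\v)$; this presupposes that $\k'(\v)$ is invertible at the current iterate, which holds in a neighbourhood of $\v^*$ by Proposition~\ref{yang22} and is in any case a standing requirement whenever the step $\dot{\v}$ is computed.

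Next I would differentiate the merit function $\phi(\v(\alpha))=\|\k(\v(\alpha))\|^2$ along the arc (\ref{vAlpha}). Using the chain rule together with $\frac{d\v(\alpha)}{d\alpha}=-\dot{\v}\cos(\alpha)+\ddot{\v}\sin(\alpha)$, at $\alpha=0$ the $\ddot{\v}$ contribution drops out and one obtains, as in (\ref{tmp00}),
\[
\phi'(\v(\alpha))|_{\alpha=0}=-2\,\k(\v)^{\T}\k'(\v)\dot{\v}=-2\,\k(\v)^{\T}\k(\v)=-2\phi(\v),
\]
where the middle equality uses the $\sigma=0$ case $\k'(\v)\dot{\v}=\k(\v)$ of (\ref{firstOrderM}). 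Hence, unless $\v$ is already a KKT point (in which case $\phi(\v)=0$ and there is nothing to search for), we have $\phi'(\v(\alpha))|_{\alpha=0}=-2\phi(\v)<0$.

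Finally I would invoke the first-order Taylor expansion $\phi(\v(\alpha))=\phi(\v)+\phi'(\v(\alpha))|_{\alpha=0}\,\alpha+O(\alpha^2)=\phi(\v)(1-2\alpha)+O(\alpha^2)$, so that for all sufficiently small $\alpha>0$ the quadratic remainder is dominated by the strictly negative linear term, giving $\phi(\v(\alpha))<\phi(\v)$. This establishes the lemma. There is essentially no obstacle: the substance is entirely contained in the derivative computation (\ref{tmp00})--(\ref{tmp0}) already carried out in the text. The only points deserving a word of care are (i) the non-degenerate case $\phi(\v)\neq 0$, and (ii) that ``reduces the merit function'' is understood as the existence of a sufficiently small admissible step, not as an inequality valid for every $\alpha\in(0,\pi/2]$; obtaining the sharper Armijo-type bound (\ref{cond1}) over a definite range of $\alpha$ would additionally require controlling the $O(\alpha^2)$ term, but that is not needed for the statement as given.
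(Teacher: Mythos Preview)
Your proposal is correct and follows essentially the same route as the paper: compute $\phi'(\v(\alpha))|_{\alpha=0}=-2\k(\v)^{\T}\k'(\v)\dot{\v}=-2\phi(\v)$ using the $\sigma=0$ instance of (\ref{firstOrderM}), then Taylor-expand to conclude $\phi(\v(\alpha))<\phi(\v)$ for $\alpha>0$ small enough. Your added remarks on the non-degenerate case $\phi(\v)\neq 0$ and the meaning of ``reduces'' are sensible but do not deviate from the paper's argument.
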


\begin{remark}
As $\phi({\v}(\alpha)) \rightarrow 0$, we find a KKT point which may be a maximizer. In a
follow-on paper, we will provide some implementation tricks to avoid this case.
\end{remark}

For any interior-point method, every iterate must be an interior-point, i.e., we
must have $(\w,\s,\z)>0$ and $\g(\x) > \0$ in all the iterations before the algorithm 
stops. In addition, all iterates should improve the merit function. To make sure that 
the algorithm is well-posed, we need that all iterates are bounded. In summary, 
we should choose the step size $\alpha \in (0, \frac{\pi}{2}]$
such that the following conditions hold.
\begin{itemize}
\item[(C1)] $(\w^k(\alpha_k)),\s^k(\alpha_k),\z^k(\alpha_k)) > \0$.
\item[(C2)] $\g(\x^k(\alpha_k)) > \0$.
\item[(C3)] $\phi(\v^{k+1}) = \phi(\v^k(\alpha_k)) < \phi(\v^k)$.
\item[(C4)] The generated sequence $\{\v^k\}$ should be bounded. 
\end{itemize}

Condition (C1) can be achieved by a process discussed in \cite{yang17,yang18}. 
Because of Lemma~\ref{eqwz}, we always have $\z^k(\alpha)=\w^k(\alpha)$. 
Therefore, we do not need to calculate both $\w^k(\alpha)$ and $\z^k(\alpha)$.
For a fixed a small $\delta_1 \in (0,1)$, we will select the largest 
$\tilde{\alpha}_k$ such that all $\alpha \in [0, \tilde{\alpha}_k]$ satisfy 
\begin{equation}\label{positive}
\w^{k+1}=\w^k(\alpha) =\w^k - \dot{\w}^k\sin(\alpha)+\ddot{\w}^k(1-\cos(\alpha)) 
\ge \delta_1 \w^k,  
%\label{analyticArc}
\end{equation}
To this end, for each $i \in \lbrace 1,\ldots, p \rbrace$, we select the largest
$\alpha_{w_i}^k$ such that the $i$th inequality of (\ref{positive}) holds
for all $\alpha \in [0, \alpha_{w_i}^k ]$. Then, we define 
\begin{equation}
\tilde{\alpha}_k=\min_{i \in \lbrace 1,\ldots, p \rbrace}
\left\{ \min \left\{\alpha_{w_i}^k, \frac{\pi}{2} \right\} \right\}.
\label{alpha}
\end{equation}
The largest $\alpha_{w_i}^k$ can be computed in analytical forms 
(see the proof in \cite{yang17}), which are provided in Appendix 2 of \cite{yiy22}.
A lower bound estimation of $\tilde{\alpha}_k$ will be provided in Lemma~\ref{tildeAlpha}.
We may select $\s^{k+1}$ using the same process as we did for $\w^{k+1}$, 
but a warm start \cite{yw02} is better because it further reduces the merit function 
$\phi(\v^{k+1})=\phi(\v(\alpha))$. We will discuss this shortly. 

To meet Condition (C2), we search for $\bar{\alpha}_k \le \tilde{\alpha}_k$ 
for all $\alpha \in (0, \bar{\alpha}_k]$ and some $\delta_1 \in (0,1)$
such that $\g(\x^k(\alpha)) \ge \delta_1 \s^k > \0$. This can be achieved 
if $\g(\x^k) > \0$. Therefore, for all $\alpha \in (0, \bar{\alpha}_k]$, 
Condition (C2) holds.

To make sure that Condition (C3) holds, we search for 
$\check{\alpha}_k \in (0,\bar{\alpha}_k]$ such that condition (\ref{cond1}) holds, i.e., 
\begin{equation}
\check{\alpha}_k = \max\left\{ \alpha \in \left(0, \bar{\alpha}_k \right] : 
\phi({\v^k}(\alpha)) < \phi({\v^k}) - \delta_2 \right\}
\label{alphaCheck}
\end{equation}
where $\delta_2 \ge 0$ is a constant.
The existence of $\check{\alpha}_k$ is guaranteed due to Lemma~\ref{decrease1}. 
According to Remark~\ref{newL}, $\sigma_k <\phi(\v^k) p/\mu^2$ should be 
selected. A lower bound estimation for $\check{\alpha}_k$ will be provided in 
Lemma~\ref{checkAlpha}.

Finally, we will show that Condition (C4) holds under some mild conditions.
We will prove in Lemma \ref{boundedness} 
that $\{ \v^k \}$ is bounded, and $\{(\w^k, \s^k, \z^k)\}$ is bounded below 
and away from zero\footnote{A sequence $\{\c^k\} \subset \R^n $ is said to 
be \textit{bounded below and away from zero if there exists $\bar{c} > 0 $ 
such that every element of $\c^k$ satisfies $c_i^k \ge \bar{c}$
for all $k \ge 1$.}}. To this end, we would like to have $\v^k$ stay in
a desired set before our algorithms converge.
Let $\epsilon > 0$ be the convergence criterion
selected in the algorithms to be discussed, 
we define the desired set $\Omega(\epsilon)$ as follows:
\begin{equation}
\Omega (\epsilon) = \left\{ \v \in \R^{n+m+3p}: \epsilon \le \phi(\v) \le \phi(\v^0), 
\ \min (\Z\s) \ge \frac{1}{2} \frac{\Z^0\s^0}{\phi (\v^0)} \min (\phi (\v)) 
\right\}.
\label{epsilonSet}
\end{equation} 
To make sure that $\v^k$ stays in $\Omega(\epsilon)$, let
\begin{equation}
\hat{m}_k(\alpha)  = \min (\Z^k(\alpha) \s^k(\alpha) )- \frac{1}{2} 
\frac{\Z^0\s^0}{\phi (\v^0)} \min (\phi (\v(\alpha) )).
\label{measurePos}
\end{equation}
An $\hat{\alpha}_k$ is selected to satisfy $\hat{m}_k(\alpha_k) \ge 0$ for 
$\forall \alpha_k \in (0, \hat{\alpha}_k]$, i.e.,
\begin{equation}
\hat{\alpha}_k = \max\left\{ \alpha \in \left(0, \check{\alpha}_k \right] : \hat{m}_k(\alpha) \ge 0 \right\}.
\label{hatAlpha}
\end{equation}
This requirement has a similar effect as the wide neighborhood of interior-point 
methods \cite{wright97}. We will prove in Lemma \ref{AlphaH} that $\hat{\alpha}_k>0$ 
is bounded below and away from zero before our algorithms converge.

Summarizing the above process, we may select the step size in the $k$th iteration as:
\begin{equation}
\alpha_k = \min\{  \tilde{\alpha}, \check{\alpha}_k, \bar{\alpha}_k, \hat{\alpha}_k\}
=\hat{\alpha} > 0.
\label{alphaK}
\end{equation}
A reasonable update is $(\x^{k+1},\y^{k+1},\w^{k+1},\z^{k+1})
=(\x(\alpha_k),\y(\alpha_k),\w(\alpha_k),\z(\alpha_k))$.
As we mentioned above, our intuition and numerical experience made us believe that 
a warm start strategy will be more efficient because it further reduces the merit 
function in every iteration. Given $\x^{k+1}$, if we select 
$\s^{k+1}=\g(\x^{k+1})>0$, then, $\g(\x^{k+1})-\s^{k+1}= \0$, 
this reduces the third component of $\k(\v^{k+1})$ to zero and 
therefore reduces the merit function $\phi(\v^{k+1})$. We may also select 
$\y^{k+1}$ to reduce the first component of $\k(\v^{k+1})$ by minimizing
$\| \nabla f(\x^{k+1})-\nabla \h(\x^{k+1}) \y^{k+1}-\nabla \g(\x^{k+1}) \w^{k+1} \|$
because there is no restriction on $\y^{k+1}$ except that $\y^{k+1}$ is a real vector.
To achieve this, we can simply solve the following linear system of equations
\begin{equation}
\nabla \h(\x^{k+1}) \y^{k+1} = \nabla f(\x^{k+1})-\nabla \g(\x^{k+1}) \w^{k+1}
\label{ykp1}
\end{equation}
for $\y^{k+1}$.
%As we will see in Table \ref{HStest} that our algorithms using these warm start
%strategies plus other improvements solve some Hock and Schittkowski 
%problems \cite{hs81} which are unsolvable by the algorithm of \cite{yiy22}.

We will compare algorithms proposed in this paper to the algorithm of Yamashita et. al \cite{yiy22} 
which is given as follows:

\begin{algorithm}{\bf (The first infeasible arc-search interior-point algorithm \cite{yiy22})} \\
\begin{algorithmic}[1]
\STATE Parameters: $\epsilon>0$, $\delta_1>0$, $\delta_2>0$,  
$\rho \in (0, \frac{1}{2})$, $\bar{\sigma} \in (0, \frac{1}{2})$, and $\gamma_{-1} \in [0.5,1)$.
\STATE Initial point: $\v^0 = (\x^0,\y^0,\w^0,\s^0,\z^0)$ 
such that $(\w^0,\s^0,\z^0) > \0$, $\g(\x^0) >0$,  and $\w^0=\z^0$.
\FOR{  k=0,1,2,\ldots}
	\STATE  Calculate $\nabla_{\x} L(\v^k)$, $\h(\x^k)$, $\g(\x^k)$, 
     $\nabla_{\x} \h(\x^k)$, $\nabla_{\x} \g(\x^k)$. If $\phi(\v^k) \le \epsilon$, stop.
	\STATE Calculate 
     $\nabla_{\x}^2 \h(\x^k)$, $\nabla_{\x}^2 \g(\x^k)$, and $\nabla_{\x}^2 L(\v^k)$.
	\STATE Select $\sigma_k$ such that 
	$\bar{\sigma} \le \sigma_k <  \min \{ \frac{1}{2}, \phi(\v^k) p/\mu^2 \}$.
	\STATE Calculate $\dot{\v}^k$ by solving (\ref{firstOrderM}) at ${\v} = \v^k$.
	\STATE Calculate the righthand side of (\ref{secondOrder}).
	\STATE Calculate $\ddot{\v}^k$ by solving (\ref{secondOrder})  at ${\v} = \v^k$.
	\STATE Calculate $\tilde{\alpha}_k$ using (\ref{alpha}) and search $\bar{\alpha}_k \in (0, \tilde{\alpha}_k]$
     such that $\g(\x(\bar{\alpha}_k)) > \0$.
	\STATE Search $\check{\alpha}_k \in (0, \bar{\alpha}_k]$ such that (\ref{alphaCheck}) holds.
%	\STATE Determine $\check{\alpha}_k$ using (\ref{alphaCheck}).
	\STATE  Determine $\hat{\alpha}_k > 0$ using (\ref{measurePos}) and    
     (\ref{hatAlpha}).
     \STATE  Set $\alpha_k = \min \{ \hat{\alpha}_k, \check{\alpha}_k \}$.
	\STATE  Update $\v^{k+1} = \v^k(\alpha_k) = \v^k 
	- \dot{\v}^k \sin(\alpha_k) + \ddot{\v}^k (1-\cos(\alpha_k))$ .
	\STATE $k+1 \rightarrow k$.
\ENDFOR 
\end{algorithmic}
\label{mainAlgo0}
\end{algorithm}

Our first algorithm uses the accurate righthand side of (\ref{secondOrder}).

\begin{algorithm}{\bf (The second infeasible arc-search interior-point algorithm)} \\
\begin{algorithmic}[1]
\STATE Parameters: $\epsilon>0$, $\delta_1>0$, $\delta_2>0$,  
$\rho \in (0, \frac{1}{2})$, and $\bar{\sigma} \in [0, \frac{1}{2})$.
\STATE Initial point: $\v^0 = (\x^0,\y^0,\w^0,\s^0,\z^0)$ 
such that $(\w^0,\s^0,\z^0) > \0$, $\g(\x^0) >0$, and $\w^0=\z^0$.
\FOR{  k=0,1,2,\ldots}
	\STATE  Calculate $\nabla_{\x} L(\v^k)$, $\h(\x^k)$, $\g(\x^k)$, 
     $\nabla_{\x} \h(\x^k)$, $\nabla_{\x} \g(\x^k)$. If $\phi(\v^k) \le \epsilon$, stop.
	\STATE Calculate 
     $\nabla_{\x}^2 \h(\x^k)$, $\nabla_{\x}^2 \g(\x^k)$, and $\nabla_{\x}^2 L(\v^k)$.
	\STATE Select $\sigma_k$ such that 
	$\bar{\sigma} \le \sigma_k < \min \{ \frac{1}{8}, \phi(\v^k) p/\mu^2 \}$.
	\STATE Calculate  $\dot{\v}^k$ by solving (\ref{firstOrderM}) at ${\v} = \v^k$.
	\STATE Calculate the righthand side of (\ref{secondOrder}).
	\STATE Calculate $\ddot{\v}^k$ by solving (\ref{secondOrder})  at ${\v} = \v^k$.
	\STATE Calculate $\tilde{\alpha}_k$ using (\ref{alpha})  and search $\bar{\alpha}_k  \in (0, \tilde{\alpha}_k]$
     such that $\g(\x(\bar{\alpha}_k)) > \0$.
	\STATE Search $\check{\alpha}_k \in (0, \bar{\alpha}_k]$ such that (\ref{alphaCheck}) holds.
     \STATE  Determine $\hat{\alpha}_k > 0$ using (\ref{measurePos}) and    
     (\ref{hatAlpha}).
     \STATE  Set $\alpha_k = \min \{ \hat{\alpha}_k, \check{\alpha}_k \}$.
	\STATE  Update $(\x^{k+1},\w^{k+1}) =(\x^k,\w^k)
	- (\dot{\x}^k,\dot{\w}^k) \sin(\alpha_k) 
     + (\ddot{\x}^k,\ddot{\w}^k) (1-\cos(\alpha_k)).$
	\STATE  Select $\y^{k+1}$ by solving (\ref{ykp1}),
     and set $\s^{k+1} = \g(\x^{k+1}) >\0$ and $\z^{k+1}=\w^{k+1}$.
	\STATE $k+1 \rightarrow k$.
\ENDFOR 
\end{algorithmic}
\label{mainAlgo2}
\end{algorithm}

\begin{remark}
A major difference between Algorithms \ref{mainAlgo0} and \ref{mainAlgo2} 
is to use a warm start in Step 15 of Algorithm \ref{mainAlgo2}, where 
we select $\y^{k+1}$ to minimize
$\| \nabla f(\x^{k+1})-\nabla \h(\x^{k+1}) \y^{k+1}-\nabla \g(\x^{k+1}) \w^{k+1} \|$ 
and enforce equation $\g(\x^{k+1}) - \s^{k+1} = \0$ in every iteration $k$, 
which makes Algorithm \ref{mainAlgo2} converge faster and more robust.
\label{diffOldNew1}
%\end{remark}
%
%\begin{remark}
Finding an appropriate initial point satisfying $\g(\x^0) >0$ is an important step 
and was not discussed in \cite{yiy22}, but will be discussed in a follow-on paper.
\end{remark}

Our extensive experience shows (i) the most expensive cost in Algorithms \ref{mainAlgo0}
and \ref{mainAlgo2} is the computation of the third order derivatives on the right
hand side of (\ref{secondOrder}), and (ii) when the iterates are close
to the optimal solution, the magnitude of the third-order derivative is small.
Therefore, we may ignore the third-order derivative terms in Algorithm \ref{mainAlgo2}.
This leads to an alternative infeasible arc-search interior-point algorithm that calculates 
$\dot{\v}$ and $\ddot{\v}$ by (\ref{firstOrderM}) and (\ref{secondOrderM}).

\begin{algorithm}{\bf (The third infeasible arc-search interior-point algorithm)} \\
\begin{algorithmic}[1]
\STATE Parameters: $\epsilon>0$, $\delta_1>0$, $\delta_2>0$,  
$\rho \in (0, \frac{1}{2})$, and $\bar{\sigma} \in [0, \frac{1}{2})$.
\STATE Initial point: $\v^0 = (\x^0,\y^0,\w^0,\s^0,\z^0)$ 
such that $(\w^0,\s^0,\z^0) > \0$, $\g(\x^0) >0$,  and $\w^0=\z^0$.
\FOR{  k=0,1,2,\ldots}
	\STATE  Calculate $\nabla_{\x} L(\v^k)$, $\h(\x^k)$, $\g(\x^k)$, 
     $\nabla_{\x} \h(\x^k)$, $\nabla_{\x} \g(\x^k)$. If $\phi(\v^k) \le \epsilon$, stop.
	\STATE Calculate 
     $\nabla_{\x}^2 \h(\x^k)$, $\nabla_{\x}^2 \g(\x^k)$, and $\nabla_{\x}^2 L(\v^k)$.
	\STATE Select $\sigma_k$ such that 
	$\bar{\sigma} \le \sigma_k <  \min \{ \frac{1}{8}, \phi(\v^k) p/\mu^2 \}$.
	\STATE Calculate $\dot{\v}^k$ by solving (\ref{firstOrderM}) at ${\v} = \v^k$.
	\STATE Calculate the righthand side of (\ref{secondOrderM}).
	\STATE Calculate $\ddot{\v}^k$ by solving (\ref{secondOrderM})  at ${\v} = \v^k$.
%     \STATE If $\| \ddot{\v}^k \| >> \| \dot{\v}^k \|$, set $\ddot{\v}^k=\0$
	\STATE Calculate $\tilde{\alpha}_k$ using (\ref{alpha}) and search $\bar{\alpha}_k \in (0, \tilde{\alpha}_k]$
     such that $\g(\x(\bar{\alpha}_k)) > \0$.
	\STATE Search $\check{\alpha}_k \in (0, \bar{\alpha}_k]$ such that (\ref{alphaCheck}) holds.
     \STATE  Determine $\hat{\alpha}_k > 0$ using (\ref{measurePos}) and    
     (\ref{hatAlpha}).
     \STATE  Set $\alpha_k = \min \{ \hat{\alpha}_k, \check{\alpha}_k \}$.
     \STATE  Update $(\x^{k+1},\w^{k+1}) =  (\x^k,\w^k)
	- (\dot{\x}^k,\dot{\w}^k) \sin(\alpha_k) 
     + (\ddot{\x}^k,\ddot{\w}^k) (1-\cos(\alpha_k)).$
	\STATE  Select $\y^{k+1}$ by solving (\ref{ykp1}),
     and set $\s^{k+1} = \g(\x^{k+1}) >\0$ and $\z^{k+1}=\w^{k+1}$.
	\STATE $k+1 \rightarrow k$.
\ENDFOR 
\end{algorithmic}
\label{mainAlgo1}
\end{algorithm}

\begin{remark}
By avoiding the calculation of the third-order derivatives on the
right-hand side of (\ref{secondOrder}), 
the major cost in the computation of the right-hand side of 
(\ref{secondOrderM}) are Hessians and they have been calculated 
when we construct the matrix of (\ref{firstOrderM}). Since the matrix decomposition 
of (\ref{secondOrderM}) is obtained when we solve (\ref{firstOrderM}),
this decomposition can be reused when we solve (\ref{secondOrderM}). Therefore, the 
computational cost of the higher-order Algorithm \ref{mainAlgo1} 
in every iteration is comparable to many first-order interior-point Algorithms such as 
\cite{bhn99,bgn00,ettz96,nww09,plantenga98,twbul98,uuv04,vs99}. 
With a careful implementation, our experience shows that the convergence of the 
Algorithm \ref{mainAlgo1} is faster than Algorithm \ref{mainAlgo2}. All implementation
details for this algorithm will be discussed in a follow-on paper.
\label{diffOldNew}
\end{remark}

%In the next section, we will prove that all these algorithms are convergent
%under some mild conditions. In 
%Section \ref{sec:test}, we will compare the performance of these algorithms 
%by using numerical test.

\section{Convergence analysis}\label{sec:convergence}

Our convergence analysis is very general in the sense that it is applicable to all
three algorithms presented in the previous section. 
We will use the following assumptions like the ones used in \cite{ettz96,yiy22}.

\vspace{0.05in}
\noindent
{\bf Assumptions}
\begin{itemize}
%\item[(B1)] In the set $\Omega(\epsilon)$, the columns of $\nabla \h(\x)$ 
%are linearly independent.
\item[(B1)] The sequence $\{ \x^k \} \subset \R^n$ is bounded.
\item[(B2)]  For $\v \in \Omega(\epsilon)$, let $I_s^k$ be the index set 
$\{ i: 1 \le i \le p, \,\,  s^k_i =0 \}$. 
Then, the determinant of $(\J^k)^{\T} \J^k$ 
is bounded below and away from zero, where  $\J^k$ is a matrix 
whose column vectors are composed of
\[
\{ \nabla h_j (\x^k) : j = 1, \ldots, m\} \cup\{\nabla g_i(\x^k): i \in I_s^k  \}.
\]
\end{itemize}

\begin{remark}
Assumption (B2) is the standard LICQ condition \cite[page 328]{nw06}, which makes 
sure that the linear systems of equations (\ref{firstOrderM}), (\ref{secondOrder}), 
and (\ref{secondOrderM}) have solutions and helps us to establish
a strong convergence result, i.e., the algorithm converges in {\it finite steps}.
It is worthwhile to note that (B2) implies that in the set $\Omega(\epsilon)$, the 
columns of $\nabla \h(\x)$ are linearly independent.
\label{assumption0}
\end{remark}

The idea of the convergence analysis is similar to but not exactly the same as the one 
of \cite{yiy22}. We provide all proofs because it will be easy for
readers to check that the claims hold for all three algorithms. First, we establish 
the boundness results for series of functions and vectors.

\begin{lemma}
Assume that $\phi(\v^0)$ is bounded. Then
the series $\{ \phi(\v^k) \}$ created by any of the three Algorithms is bounded.
Therefore, the series $\{ \k(\v^k) \}$ is bounded. This implies that $\{ \nabla_{\x} L(\v^k) \}$,
$\{ \h(\x^k) \}$, $\{\g(\x^k)-\s^k  \}$, and $\{ \Z^k\s^k \}$ are all bounded.
\label{kvBound}
\end{lemma}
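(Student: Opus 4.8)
The plan is to chase the merit-function bound through the iteration. First I would observe that, by Lemma~\ref{decrease} (together with the selection $\sigma_k < \phi(\v^k)p/\mu^2$ and Remark~\ref{newL}), every accepted step satisfies $\phi(\v^{k+1}) = \phi(\v^k(\alpha_k)) < \phi(\v^k)$; indeed the step-size rule \eqref{alphaCheck} explicitly demands $\phi(\v^{k+1}) < \phi(\v^k) - \delta_2 \le \phi(\v^k)$. Hence the sequence $\{\phi(\v^k)\}$ is monotonically decreasing, so $\phi(\v^k) \le \phi(\v^0)$ for all $k$, and since $\{\phi(\v^0)\}$ is assumed bounded this gives a uniform bound on $\{\phi(\v^k)\}$. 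One subtlety worth a sentence: for Algorithms~\ref{mainAlgo2} and~\ref{mainAlgo1} the update in Step~14--15 is not exactly $\v^k(\alpha_k)$ because of the warm restart ($\s^{k+1} = \g(\x^{k+1})$, $\y^{k+1}$ solving \eqref{ykp1}, $\z^{k+1}=\w^{k+1}$); but as noted in the text preceding \eqref{ykp1}, each of these replacements can only decrease $\|\k(\v^{k+1})\|$ (it zeroes the third block $\g(\x)-\s$, does not change the fifth block since $\z^{k+1}\s^{k+1}=\w^{k+1}\s^{k+1}$ is unaffected in the relevant sense, and minimizes the first block over $\y$), so the monotonicity $\phi(\v^{k+1}) \le \phi(\v^k(\alpha_k)) < \phi(\v^k)$ still holds. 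This covers all three algorithms uniformly.

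Next, since $\phi(\v) = \|\k(\v)\|^2$ by definition \eqref{phiV}, boundedness of $\{\phi(\v^k)\}$ immediately yields boundedness of $\{\|\k(\v^k)\|\}$, i.e. of the sequence $\{\k(\v^k)\}$ in $\R^{n+m+3p}$. Finally, each of the five block components of $\k(\v^k)$ in \eqref{KKT2} has norm bounded by $\|\k(\v^k)\|$, so $\{\nabla_{\x}L(\v^k)\}$, $\{\h(\x^k)\}$, $\{\g(\x^k)-\s^k\}$, $\{\w^k-\z^k\}$, and $\{\Z^k\s^k\}$ are all bounded; in particular the four quantities named in the statement are bounded.

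There is essentially no hard step here — the lemma is a direct consequence of the merit-function decrease already established in Lemma~\ref{decrease} and Lemma~\ref{decrease1}. The only place that requires a little care is verifying that the warm-restart update in Algorithms~\ref{mainAlgo2} and~\ref{mainAlgo1} does not destroy the descent property $\phi(\v^{k+1}) < \phi(\v^k)$; this is handled by the component-wise argument above, which relies on the structural observations recorded after \eqref{KKT1} (that $\y$ is unconstrained and enters only the first block, and that $\s$ enters only the constraint $\g(\x)-\s=\0$). Everything else is unwinding definitions.
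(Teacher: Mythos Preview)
Your proposal is correct and follows essentially the same route as the paper's proof: monotone decrease of $\phi(\v^k)$ via Lemmas~\ref{decrease} and~\ref{decrease1}, then boundedness of $\k(\v^k)$ from the definition $\phi=\|\k\|^2$, then boundedness of each block from the definition of $\k$ in \eqref{KKT1}. Your explicit treatment of the warm-restart subtlety is in fact more careful than the paper's own three-line proof (which silently relies on the informal claim preceding \eqref{ykp1}); note, however, that your remark that the fifth block $\Z^{k+1}\s^{k+1}$ is ``unaffected'' is not literally true once $\s^{k+1}$ is reset to $\g(\x^{k+1})$, so that part of the justification leans on the same informal reasoning as the paper.
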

\begin{proof}
In view of Lemmas \ref{decrease} and \ref{decrease1}, it follows that
for all three Algorithms, $\{ \phi(\v^k)  \}$ is monotonically decreasing. 
This shows the first claim.
By the definition of $\{ \phi(\v^k) \}$, the second claim is true. The last
claim follows from the definition of $\k(\v^k)$ in (\ref{KKT1}).
\hfill \qed
\end{proof}

In the following few lemmas, we will show that $\{ \v^k \}$ is bounded.

\begin{lemma}
Assume that (B1) holds, then $\{ \s^k \} >0$ is bounded.
\label{sBound}
\end{lemma}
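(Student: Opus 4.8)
The plan is to combine the boundedness of $\{\x^k\}$ from (B1) with the earlier boundedness results to control $\{\s^k\}$. The key observation is that the third block of $\k(\v^k)$ is $\g(\x^k)-\s^k$, and Lemma~\ref{kvBound} already tells us this quantity is bounded, say $\|\g(\x^k)-\s^k\| \le M_1$ for all $k$. Hence $\s^k = \g(\x^k) - (\g(\x^k)-\s^k)$, so it suffices to bound $\{\g(\x^k)\}$.

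First I would invoke (B1): the sequence $\{\x^k\}$ lies in a bounded set, hence in a compact set after taking closure. By Assumption (A2) (smoothness), $\g$ is continuous, so $\g$ maps this compact set into a bounded set; thus there is $M_2$ with $\|\g(\x^k)\| \le M_2$ for all $k$. Combining with the previous paragraph, $\|\s^k\| \le \|\g(\x^k)\| + \|\g(\x^k)-\s^k\| \le M_2 + M_1$, which proves $\{\s^k\}$ is bounded. The positivity $\s^k > \0$ holds by construction: in Algorithm~\ref{mainAlgo0} the step-size selection enforces $\s^k(\alpha_k) > \0$ through a rule analogous to (\ref{positive}), and in Algorithms~\ref{mainAlgo2} and~\ref{mainAlgo1} we set $\s^{k+1} = \g(\x^{k+1})$ where $\g(\x^{k+1}) > \0$ is guaranteed by Condition (C2) (which is maintained because $\bar{\alpha}_k$ is chosen so that $\g(\x^k(\alpha)) \ge \delta_1 \s^k > \0$). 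So in all three algorithms $\s^k > \0$ for every $k$.

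I expect no serious obstacle here; this is essentially a routine consequence of (B1) plus Lemma~\ref{kvBound} plus continuity of $\g$. The only point requiring a little care is the case distinction among the three algorithms for the claim $\s^k > \0$: for Algorithm~\ref{mainAlgo0} one relies on the $\tilde\alpha_k$ construction keeping $\s^k(\alpha) \ge \delta_1 \s^k$, while for the other two it follows directly from the warm-start assignment $\s^{k+1} = \g(\x^{k+1})$ together with Condition (C2). I would state both briefly and move on.
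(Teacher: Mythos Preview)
Your proposal is correct and follows essentially the same argument as the paper: use (B1) and continuity of $\g$ to bound $\{\g(\x^k)\}$, invoke Lemma~\ref{kvBound} to bound $\{\g(\x^k)-\s^k\}$, and apply the triangle inequality $\|\s^k\|\le\|\g(\x^k)\|+\|\g(\x^k)-\s^k\|$. The paper handles the positivity of $\s^k$ in a single sentence (``the selection process of $\s^k$ guarantees that $\{\s^k\}>\0$ holds''), whereas you spell out the case distinction among the three algorithms, but this is only a difference in level of detail, not in approach.
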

\begin{proof}
From (B1) and the continuity of $\g$, the boundedness of $\{ \x^k \}$ 
implies that $\{ \g(\x^k) \}$ is bounded because $\g$ is 
differentiable and therefore continuous. In view of Lemma \ref{kvBound}, 
since $\| \s^k \| \le \| \g(\x^k) -\s^k \| + \| \g(\x^k) \|$, it follows that 
$\{\s^k\}$ is bounded. The selection process of $\s^k$ guarantees that 
$\{ \s^k \} >0$ holds.
\hfill \qed
\end{proof}

\begin{lemma}
Assume that (B1) and (B2) hold and $\{ \z^0 \}=\{ \w^0 \}>0$, then $\{ \z^k \}=\{ \w^k \}>0$ 
is bounded.
\label{zBound}
\end{lemma}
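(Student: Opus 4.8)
The plan is to derive the boundedness of $\{\w^k\}=\{\z^k\}$ from the already-established boundedness of $\{\nabla_{\x} L(\v^k)\}$ (Lemma~\ref{kvBound}) together with assumption (B2). Recall that $\nabla_{\x} L(\v^k) = \nabla f(\x^k) - \nabla \h(\x^k)\y^k - \nabla \g(\x^k)\w^k$, which is bounded in norm by some constant, say $C_L$. Under (B1) and the smoothness assumption (A2), the quantities $\nabla f(\x^k)$, $\nabla \h(\x^k)$, and $\nabla \g(\x^k)$ are all bounded on the bounded set containing $\{\x^k\}$; in particular $\|\nabla f(\x^k)\| \le C_f$ for some constant. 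So the vector $\nabla \h(\x^k)\y^k + \nabla \g(\x^k)\w^k = \nabla f(\x^k) - \nabla_{\x} L(\v^k)$ is bounded by $C_f + C_L$.

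First I would split the components of $\w^k$ according to whether the corresponding inequality constraint is ``active'' in the sense of the index set $I_s^k$ used in (B2). Because $\s^{k+1}=\g(\x^{k+1})$ is enforced in Algorithms~\ref{mainAlgo2} and~\ref{mainAlgo1}, an index $i$ with $s_i^k$ small corresponds to $g_i(\x^k)$ small. For indices $i \notin I_s^k$, the complementarity bound $\Z^k\s^k$ bounded (Lemma~\ref{kvBound}) forces $z_i^k s_i^k$ bounded, and since $s_i^k$ is bounded below away from zero for such indices (by definition of $I_s^k$ as the set where $s_i^k=0$, so indices outside it have $s_i^k$ positive — one needs a uniform lower bound here, which is where a little care is required), $z_i^k = w_i^k$ is bounded. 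For the remaining indices $i \in I_s^k$, I would write
\[
\nabla \h(\x^k)\y^k + \sum_{i \in I_s^k} w_i^k \nabla g_i(\x^k) = \nabla f(\x^k) - \nabla_{\x} L(\v^k) - \sum_{i \notin I_s^k} w_i^k \nabla g_i(\x^k),
\]
whose right-hand side is bounded by what we have just shown. The left-hand side is $\J^k \begin{bmatrix}\y^k \\ \w_{I_s^k}^k\end{bmatrix}$ in the notation of (B2), so multiplying by $(\J^k)^{\T}$ and inverting $(\J^k)^{\T}\J^k$ — which is legitimate and yields a uniformly bounded inverse precisely because (B2) says its determinant is bounded below away from zero while its entries are bounded above — gives a uniform bound on $\y^k$ and on $w_i^k$ for $i \in I_s^k$. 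Combining the two cases yields boundedness of $\{\w^k\}=\{\z^k\}$. The selection processes in the algorithms, via the $\delta_1$ safeguards in (\ref{positive}) and the warm-start $\z^{k+1}=\w^{k+1}$, guarantee $\{\z^k\}=\{\w^k\}>0$.

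The main obstacle I anticipate is the handling of the indices $i \notin I_s^k$: one needs that $s_i^k$ (equivalently $g_i(\x^k)$) is bounded \emph{below and away from zero} uniformly over those indices and over $k$, so that the bounded product $z_i^k s_i^k$ really does bound $z_i^k$. Strictly, $I_s^k$ as defined is the set where $s_i^k$ is exactly zero, so for $k$ fixed the complementary indices have $s_i^k>0$, but a uniform-in-$k$ gap is not immediate from that definition alone. I would resolve this either by appealing to the membership $\v^k \in \Omega(\epsilon)$, whose defining inequality $\min(\Z^k\s^k) \ge \frac{1}{2}\frac{\Z^0\s^0}{\phi(\v^0)}\min(\phi(\v^k)) \ge \frac{\epsilon}{2}\frac{\Z^0\s^0}{\phi(\v^0)} > 0$ forces every $z_i^k s_i^k$ bounded below, so that $s_i^k$ bounded above (Lemma~\ref{sBound}) immediately gives $z_i^k=w_i^k$ bounded below away from zero and, re-using the same inequality with the roles reversed, $s_i^k$ bounded below away from zero — making the index set $I_s^k$ effectively empty for all $k$ before convergence — or by absorbing the argument into a single application of (B2) with $I_s^k$ replaced by the set of indices where $s_i^k$ is below a fixed threshold. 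Either way the key quantitative input is the $\Omega(\epsilon)$ membership plus (B2), and the rest is the routine linear-algebra estimate sketched above.
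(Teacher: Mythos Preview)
Your approach is genuinely different from the paper's. The paper does not attempt a direct linear-algebra bound via $((\J^k)^{\T}\J^k)^{-1}$; instead it argues by contradiction and normalization. Assuming some subsequence has $w_i^k=z_i^k\to\infty$, it sets $(\hat{\y}^k,\hat{\w}^k)=(\y^k,\w^k)/\|(\y^k,\w^k)\|$, uses the boundedness of $\Z^k\s^k$ (Lemma~\ref{kvBound}) to force $s_i^k\to 0$ exactly on the blowing-up indices, and then divides the bounded relation $\nabla\h(\x^k)\y^k+\nabla\g(\x^k)\w^k$ by $\|(\y^k,\w^k)\|$ and passes to the limit to produce a nontrivial linear dependence among $\{\nabla h_j\}\cup\{\nabla g_i:i\in I_s\}$, contradicting (B2). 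The point is that the contradiction route lets the limit pick out the correct active set automatically; no uniform-in-$k$ lower bound on the inactive $s_i^k$ is ever needed.

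Your direct plan runs into precisely the obstacle you flagged, and neither of your proposed fixes closes it. Fix~\#1 both imports a hypothesis the lemma does not state ($\v^k\in\Omega(\epsilon)$ is assumed only from Lemma~\ref{yBound} onward) and is circular: from $z_i^ks_i^k\ge c>0$ together with $s_i^k$ bounded above you correctly get $z_i^k$ bounded \emph{below}, but ``re-using the same inequality with the roles reversed'' to conclude $s_i^k$ bounded below requires $z_i^k$ bounded \emph{above}, which is exactly the assertion you are trying to prove. And if $I_s^k$ is literally empty, (B2) reduces to full rank of $\nabla\h(\x^k)$, which controls $\y^k$ but not $\w^k$. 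Fix~\#2 (replacing $s_i^k=0$ by $s_i^k<\tau$ in the definition of $I_s^k$) can be made to work, but it amounts to strengthening the hypothesis (B2) rather than proving the lemma as written. The normalization-plus-contradiction argument is the cleaner route here.
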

\begin{proof}
In view of Lemma~\ref{eqwz}, we have $\w^k = \z^k$.
Since $\nabla f$ is continuous,  (B1) implies that $\{ \nabla f(\x^k) \}$ is bounded.
From  Lemma \ref{kvBound}, $\{\nabla_{\x} L(\v^k)\}$ is bounded. In view of (\ref{dlagrangian}),
it follows
\[
\| \nabla \h(\x^k) \y^k+\nabla \g(\x^k) \w^k \|
\le \| \nabla_{\x} L(\v^k) \| + \| \nabla f(\x^k) \|,
\]
therefore, $\{ \nabla \h(\x^k) \y^k+\nabla \g(\x^k) \w^k \}$ is bounded. 
Let $(\hat{\y}, \hat{\w} )=({\y}, {\w}) / \| ({\y}, {\w}) \|$.
Suppose by contradiction that  $z_i^k  =w_i^k  \rightarrow \infty$ 
as $k \to \infty$ for some $i$ and we denote this set as $I_s$. For
$z_j^k \notin I_s$, $z_j^k  =w_j^k  < \infty$ as $k \to \infty$. 
Since $\| (\hat{\y}, \hat{\w} ) \|=1$ and $\| ({\y}, {\w}) \| \rightarrow \infty$, 
$\hat{w}_i \neq 0$ if $i \in I_s$ and $\hat{w}_j = 0$ if $j \notin I_s$.
Note that $\{ \Z^k\s^k \} = \{ \W^k\s^k \}$ is bounded,
if $w_i^k \rightarrow \infty$, then $s_i^k \rightarrow 0$, this shows that 
$I_s$ is the same set as defined in (B2).
Since $\{ \nabla \h(\x^k) \y^k+\nabla \g(\x^k) \w^k \}$ is bounded
and $\hat{w}_j = 0$ for $j \notin I_s$, it follows 
\begin{equation}
\nabla \h(\x^k) \hat{\y}^k+\nabla \g(\x^k) \hat{\w}^k 
=\nabla \h(\x^k) \hat{\y} + \sum_{i \in I_s} \nabla g_i(\x^k) \hat{w}_i 
\rightarrow 0.
\label{contradict}
\end{equation}
This contradicts to the assumption of (B2). Therefore, $\{ \z^k \}=\{ \w^k \}$ 
is bounded. The selection process of $(\w^k, \z^k)$ gurantees that 
$(\w^k, \z^k) >0$ holds.
\hfill \qed
\end{proof}

\begin{lemma}
Assume that (B1) and (B2) hold, and $\{\v^k\} \subset \Omega(\epsilon)$, then  
$\{ s_i^k \}$ and $\{ z_i^k \}$ is bounded below from zero, and $\{ \y^k \}$ 
is bounded.
\label{yBound}
\end{lemma}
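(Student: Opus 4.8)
\textbf{Proof proposal for Lemma~\ref{yBound}.}
The plan is to combine the boundedness facts already established in Lemmas~\ref{kvBound}--\ref{zBound} with the defining inequality of the set $\Omega(\epsilon)$ in (\ref{epsilonSet}). First I would obtain the lower bounds on $\{s_i^k\}$ and $\{z_i^k\}$. Since $\{\v^k\} \subset \Omega(\epsilon)$, we have $\min(\Z^k\s^k) \ge \frac{1}{2}\frac{\Z^0\s^0}{\phi(\v^0)}\min(\phi(\v^k)) \ge \frac{1}{2}\frac{\Z^0\s^0}{\phi(\v^0)}\,\epsilon =: \bar{c} > 0$, where positivity uses $(\w^0,\s^0,\z^0)>\0$ and $\epsilon > 0$. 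Hence $s_i^k z_i^k \ge \bar{c}$ for every $i$ and every $k$. Now Lemma~\ref{sBound} gives an upper bound $s_i^k \le S$ for all $i,k$, and Lemma~\ref{zBound} gives $z_i^k \le Z$ for all $i,k$. Therefore $s_i^k \ge \bar{c}/z_i^k \ge \bar{c}/Z > 0$ and $z_i^k \ge \bar{c}/s_i^k \ge \bar{c}/S > 0$, which shows both sequences are bounded below away from zero (and, by Lemmas~\ref{sBound} and~\ref{zBound}, also bounded above).

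Next I would bound $\{\y^k\}$ using assumption (B2) and the warm-start/KKT structure. From Lemma~\ref{kvBound}, $\{\nabla_{\x}L(\v^k)\}$ is bounded, and from (B1) plus continuity of $\nabla f$ and the Jacobians, $\{\nabla f(\x^k)\}$, $\{\nabla\h(\x^k)\}$, $\{\nabla\g(\x^k)\}$ are bounded; combined with the boundedness of $\{\w^k\}$ from Lemma~\ref{zBound}, relation (\ref{dlagrangian}) yields that $\{\nabla\h(\x^k)\y^k\}$ is bounded, say $\|\nabla\h(\x^k)\y^k\| \le M$ for all $k$. It then suffices to show $\nabla\h(\x^k)$ has smallest singular value bounded below away from zero, so that $\|\y^k\| \le M / \sigma_{\min}(\nabla\h(\x^k))$ stays bounded. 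This is exactly what (B2) provides: the matrix $\J^k$ there contains all columns $\nabla h_j(\x^k)$, $j=1,\dots,m$, so $(\nabla\h(\x^k))^{\T}\nabla\h(\x^k)$ is a principal submatrix of $(\J^k)^{\T}\J^k$; since $\det((\J^k)^{\T}\J^k)$ is bounded below away from zero and the entries are bounded (by (B1) and continuity), the eigenvalues of $(\J^k)^{\T}\J^k$ — and hence of its principal submatrix $(\nabla\h(\x^k))^{\T}\nabla\h(\x^k)$ — are bounded below away from zero. Equivalently, one solves the least-squares problem (\ref{ykp1}) defining $\y^{k+1}$ and uses the same uniform full-rank bound on $\nabla\h(\x^{k+1})$ together with boundedness of its right-hand side.

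The main obstacle I anticipate is the passage from "$\det((\J^k)^{\T}\J^k)$ bounded below away from zero" to "$\sigma_{\min}(\nabla\h(\x^k))$ bounded below away from zero," because $\J^k$ is a larger matrix than $\nabla\h(\x^k)$ and its column set $I_s^k$ can vary with $k$. The clean way around this is to note that $(\J^k)^{\T}\J^k$ is a positive definite matrix whose trace (hence largest eigenvalue) is uniformly bounded above by (B1) and continuity, while its determinant is uniformly bounded below; for a fixed matrix size these two facts force all eigenvalues to lie in a fixed interval $[\lambda_{\min},\lambda_{\max}] \subset (0,\infty)$ independent of $k$, and by the interlacing/eigenvalue-monotonicity property for principal submatrices the same lower bound $\lambda_{\min}$ controls $(\nabla\h(\x^k))^{\T}\nabla\h(\x^k)$. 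A minor care point is that the index set $I_s^k$ and hence the size of $\J^k$ may change with $k$, but since there are only finitely many possible subsets of $\{1,\dots,p\}$ one may pass to subsequences, or simply observe that $\nabla\h(\x^k)$'s columns are present in every $\J^k$ regardless of $I_s^k$, so the argument goes through uniformly. This completes the bound on $\{\y^k\}$ and hence the lemma.
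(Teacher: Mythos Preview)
Your proposal is correct and follows essentially the same route as the paper: the lower bounds on $\{s_i^k\}$ and $\{z_i^k\}$ come from the product bound in $\Omega(\epsilon)$ together with the upper bounds of Lemmas~\ref{sBound}--\ref{zBound}, and the bound on $\{\y^k\}$ comes from solving (\ref{dlagrangian}) for $\y^k$ via the pseudoinverse $((\nabla\h(\x^k))^{\T}\nabla\h(\x^k))^{-1}(\nabla\h(\x^k))^{\T}$, using (B2) to control that inverse uniformly. If anything, you are more careful than the paper, which simply asserts in item (d) that (B2) implies $((\nabla\h(\x^k))^{\T}\nabla\h(\x^k))^{-1}$ is bounded, whereas you supply the determinant--trace--interlacing argument that actually justifies this.
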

\begin{proof}
Since $\{\v^k\} \subset \Omega(\epsilon)$, the sequence $\{ z_i^k s_i^k \}$
is all bounded below and away from zero for each $i=1, \ldots, p$;
more precisely, $z_i^k s_i^k \ge  \frac{1}{2} \frac{z^0_i s_i^0}{\phi (\v^0)} 
\phi (\v^k) \ge \frac{1}{2}
\frac{z_i^0 s_i^0}{\phi(\v^0)}\epsilon$ for each $i$. Since $\{ s_i^k \}$ 
is bounded, this shows $\{ z_i^k \}$ is bounded below and away from zero. 
Similarly, $\{ s_i^k \}$ is also bounded below and away from zero.
In view of Remark \ref{assumption0}, (B2) implies $\nabla \h(\x^k)$ is 
linearly independent. From (\ref{dlagrangian}), it follows
\[
\y^k = -((\nabla \h(\x^k))^{\T}\nabla \h(\x^k))^{-1}(\nabla \h(\x^k))^{\T}
\left[ \nabla_{\x} L(\v^k) - \nabla f(\x^k)+ \nabla \g(\x^k) \w^k
\right],
\]
hence, $\{\y^k\}$ is bounded because of the following claims: 
(a) $\{ \w^k\}$ is bounded, (b) (B1) implies $\{ \nabla f(\x^k) \}$ 
and $\{ \nabla \g(\x^k) \}$ are bounded, 
(c) Lemma \ref{kvBound} indicates $\{ \nabla \h(\x^k) \}$ and 
$\{ \nabla_{\x} L(\v^k) \}$ are bounded, and (d) (B2) implies
$((\nabla \h(\x^k))^{\T}\nabla \h(\x^k))^{-1}$ exists and is bounded.
\hfill \qed
\end{proof}

Summarizing Lemmas \ref{kvBound}, \ref{sBound}, \ref{zBound}, and \ref{yBound}, 
we have the following Lemma.

\begin{lemma}\label{boundAboveBelow}
Assume that (B1) and (B2) hold, and $\{ \phi(\v^0) \}$ is bounded. 
For some fixed $\epsilon > 0$, if the sequence 
$\{\v^k\}$ satisfies  $\{\v^k\} \subset \Omega(\epsilon)$, then  $\{ \v^k \}$ 
is bounded and $\{(\w^k, \s^k, \z^k)\} > \0$
is bounded below and away from zero.
\label{boundedness}
\end{lemma}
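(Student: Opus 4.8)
The plan is to assemble Lemma~\ref{boundedness} as a direct corollary of the four preceding lemmas, checking that their hypotheses are exactly the ones assumed here and that together they cover every component of $\v^k = (\x^k,\y^k,\w^k,\s^k,\z^k)$. First I would observe that the hypothesis ``$\{\phi(\v^0)\}$ is bounded'' feeds Lemma~\ref{kvBound}, which gives boundedness of $\{\phi(\v^k)\}$ and hence of $\{\k(\v^k)\}$ and its five block components. Next, hypothesis (B1) directly asserts $\{\x^k\}$ is bounded, which disposes of the first component. Then I would invoke Lemma~\ref{sBound} (needs only (B1)) to get $\{\s^k\}>\0$ bounded, Lemma~\ref{zBound} (needs (B1), (B2), and $\{\z^0\}=\{\w^0\}>0$, the latter being part of the initialization of all three algorithms) to get $\{\z^k\}=\{\w^k\}>0$ bounded, and Lemma~\ref{yBound} (needs (B1), (B2), and $\{\v^k\}\subset\Omega(\epsilon)$) to get $\{\y^k\}$ bounded together with $\{s_i^k\}$ and $\{z_i^k\}$ bounded below and away from zero.

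Combining these, every one of the five blocks of $\v^k$ lies in a bounded set, so $\{\v^k\}$ is bounded; and the ``bounded below and away from zero'' conclusion for $\{(\w^k,\s^k,\z^k)\}$ is precisely the last conclusion of Lemma~\ref{yBound} (using $\w^k=\z^k$ from Lemma~\ref{eqwz} so the statement about $\{z_i^k\}$ also covers $\{w_i^k\}$). I would write the proof as a short paragraph that simply chains ``By Lemma~\ref{kvBound}, \dots; by (B1) and Lemma~\ref{sBound}, \dots; by Lemma~\ref{zBound}, \dots; by Lemma~\ref{yBound}, \dots; hence $\{\v^k\}$ is bounded and $\{(\w^k,\s^k,\z^k)\}$ is bounded below and away from zero,'' followed by \hfill\qed.

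There is essentially no technical obstacle here since all the work has been done in the earlier lemmas; the only thing to be careful about is bookkeeping — making sure that the hypothesis $\{\z^0\}=\{\w^0\}>0$ needed by Lemma~\ref{zBound} is legitimately available (it is, as part of the standing assumption on the initial point for each algorithm, together with $\w^0=\z^0$), and that the condition $\{\v^k\}\subset\Omega(\epsilon)$ needed by Lemma~\ref{yBound} is exactly the hypothesis of the present lemma. I would also note explicitly that the ``bounded below and away from zero'' claim about $(\w^k,\s^k,\z^k)$ follows component-wise from the $z_i^k,s_i^k$ lower bounds in Lemma~\ref{yBound} plus $\w^k=\z^k$, so that no new argument involving $\Omega(\epsilon)$'s defining inequality $\min(\Z\s)\ge \tfrac12\tfrac{\Z^0\s^0}{\phi(\v^0)}\min(\phi(\v))$ needs to be repeated.
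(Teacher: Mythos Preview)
Your proposal is correct and matches the paper's approach exactly: the paper presents Lemma~\ref{boundedness} simply as a summary of Lemmas~\ref{kvBound}, \ref{sBound}, \ref{zBound}, and \ref{yBound}, with no additional argument given. Your bookkeeping about which hypotheses feed which lemma, and the use of $\w^k=\z^k$ to transfer the lower bound from $\z^k$ to $\w^k$, is precisely the intended chain.
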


Since $\S^k$ and $\Z^k$ are bounded below and away from zero, we would like
introduce two more assumptions.

\vspace{0.05in}
\noindent
{\bf Assumptions}
\begin{itemize}
\item[(B3)] The matrix $\nabla_{\x}^2 L(\v)+\nabla \g(\x) \S^{-1} \Z (\nabla \g(\x))^{\T}$ 
is invertible for any $\v$ in $\Omega(\epsilon)$,
and  the set $\{ (\nabla_{\x}^2 L(\v)+\nabla \g(\x) \S^{-1} \Z (\nabla \g(\x))^{\T})^{-1} :
\v \in \Omega(\epsilon)\}$ is bounded.
\item[(B4)]  For each k, the function $\{ \phi'(\x^k(\alpha)) \}$ is Lipschitz continuous, i.e.,
$\phi'(\x^k(\alpha)) -\phi'(\x^k) \le L \alpha$ for a constant $L$.
\end{itemize}

\begin{remark}
Assumption (B3) is necessary and explains why an extensively studied problem \cite{wb00} is not globally
convergent (see problem (\ref{LD1})).
\end{remark}

The following lemma will be used to show the boundedness 
of the inverse of the Jacobian $\{ \k'(\v^k)\}$.

\begin{lemma}[\cite{ls02}]\label{blockInverse}%\upshape{\cite{ls02}}
Let ${\bf R}$ be a block matrix
\[
{\bf R}=\left[ \begin{array}{cc}
\A & \B \\ \C & \D  \end{array}   \right].
\]
If $\A$ and $\D-\C\A^{-1}\B$ are invertible,
or $\D$ and $\A-\B\D^{-1}\C$ are invertible, then ${\bf R}$ is invertible.
\end{lemma}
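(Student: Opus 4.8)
The plan is to prove the lemma by exhibiting ${\bf R}$ as a product of matrices each of which is patently invertible; since a product of invertible square matrices is invertible, this settles the claim. The two listed hypotheses correspond to the two standard block factorizations of ${\bf R}$ (the one ``pivoting on $\A$'' and the one ``pivoting on $\D$''), so I would simply treat them in turn.

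First I would treat the case in which $\A$ and the Schur complement ${\bf S}=\D-\C\A^{-1}\B$ are invertible. Here I would write down, and check by block multiplication, the identity
\[
{\bf R}=\left[ \begin{array}{cc} \I & \0 \\ \C\A^{-1} & \I \end{array} \right]
\left[ \begin{array}{cc} \A & \0 \\ \0 & \D-\C\A^{-1}\B \end{array} \right]
\left[ \begin{array}{cc} \I & \A^{-1}\B \\ \0 & \I \end{array} \right].
\]
The two outer factors are block-triangular with identity diagonal blocks, hence invertible — the inverse of each is obtained simply by negating its off-diagonal block, as one verifies in one line of block arithmetic — and the middle factor is block-diagonal, with inverse $\mathrm{diag}(\A^{-1},(\D-\C\A^{-1}\B)^{-1})$, because both $\A$ and $\D-\C\A^{-1}\B$ are invertible. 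Thus ${\bf R}$ is a product of three invertible matrices, so ${\bf R}$ is invertible.

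For the case in which $\D$ and $\A-\B\D^{-1}\C$ are invertible I would use the mirror-image factorization
\[
{\bf R}=\left[ \begin{array}{cc} \I & \B\D^{-1} \\ \0 & \I \end{array} \right]
\left[ \begin{array}{cc} \A-\B\D^{-1}\C & \0 \\ \0 & \D \end{array} \right]
\left[ \begin{array}{cc} \I & \0 \\ \D^{-1}\C & \I \end{array} \right]
\]
and conclude in exactly the same way; alternatively, this case reduces to the first by swapping the two block rows and the two block columns of ${\bf R}$ (a permutation, hence invertibility-preserving), which turns the hypothesis ``$\D$ and $\A-\B\D^{-1}\C$ invertible'' into ``$\A$ and $\D-\C\A^{-1}\B$ invertible'' for the permuted matrix.

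I do not anticipate any genuine obstacle: the result is classical, and the whole substance is the verification of the two displayed factorizations, which is routine block-matrix arithmetic. The only points deserving a moment of care are conformability — the off-diagonal blocks $\B$ and $\C$ need not be square, and the products above make sense precisely because $\A$ and $\D$ are square — and the fact that, ${\bf R}$ being square, one could equally finish by a kernel argument: if $(\xi_1,\xi_2)$ lies in the kernel of ${\bf R}$ then $\A\xi_1+\B\xi_2=\0$ gives $\xi_1=-\A^{-1}\B\xi_2$, and substitution into $\C\xi_1+\D\xi_2=\0$ yields $(\D-\C\A^{-1}\B)\xi_2=\0$, whence $\xi_2=\0$ and then $\xi_1=\0$, so ${\bf R}$ has trivial kernel and is invertible.
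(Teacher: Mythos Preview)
Your proof is correct. The paper does not actually supply a proof of this lemma; it is quoted from \cite{ls02} and used as a tool in the subsequent Lemma~\ref{lem:inverseFprime}. Your argument via the block-LDU factorization (and the alternative kernel argument) is exactly the standard proof one finds in that reference, so there is no divergence to discuss.
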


The next lemma is given in \cite{yiy22}, we provide the proof here so that the readers 
can check that the result is applicable to all three algorithms listed in the previous section.

\begin{lemma}[\cite{yiy22}]\label{lem:inverseFprime}
Assume that (B1)-(B3) hold. If $\{\v^k\} \subset \Omega(\epsilon)$ for a fixed $\epsilon > 0$, 
then $\{ [\k'(\v^k)]^{-1} \}$ is bounded.
\label{invertibility}
\label{FpInv}
\end{lemma}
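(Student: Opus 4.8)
The plan is to show that $\k'(\v^k)$ is invertible with a bounded inverse by eliminating variables blockwise until we reach the reduced matrix $\nabla_{\x}^2 L(\v^k)+\nabla \g(\x^k) (\S^k)^{-1} \Z^k (\nabla \g(\x^k))^{\T}$ that (B3) controls, and then applying Lemma~\ref{blockInverse} repeatedly to lift invertibility (and boundedness of the inverse) back up to the full system. First I would use the last block row of (\ref{firstJacobian}), namely $\Z^k \ddot{}$-type equation $\Z^k \delta\s + \S^k \delta\z = r_5$, to solve $\delta\z = (\S^k)^{-1}(r_5 - \Z^k \delta\s)$; here the key point is that by Lemma~\ref{boundedness} the diagonal matrices $\S^k,\Z^k$ are bounded and bounded away from zero, so $(\S^k)^{-1}$ and $(\Z^k)^{-1}$ exist and are bounded. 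Next I would use the fourth block row $\delta\w - \delta\z = r_4$ to eliminate $\delta\z$ (or equivalently $\delta\w$), and then the third block row $(\nabla\g(\x^k))^{\T}\delta\x - \delta\s = r_3$ to eliminate $\delta\s$. Substituting these into the first block row produces the Schur complement $\nabla_{\x}^2 L(\v^k) + \nabla\g(\x^k)(\S^k)^{-1}\Z^k(\nabla\g(\x^k))^{\T}$ acting on $\delta\x$, together with the remaining coupling $-\nabla\h(\x^k)\delta\y$.

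The second stage is to handle the $\delta\x,\delta\y$ system
\[
\begin{bmatrix} \Q^k & -\nabla\h(\x^k) \\ (\nabla\h(\x^k))^{\T} & \0 \end{bmatrix}
\begin{bmatrix} \delta\x \\ \delta\y \end{bmatrix} = \begin{bmatrix} \tilde r_1 \\ r_2 \end{bmatrix},
\]
where $\Q^k := \nabla_{\x}^2 L(\v^k) + \nabla\g(\x^k)(\S^k)^{-1}\Z^k(\nabla\g(\x^k))^{\T}$. By (B3), $\Q^k$ is invertible with bounded inverse on $\Omega(\epsilon)$. Applying Lemma~\ref{blockInverse} with $\A=\Q^k$, the relevant Schur complement is $-(\nabla\h(\x^k))^{\T}(\Q^k)^{-1}\nabla\h(\x^k)$; this is invertible because (B2) (via Remark~\ref{assumption0}) guarantees $\nabla\h(\x^k)$ has full column rank on $\Omega(\epsilon)$, and its inverse is bounded since $(\Q^k)^{-1}$ is bounded and $(\nabla\h(\x^k))^{\T}\nabla\h(\x^k)$ is bounded away from singularity by (B2). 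Hence the $2\times 2$ block system is invertible with bounded inverse, and unwinding the eliminations (each of which involved only multiplication by bounded matrices: $(\S^k)^{-1}$, $(\Z^k)^{-1}$, $\nabla\g(\x^k)$, which is bounded by (B1) and continuity) shows $[\k'(\v^k)]^{-1}$ exists and its norm is bounded by a constant depending only on the uniform bounds furnished by Lemmas~\ref{kvBound}--\ref{boundedness} and Assumptions (B1)--(B3).

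The main obstacle, I expect, is bookkeeping rather than conceptual: one must carry out the block elimination carefully enough to verify that \emph{every} intermediate block and every Schur complement has a uniformly (in $k$) bounded inverse, not merely an inverse for each fixed $k$. This requires being explicit that all the matrices appearing as factors — $\nabla\h(\x^k)$, $\nabla\g(\x^k)$, $\nabla_{\x}^2 L(\v^k)$, $(\S^k)^{-1}$, $(\Z^k)^{-1}$ — lie in bounded sets uniformly over $\Omega(\epsilon)$, for which one invokes (B1) plus continuity of the (finitely many, smooth) derivative maps on the bounded set $\{\x^k\}$, together with the bounded-away-from-zero conclusion of Lemma~\ref{boundedness} for $\S^k,\Z^k$, and then tracks how these bounds propagate through the two applications of Lemma~\ref{blockInverse}. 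A secondary subtlety is confirming that the Schur complement structure of $\k'(\v^k)$ genuinely reduces to the matrix named in (B3); this is a direct computation but should be displayed so the reader can check it holds verbatim for all three algorithms (the Jacobian $\k'(\v^k)$ in (\ref{firstJacobian}) is identical across Algorithms~\ref{mainAlgo0}, \ref{mainAlgo2}, and \ref{mainAlgo1}, since only the right-hand sides differ).
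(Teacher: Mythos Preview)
Your proposal is correct and takes essentially the same approach as the paper: the paper partitions $\k'(\v^k)$ into a $2\times 2$ block matrix with the bottom-right $3\times 3$ block $\D^k$ (built from $\I,\Z^k,\S^k$), forms the Schur complement $\H^k=\A^k-\B^k(\D^k)^{-1}\C^k$, identifies it with your $2\times 2$ system in $\Q^k=\bar{\L}$ and $\nabla\h(\x^k)$, and then applies Lemma~\ref{blockInverse} a second time exactly as you do. Your row-by-row elimination of $\delta\z,\delta\w,\delta\s$ is just the explicit Gaussian-elimination realization of the same Schur complement, and your back-substitution for boundedness corresponds to the paper's explicit block-inverse formula; the assumptions invoked (Lemma~\ref{boundedness} for $(\S^k)^{-1},(\Z^k)^{-1}$, (B3) for $\Q^k$, (B2) for the $\nabla\h$ Schur complement) are identical.
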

\begin{proof}
Let 
\begin{align*}
& \A^k = \left[ \begin{array}{cc}
\nabla_{\x}^2 L(\v^k)   &  -\nabla \h(\x^k) \\
(\nabla \h(\x^k))^\T  & \0
\end{array}  \right], 
\B^k = \left[ \begin{array}{ccc}
-\nabla \g(\x^k)  &  \0 & \0  \\
\0  &  \0 &  \0 \\
\end{array}  \right], \\
& \C^k = \left[ \begin{array}{cc}
(\nabla \g(\x^k))^\T    &  \0  \\
\0  &  \0  \\
\0  &    \0 
\end{array}  \right], \ \text{and} \
\D^k = \left[ \begin{array}{ccc}
\0  & -\I  & \0  \\
\I &  0  & -\I  \\
\0 &   \Z^k  & \S^k   
\end{array}  \right].
\end{align*}
Then, we have
\begin{align}
\k' (\v^k) = \left[ \begin{array}{ccccc}
\nabla_{\x}^2 L(\v^k) & -\nabla \h(\x^k) & -\nabla \g(\x^k) & \0 & \0 \\
\left( \nabla \h(\x^k)\right)^{\T} & \0 & \0 & \0 & \0  \\
\left( \nabla \g(\x^k)\right)^{\T} & \0 & \0 & -\I & \0 \\
\0  & \0  &  \I  &  \0  &  -\I  \\
\0 & \0 & \0  &  \Z^k & \S^k
\end{array} \right]
= \left[ \begin{array}{cc} \A^k & \B^k \\ \C^k & \D^k \end{array}\right]
\end{align}
From Lemma \ref{boundAboveBelow},  the two sequences $\{\s^k\}$ and $\{\z^k\}$ are
bounded and each component of the two sequences are bounded below and away from zeros, 
therefore, the sequence $\{(\D^k)^{-1}\}$ is also bounded, where
\[
(\D^k)^{-1} = \left[ \begin{array}{ccc}
(\S^k)^{-1}\Z^k &  \I  & (\S^k)^{-1}  \\
-\I &  \0  & \0  \\
(\S^k)^{-1}\Z^k &  \0  &  (\S^k)^{-1}
\end{array}  \right].
\]
Let 
$\bar{\L} = \nabla_{\x}^2 L(\v^k) + \nabla \g(\x^k) (\S^k)^{-1} \Z^k \nabla \g(\x^k)^{\T}$.
We know that $\bar{\L}$ is invertible from Lemma~\ref{boundAboveBelow} and (B3),
therefore,  
$(\nabla \h(\x^k))^{\T} \bar{\L}^{-1} \nabla \h(\x^k)$ is also invertible from (B2).
Therefore, 
\[
\H^k :=  \A^k - \B^k(\D^k)^{-1}\C^k   = \left[ \begin{array}{cc}
\bar{\L}  & -\nabla \h(\x^k) \\
(\nabla \h(\x^k))^{\T}  &  \0
\end{array}  \right]
\]
is invertible from Lemma \ref{blockInverse}. Since 
$\D^k$ and $\H^k$ are invertible, using Lemma \ref{blockInverse} again,
we conclude that $\k'(\v^k)$ is invertible. 

Next, we show the boundedness of $\{[\k'(\v^k)]^{-1}\}$.
Since $[\k'(\v^k)]^{-1}$ is given by
\[
[\k'(\v^k)]^{-1} = \left[\begin{array}{cc}
(\H^k)^{-1} & -(\H^k)^{-1} \B^k (\D^k)^{-1} \\ 
-(\D^k)^{-1} \C^k (\H^k)^{-1} & (\D^k)^{-1} \C^k (\H^k)^{-1} \B^k (\D^k)^{-1} + (\D^k)^{-1}
\end{array}\right],
\]
we need to show that $\{(\H^k)^{-1}\}$ is bounded.
For each $k$, $(\H^k)^{-1}$ is given as follows:
\[
(\H^k)^{-1} = \left[\begin{array}{cc}
\bar{\L}^{-1} + \bar{\L}^{-1} \nabla \h(\x^k) \bar{\H}^{-1} (\nabla \h(\x^k))^{\T} 
\bar{\L}^{-1} &  \bar{\L}^{-1} \nabla \h(\x^k) \bar{\H}^{-1} \\
\bar{\H}^{-1} (\nabla \h(\x^k))^{\T} \bar{\L}^{-1} & + \bar{\H}^{-1}
\end{array}\right],
\]
$\bar{\H} = (\nabla \h(\x^k))^{\T} \bar{\L}^{-1} \nabla \h(\x^k)$.
Therefore, it is enough to show 
the boundedness of $\bar{\L}, \bar{\H}, \bar{\L}^{-1}$ and $\bar{\H}^{-1}$,
and this is done by Assumptions (B3) and (B2), and Lemma~\ref{boundAboveBelow}.
This completes the proof. 
\hfill\qed
\end{proof}

The next lemma follows directly from Lemma~\ref{invertibility}.

\begin{lemma}\label{ettzTheorem}
Assume that (B1)-(B3) hold and $\{\v^k\} \subset \Omega(\epsilon)$, then 
(a) Lines 7 and 9 in Algorithms \ref{mainAlgo0}, \ref{mainAlgo2}, and \ref{mainAlgo1} 
are well-defined, and (b) the sequences $\{ \dot{\v}^k \}$ and $\{ \ddot{\v}^k \}$ 
are bounded.
\end{lemma}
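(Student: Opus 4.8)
The plan is to derive the boundedness of $\{\dot{\v}^k\}$ and $\{\ddot{\v}^k\}$ directly from the formula $\dot{\v}^k = [\k'(\v^k)]^{-1}(\k(\v^k) - \sigma_k\mu_k\bar{\e})$ coming from \eqref{firstOrderM}, and an analogous formula for $\ddot{\v}^k$, by combining the bound on $\{[\k'(\v^k)]^{-1}\}$ from Lemma~\ref{invertibility} with the bound on $\{\k(\v^k)\}$ from Lemma~\ref{kvBound}. The well-definedness claim (a) is essentially immediate: Lemma~\ref{invertibility} asserts $\k'(\v^k)$ is invertible for $\{\v^k\}\subset\Omega(\epsilon)$, and the coefficient matrices of \eqref{firstOrderM}, \eqref{secondOrder}, and \eqref{secondOrderM} are all exactly $\k'(\v^k)$, so Lines~7 and~9 of each algorithm admit unique solutions.

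For the quantitative bound on $\{\dot{\v}^k\}$, I would write $\|\dot{\v}^k\| \le \|[\k'(\v^k)]^{-1}\|\cdot\|\k(\v^k)-\sigma_k\mu_k\bar{\e}\|$. Lemma~\ref{invertibility} bounds the first factor; for the second factor, $\|\k(\v^k)\|$ is bounded by Lemma~\ref{kvBound}, $\sigma_k \in [0,\tfrac12)$ is bounded, and $\mu_k = (\z^k)^\T\s^k/p \le \|\Z^k\s^k\|$ which is bounded again by Lemma~\ref{kvBound}; hence $\|\bar{\e}\|\mu_k$ is bounded and therefore so is $\{\dot{\v}^k\}$. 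For $\{\ddot{\v}^k\}$ I would invoke the same inverse bound, so the work reduces to bounding the right-hand side of \eqref{secondOrder} (resp.\ \eqref{secondOrderM}). Those right-hand-side vectors are built from the Hessians $\nabla_{\x}^2 h_i$, $\nabla_{\x}^2 g_i$, the third-order derivative tensor $\nabla_{\x}^3 L$, and the already-bounded vectors $\dot{\x}, \dot{\y}, \dot{\w}, \dot{\z}, \dot{\s}$; by (B1) the iterates $\x^k$ lie in a bounded set, and by the smoothness assumption (A2)/(B-type continuity) the relevant derivative arrays are continuous, hence bounded on the closure of that set. The term $-2\dot{\Z}\dot{\s}$ is bounded since $\dot{\z}^k,\dot{\s}^k$ are. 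Combining, the right-hand side of \eqref{secondOrder} or \eqref{secondOrderM} is bounded, and multiplying by the bounded inverse gives boundedness of $\{\ddot{\v}^k\}$.

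The main obstacle I anticipate is bounding the third-order term $\p(\v^k)=-(\nabla_{\x}^3 L(\v^k))\dot{\x}\dot{\x}+2(\nabla_{\x}^2\h(\x^k))\dot{\y}\dot{\x}+2(\nabla_{\x}^2\g(\x^k))\dot{\w}\dot{\x}$ appearing in \eqref{secondOrder}: one must argue that $\nabla_{\x}^3 L(\v^k)$ stays bounded along the sequence. This uses the third-order differentiability in (A2), the boundedness of $\{\x^k\}$ from (B1), and — crucially — the boundedness of the multipliers $\{\y^k\}$ and $\{\w^k\}$ established in Lemmas~\ref{yBound} and~\ref{zBound}, since $\nabla_{\x}^3 L$ depends linearly on $y_i$ and $w_i$ through the expansion \eqref{2ndL}. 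For Algorithms~\ref{mainAlgo2} and~\ref{mainAlgo1} this is either absent or avoided (the third-order terms are dropped in \eqref{secondOrderM}), so the argument is cleaner there. I would state the proof once in a way that covers \eqref{secondOrder} (the hardest case, used in Algorithm~\ref{mainAlgo0}) and remark that \eqref{secondOrderM} is a strictly easier special case whose right-hand side retains only the already-bounded Hessian-times-first-derivative terms.
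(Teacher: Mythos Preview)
Your proposal is correct and follows essentially the same approach as the paper: claim~(a) is immediate from Lemma~\ref{invertibility}, and for~(b) you bound $\dot{\v}^k$ and $\ddot{\v}^k$ by combining the bound on $\{[\k'(\v^k)]^{-1}\}$ with bounds on the respective right-hand sides. Your write-up is actually more detailed than the paper's proof, which simply says ``the boundedness of $\{[\k'(\v^k)]^{-1}\}$ and $\{\v^k\}$ guarantees the boundedness of $\{\dot{\v}^k\}$'' and then defers the $\ddot{\v}^k$ case to ``a similar argument''; your explicit treatment of the third-order term $\p(\v^k)$ and the need for multiplier boundedness from Lemmas~\ref{zBound} and~\ref{yBound} fills in what the paper leaves implicit.
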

\begin{proof}
The claim (a) follows directly from Lemma~\ref{invertibility}.
In view of (\ref{firstOrderM}), the boundedness of 
$\{ [\k'(\v^k)]^{-1} \}$ and $\{ \v^k \}$ guarantees the boundedness of 
$\{ \dot{\v}^k \}$. Using (\ref{secondOrder}) or (\ref{secondOrderM}), 
plus the boundedness of $\{ \dot{\v}^k \}$,
the boundedness of $\{ \ddot{\v}^k \}$ follows from a similar argument. 
This shows claim (b).
\hfill\qed
\end{proof}

The boundedness results proved in the above lemmas guarantee
that the step size $\alpha_k$ is bounded below and away from zero 
for every $k$ before the algorithms converge. The following lemmas 
are aimed at showing this claim.

\begin{lemma}
Let $\zeta_0 > 0$ be 
the minimum value of the elements in $\{ (\w^k, \s^k, \z^k) \}$, and 
$(\zeta_1, \zeta_2) > \0$ be the upper bound of the absolute values of the 
element  in $\{ (\dot{\w}^k, \dot{\s}^k, \dot{\z}^k) \}$
and $\{ (\ddot{\w}^k, \ddot{\s}^k, \ddot{\z}^k) \}$. Assume that (B1)-(B3) hold. 
If $\{\v^k\} \subset \Omega(\epsilon)$, then the sequence
$\{ \tilde{\alpha}_k \}$ is bounded below and away from zero.
More specifically,
$\tilde{\alpha}_k \ge \frac{\zeta_1 + \sqrt{\zeta_1^2 + 4 (1-\delta_1) \zeta_0 \zeta_2}}{2\zeta_2}$ 
for all $k \ge 0$. 
\label{tildeAlpha}
\end{lemma}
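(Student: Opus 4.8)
The plan is to exploit the analytic formula for the boundary step size that underlies the definition of $\tilde{\alpha}_k$ in \textrm{(\ref{alpha})}, namely that $\tilde{\alpha}_k = \min_i \min\{\alpha_{w_i}^k, \pi/2\}$ where each $\alpha_{w_i}^k$ is the largest $\alpha$ for which the $i$-th component of \textrm{(\ref{positive})} holds on all of $[0,\alpha]$. Since \textrm{(\ref{positive})} reads $w_i^k - \dot{w}_i^k \sin(\alpha) + \ddot{w}_i^k(1-\cos(\alpha)) \ge \delta_1 w_i^k$, the plan is first to reduce this to a clean scalar estimate: it suffices to find one explicit $\alpha^\star > 0$, depending only on $\zeta_0, \zeta_1, \zeta_2$ and $\delta_1$, such that the left-hand side stays $\ge \delta_1 w_i^k$ for every $i$ and every $\alpha \in [0,\alpha^\star]$; then $\tilde{\alpha}_k \ge \min\{\alpha^\star, \pi/2\}$, and one checks that the claimed bound is itself $\le \pi/2$ (or absorb that into the $\min$).

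The key estimate I would carry out is the following. Rearranging \textrm{(\ref{positive})}, the $i$-th inequality is equivalent to
\[
(1-\delta_1) w_i^k \ge \dot{w}_i^k \sin(\alpha) - \ddot{w}_i^k (1-\cos(\alpha)).
\]
Using $w_i^k \ge \zeta_0$, $|\dot{w}_i^k| \le \zeta_1$, $|\ddot{w}_i^k| \le \zeta_2$, together with the elementary bounds $\sin(\alpha) \le \alpha$ and $1 - \cos(\alpha) \le \alpha^2/2$ on $[0,\pi/2]$ (or more simply $1-\cos(\alpha)\le\alpha$, but the quadratic bound is what produces the stated formula), it is enough that
\[
(1-\delta_1)\zeta_0 \ge \zeta_1 \alpha + \zeta_2 \cdot \tfrac{\alpha^2}{2}\cdot\text{(const)},
\]
i.e. a quadratic-in-$\alpha$ inequality of the form $\zeta_2 \alpha^2 - (\text{something})\cdot\zeta_1\alpha - (\text{something})\cdot(1-\delta_1)\zeta_0 \le 0$. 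I would match the coefficients so that the positive root of the corresponding quadratic equals exactly $\dfrac{\zeta_1 + \sqrt{\zeta_1^2 + 4(1-\delta_1)\zeta_0\zeta_2}}{2\zeta_2}$; concretely this arises from requiring $\zeta_2\alpha^2 - \zeta_1\alpha - (1-\delta_1)\zeta_0 \le 0$, whose larger root is precisely that expression by the quadratic formula. Because the quadratic is negative for all $\alpha$ between its two roots and in particular on $[0,\alpha^\star]$ with $\alpha^\star$ the positive root, the inequality \textrm{(\ref{positive})} holds for all $i$ and all $\alpha \in [0,\alpha^\star]$, giving $\alpha_{w_i}^k \ge \alpha^\star$ for every $i$ and hence $\tilde{\alpha}_k \ge \min\{\alpha^\star, \pi/2\}$.

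The ingredients that make this rigorous are all already in place: Lemma~\ref{boundAboveBelow} (via the hypothesis $\{\v^k\}\subset\Omega(\epsilon)$ and (B1)--(B2)) guarantees that $\{(\w^k,\s^k,\z^k)\}$ is bounded below and away from zero, so a uniform $\zeta_0 > 0$ exists; Lemma~\ref{ettzTheorem} guarantees $\{\dot{\v}^k\}$ and $\{\ddot{\v}^k\}$ are bounded, so uniform $\zeta_1,\zeta_2$ exist; and (B3) enters only through those two lemmas. Thus the bound is uniform in $k$, which is exactly the "bounded below and away from zero" conclusion. The main obstacle, and the only place requiring care, is the bookkeeping of which elementary trigonometric inequality ($\sin\alpha\le\alpha$ versus $1-\cos\alpha\le\alpha^2/2$ versus $1-\cos\alpha\le\alpha$) to use so that the resulting quadratic has its root exactly matching the stated closed form — and verifying the sign conventions when $\dot{w}_i^k$ or $\ddot{w}_i^k$ is negative, where one must be sure the worst case is captured by replacing them with $+\zeta_1$ and $-\zeta_2$ (or $\pm\zeta_2$) as appropriate. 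Once the correct combination is fixed, the remainder is a one-line application of the quadratic formula, and I would also note at the end that since the square root term is positive the bound is manifestly strictly positive, completing the proof.
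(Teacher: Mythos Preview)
Your approach is essentially identical to the paper's: rearrange \textrm{(\ref{positive})} into $(1-\delta_1)w_i^k \ge \dot{w}_i^k\sin(\alpha) - \ddot{w}_i^k(1-\cos(\alpha))$, invoke Lemma~\ref{boundAboveBelow} for the lower bound $\zeta_0$ and Lemma~\ref{ettzTheorem} for the upper bounds $\zeta_1,\zeta_2$, replace the trigonometric terms by polynomial bounds, and read off the positive root of the resulting quadratic. The only cosmetic difference is that the paper uses the cruder estimate $1-\cos(\alpha)\le 1-\cos^2(\alpha)=\sin^2(\alpha)\le\alpha^2$ rather than $\alpha^2/2$, which is exactly what produces the quadratic $\zeta_2\alpha^2+\zeta_1\alpha-(1-\delta_1)\zeta_0\le 0$ and hence (up to the sign of the $\zeta_1$ term, which is a typo in the stated formula) the displayed bound; your hesitation over which trigonometric inequality to use is thus resolved by choosing $1-\cos(\alpha)\le\alpha^2$.
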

\begin{proof}
We can rewrite (\ref{positive}) as
\begin{equation}
(1 - \delta_1) \w^k -\dot{\w}^k\sin(\alpha)+\ddot{\w}^k(1-\cos(\alpha))
\ge 0.
\label{alphaiNew}
\end{equation}
In view of Lemma \ref{boundAboveBelow}, $\{(\w^k, \s^k, \z^k)\} >\0$ 
is bounded below and away from zero, thus
$\{(1 - \delta_1) \w^k\} $ is bounded below and away from zero.
Since $\{\dot{\w}^k\}$ and $\{\ddot{\w}^k\}$ are bounded from Lemma~\ref{ettzTheorem}, 
$\{\tilde{\alpha}_k\}$ should be bounded below and away from zero such that the
inequality (\ref{alphaiNew}) holds for all $\alpha \in [0, \tilde{\alpha}_k]$. 
More precisely, since we take $\alpha$ from the range 
$\left[0, \frac{\pi}{2}\right]$, it holds that $\sin(\alpha) \le \alpha$ 
and $1-\cos(\alpha) \le 1-\cos^2(\alpha) =\sin^2(\alpha) \le \alpha^2$.
Therefore, (\ref{alphaiNew}) holds when
\begin{align*}
(1 - \delta_1) \zeta_0 -\zeta_1 \sin(\alpha) - \zeta_2 (1-\cos(\alpha))
\ge (1 - \delta_1) \zeta_0 - \zeta_1 \alpha - \zeta_2 \alpha^2 \ge 0.
\end{align*}
This indicates that 
(\ref{alphaiNew}) holds for any $\alpha \in 
\left[0, \frac{\zeta_1 + \sqrt{\zeta_1^2 + 4 (1-\delta_1) \zeta_0 \zeta_2}}{2\zeta_2} \right]$.
We can apply the same arguments
to $\{\s^k \}$ and $\{\z^k \}$. 
This proves the lemma.
\hfill \qed
\end{proof}

Next, we show that  $\{ \bar{\alpha}_k \}$ is bounded below and away from zero.
\begin{lemma} 
Assume that $\{ \v^k\} \subset \Omega(\epsilon)$, and Assumptions (B1)-(B3) hold. 
Then the sequence $\{ \bar{\alpha}_k \}$ is bounded below and away from zero. 
\label{barAlpha}
\end{lemma}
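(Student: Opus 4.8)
The plan is to mimic the argument used for Lemma~\ref{tildeAlpha}, replacing the linear constraint \eqref{positive} by the requirement that $\g(\x^k(\alpha)) \ge \delta_1 \s^k > \0$. First I would recall that by construction $\bar{\alpha}_k \le \tilde{\alpha}_k$ and that the previous lemma already gives a positive lower bound on $\tilde{\alpha}_k$, so it suffices to bound below the largest $\alpha$ for which the componentwise inequality $g_i(\x^k(\alpha)) \ge \delta_1 s_i^k$ holds for every $i=1,\ldots,p$. Since $\{\v^k\}\subset\Omega(\epsilon)$ and (B1)--(B3) hold, Lemma~\ref{boundedness} gives that $\{\s^k\}$ is bounded below and away from zero, so there is a $\zeta_0>0$ with $\delta_1 s_i^k \le \delta_1 \zeta_0'$ from below; more importantly each $g_i(\x^k) = s_i^k + (g_i(\x^k)-s_i^k)$, and by Lemma~\ref{kvBound} the residual $\{\g(\x^k)-\s^k\}$ is bounded, while by (C2)/(C3) maintained at the previous step we have $\g(\x^k) > \0$. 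The cleaner route is: along the arc, $\x^k(\alpha) = \x^k - \dot{\x}^k\sin(\alpha) + \ddot{\x}^k(1-\cos(\alpha))$, and by Lemma~\ref{ettzTheorem} the sequences $\{\dot{\x}^k\}$ and $\{\ddot{\x}^k\}$ are bounded, so $\|\x^k(\alpha)-\x^k\| \le c_1\sin(\alpha)+c_2(1-\cos(\alpha)) \le c_1\alpha + c_2\alpha^2$ for constants $c_1,c_2$ independent of $k$.

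Next I would invoke the Lipschitz continuity of $\g$ from (A2) on the level set (which contains all iterates by Lemma~\ref{kvBound} and (B1)): there is $L_g>0$ with $\|\g(\y)-\g(\x)\| \le L_g\|\y-\x\|$, hence for each $i$,
\[
g_i(\x^k(\alpha)) \ge g_i(\x^k) - L_g\|\x^k(\alpha)-\x^k\| \ge g_i(\x^k) - L_g(c_1\alpha + c_2\alpha^2).
\]
Since $\g(\x^k) > \0$ is maintained, and in fact by the way $\bar\alpha_{k-1}$ was chosen at the previous iteration we have $g_i(\x^k) \ge \delta_1 s_i^{k-1}$, combined with the uniform lower bound on $\{\s^k\}$ from Lemma~\ref{boundedness}, there is a uniform $\kappa>0$ with $g_i(\x^k) \ge \kappa$ for all $k$ and all $i$. (Alternatively, one can close this more directly: $g_i(\x^k) - \delta_1 s_i^k \ge s_i^k(1-\delta_1) - |g_i(\x^k)-s_i^k|$; but using the set $\Omega(\epsilon)$ and the fact that $\phi(\v^k)$ is decreasing keeps the residual $|g_i(\x^k)-s_i^k|$ under control only if we further argue it is small — the simplest rigorous statement is via the previous-iteration bound, which is how the other step-size lemmas chain together.)

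Then I would combine the two estimates: $g_i(\x^k(\alpha)) - \delta_1 s_i^k \ge \kappa - \delta_1\zeta_0^{\max} - L_g(c_1\alpha + c_2\alpha^2)$ is not quite right unless $\kappa > \delta_1\zeta_0^{\max}$, so the correct bookkeeping is to write $g_i(\x^k(\alpha)) - \delta_1 s_i^k \ge (g_i(\x^k) - \delta_1 s_i^k) - L_g(c_1\alpha+c_2\alpha^2)$ and use that the gap $g_i(\x^k)-\delta_1 s_i^k$ is bounded below and away from zero (since $g_i(\x^k) \ge s_i^k > 0$ implies $g_i(\x^k) - \delta_1 s_i^k \ge (1-\delta_1)s_i^k \ge (1-\delta_1)\bar{s}$ with $\bar{s}>0$ the uniform lower bound on $\{\s^k\}$ from Lemma~\ref{boundedness}). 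Hence $g_i(\x^k(\alpha)) \ge \delta_1 s_i^k$ holds whenever $L_g(c_1\alpha + c_2\alpha^2) \le (1-\delta_1)\bar{s}$, which is satisfied for all $\alpha$ in an interval $[0,\alpha^*]$ with $\alpha^* > 0$ depending only on $L_g, c_1, c_2, \delta_1, \bar{s}$ — a quadratic-root bound entirely analogous to the one in Lemma~\ref{tildeAlpha}. Taking $\bar{\alpha}_k \ge \min\{\alpha^*, \tilde{\alpha}_k\}$ and using the lower bound on $\tilde{\alpha}_k$ completes the proof.

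I expect the main obstacle to be the clean identification of the uniform lower bound on $g_i(\x^k)$ (equivalently, keeping Condition (C2) as a genuine strict-interior condition with a uniform margin, not just pointwise positivity); the fix is to exploit that at every prior iteration the step-size rule enforced $\g(\x^{j}) \ge \delta_1 \s^{j-1}$ and that $\{\s^k\}$ is uniformly bounded away from zero by Lemma~\ref{boundedness}, so the margin propagates. Everything else is the same Taylor/Lipschitz-plus-quadratic-root estimate already executed in Lemma~\ref{tildeAlpha}.
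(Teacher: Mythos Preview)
Your proposal is correct and follows essentially the same approach as the paper: both use Lemma~\ref{boundedness} to get $\{\s^k\}$ (and hence $\g(\x^k)$) bounded below and away from zero, Lemma~\ref{ettzTheorem} to bound $\{\dot{\x}^k\},\{\ddot{\x}^k\}$, and then continuity of $\g$ to conclude. Your version is more quantitative---you invoke the Lipschitz constant of $\g$ from (A2) and produce an explicit quadratic-root lower bound in the style of Lemma~\ref{tildeAlpha}, and you are more careful about justifying the uniform margin $g_i(\x^k)-\delta_1 s_i^k \ge (1-\delta_1)\bar{s}$ (note that for Algorithms~\ref{mainAlgo2} and~\ref{mainAlgo1} this is immediate since Step~15 sets $\s^k=\g(\x^k)$ exactly)---whereas the paper is content with a one-line qualitative continuity argument.
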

\begin{proof}
According to Lemma \ref{boundedness}, $\{ \s^k\}$ is bounded below and away from zero, 
it follows that $\g(\x^k) \ge \s^k$ is bounded below and away from zero
for all $k$ before the algorithms converge. Since $\g(\x)$ is continuous, and 
according to Lemma \ref{ettzTheorem}, $\{ \dot{\v}^k \}$ and $\{ \ddot{\v}^k \}$ 
are bounded, it must have an $\bar{\alpha}_k >0$ bounded below and away from zero
such that for all $\alpha \in (0, \bar{\alpha}_k]$, 
$\g(\x(\alpha))=\g(\x^k-\dot{\x}\sin(\alpha)+\ddot{\x}(1-\cos(\alpha)))> 0$.
\hfill \qed
\end{proof}

Then, we show that  $\{ \check{\alpha}_k \}$ is bounded below and away from zero.

\begin{lemma} 
If $\{ \v^k\} \subset \Omega(\epsilon)$ and Assumptions (B1)-(B4) hold, then the sequence
$\{ \check{\alpha}_k \}$ is bounded below and away from zero. More precisely, let 
$\epsilon>0$, $\sigma \ge \bar{\sigma} \ge 0$, and $\delta_2$ satisfy
$\frac{\phi'({0})^2}{2L} \ge \frac{2\epsilon^2(1-\sigma)^2}{L} \ge \delta_2>0$. 
Then, Condition (\ref{alphaCheck}) will hold for 
$\check{\alpha}_k \ge \frac{2\epsilon (1-\sigma)}{L}$.
\label{checkAlpha}
\end{lemma}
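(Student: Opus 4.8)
The plan is to turn the claim into a one-dimensional quadratic inequality in the step size $\alpha$, using two ingredients that are already available: the first-order decrease estimate (\ref{objReduction}), i.e. $\phi'(\v^k(\alpha))|_{\alpha=0} \le -2\phi(\v^k)(1-\sigma)$, and Assumption (B4), which I read as: the scalar map $\alpha \mapsto \frac{d}{d\alpha}\phi(\v^k(\alpha))$ along the arc (\ref{vAlpha}) satisfies $\frac{d}{d\alpha}\phi(\v^k(\alpha)) - \phi'(\v^k(\alpha))|_{\alpha=0} \le L\alpha$, with $L$ independent of $k$. First I would write the exact identity
\[
\phi(\v^k(\alpha)) = \phi(\v^k) + \int_0^\alpha \frac{d}{d\tau}\phi(\v^k(\tau))\,d\tau,\qquad \alpha \in (0,\bar{\alpha}_k],
\]
which is legitimate because $\v^k(\tau)$ is the smooth trigonometric arc (\ref{vAlpha}) and $\phi$ is continuously differentiable by the smoothness assumption. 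Inserting the (B4) bound under the integral sign and integrating gives the quadratic majorant
\[
\phi(\v^k(\alpha)) \le \phi(\v^k) + \alpha\,\phi'(\v^k(\alpha))|_{\alpha=0} + \tfrac{L}{2}\alpha^2 .
\]

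Next I would use $\v^k \in \Omega(\epsilon)$, which forces $\phi(\v^k) \ge \epsilon$, so (\ref{objReduction}) gives $\phi'(\v^k(\alpha))|_{\alpha=0} \le -2\epsilon(1-\sigma)$ (this also makes the first displayed hypothesis $\frac{\phi'(0)^2}{2L} \ge \frac{2\epsilon^2(1-\sigma)^2}{L}$ automatic). Hence
\[
\phi(\v^k(\alpha)) \le \phi(\v^k) - 2\epsilon(1-\sigma)\alpha + \tfrac{L}{2}\alpha^2 ,
\]
so the decrease $\phi(\v^k(\alpha)) < \phi(\v^k) - \delta_2$ demanded in (\ref{alphaCheck}) holds as soon as the scalar quadratic $q(\alpha) := \tfrac{L}{2}\alpha^2 - 2\epsilon(1-\sigma)\alpha + \delta_2$ is nonpositive. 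The quadratic $q$ has real roots exactly when $\delta_2 \le \frac{2\epsilon^2(1-\sigma)^2}{L}$ — the stated hypothesis — and its vertex sits at $\alpha^\star := \frac{2\epsilon(1-\sigma)}{L}$, where $q(\alpha^\star) = \delta_2 - \frac{2\epsilon^2(1-\sigma)^2}{L} \le 0$. Thus $\alpha^\star$ passes the decrease test; together with Lemma \ref{barAlpha}, which keeps $\bar{\alpha}_k$ bounded below and away from zero, this gives $\check{\alpha}_k \ge \min\{\alpha^\star, \bar{\alpha}_k\}$, a quantity bounded below and away from zero, and it sharpens to the advertised $\check{\alpha}_k \ge \frac{2\epsilon(1-\sigma)}{L}$ in the regime $\alpha^\star \le \bar{\alpha}_k$.

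The arithmetic is routine; the main obstacle is the uniform-in-$k$ justification of the quadratic majorant. One must check that the Lipschitz constant $L$ of (B4) is valid at every iterate and along the whole arc, not just at the base point $\alpha=0$ — and this is exactly where $\{\v^k\}\subset\Omega(\epsilon)$ together with the boundedness of $\{\v^k\}$, $\{\dot{\v}^k\}$, $\{\ddot{\v}^k\}$ from Lemmas \ref{boundedness} and \ref{ettzTheorem} does the work, since these confine each arc to a fixed compact region on which the relevant second derivatives are uniformly bounded. A minor edge case is $\delta_2 = \frac{2\epsilon^2(1-\sigma)^2}{L}$, where $q$ only touches zero at $\alpha^\star$ and (\ref{alphaCheck}) wants a strict decrease: this is handled by taking $\delta_2$ strictly below the threshold, or by observing that for $\alpha$ just below $\alpha^\star$ the first-order term $-2\epsilon(1-\sigma)\alpha$ still strictly dominates the $O(\alpha^2)$ remainder, so a strictly positive admissible step persists.
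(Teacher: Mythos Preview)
Your proposal is correct and follows essentially the same route as the paper: both derive the quadratic majorant $\phi(\v^k(\alpha)) \le \phi(\v^k) + \phi'(0)\alpha + \tfrac{L}{2}\alpha^2$ from (B4) via the integral form of Taylor's theorem, combine it with the first-order decrease bound (\ref{objReduction}) and $\phi(\v^k)\ge\epsilon$, and read off the lower bound $\check{\alpha}_k \ge \tfrac{2\epsilon(1-\sigma)}{L}$ from the resulting scalar quadratic. If anything, your version is slightly more careful than the paper's in flagging the restriction $\alpha \in (0,\bar{\alpha}_k]$ and the strict-inequality edge case at $\delta_2 = \tfrac{2\epsilon^2(1-\sigma)^2}{L}$.
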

\begin{proof}
Using (\ref{vAlpha}), for the sake of brevity, we will use $\phi(\alpha)$ for $\phi(\v(\alpha))$ and
use $\phi(0)$ for $\phi(\v^k)$, respectively. Using the mean-value theorem, Assumption (B4),
and (\ref{tmp0}), we have
\begin{eqnarray}
\phi(\alpha) & = & \phi(0) + \int_0^1 \phi'(t\alpha) dt \alpha 
\nonumber \\
 & = &  \phi(0) + \int_0^1 [\phi'(t\alpha) -\phi'(0) + \phi'(0)] dt \alpha
\nonumber \\
 & = &  \phi(0) + \phi'(0) \alpha t|_0^1 +\int_0^1 [\phi'(t\alpha) -\phi'(0)] dt \alpha
\nonumber \\
 & \le &  \phi(0) + \phi'(0) \alpha +\int_0^1 L \alpha^2 t dt
\nonumber \\
  & = &  \phi(0) + \phi'(0) \alpha + \frac{1}{2} L\alpha^2 t^2 |_{t=0}^1
\nonumber \\
 & = &  \phi(0) + \phi'(0) \alpha + \frac{1}{2} L\alpha^2.
%\nonumber \\
% & = &  \phi(0) - 2( \phi({0})-\sigma {\mu}^2/p) \alpha + \frac{1}{2} L\alpha^2.
\label{ckAlpha}
\end{eqnarray}
In order for $\phi(\alpha) \le  \phi(0) -\delta_2$, it may require that
$\phi'(0) \alpha + \frac{1}{2} L\alpha^2 \le -\delta_2$, which will hold for 
${\alpha} \in \left[ \frac{-\phi'(0)-\sqrt{\phi'({0})^2-2L\delta_2}}{L},
\frac{-\phi'(0)+\sqrt{\phi'({0})^2-2L\delta_2}}{L}\right]$. Multiplying $-1$
on both sides of (\ref{objReduction}) yields 
$-\phi'(0) \ge 2\phi(0)(1-\sigma) = 2\phi(\v^k)(1-\sigma) \ge 2\epsilon(1-\sigma)$. 
Therefore, assuming that we have selected $\delta_2 \le \frac{\phi'({0})^2}{2L}$, 
we have
\begin{eqnarray}
\check{\alpha}_k & \ge & \frac{-\phi'(0)+\sqrt{\phi'({0})^2-2L\delta_2}}{L}
\nonumber \\
& \ge &  \frac{-\phi'(0)}{L}
\nonumber \\
& \ge &  \frac{2\phi(\v^k)(1-\sigma)}{L}
\nonumber \\
 & \ge &  \frac{2\epsilon (1-\sigma)}{L}
\end{eqnarray}
is bounded below and away from zero because $(1-\sigma)$
is bounded below and away from zero, and $L$ is bounded.
\hfill \qed
\end{proof}

The following Lemma is used to show in Lemma~\ref{AlphaH} that $\{ \hat{\alpha}_k \}$ 
is bounded below and away from zero.

\begin{lemma}[\cite{yang17}]
Assume that ${\v^k}$ is the current point, %(\textit{i.e.}, $\v = \v^k$ at the $k$th iteration), 
$\dot{\v}$ satisfies (\ref{firstOrder}), and $\ddot{\v}$ satisfies (\ref{secondOrder}) 
or  (\ref{secondOrderM}). 
Let $\v(\alpha)$ be computed by using (\ref{vAlpha}). Then,
\begin{eqnarray}
{z}_i(\alpha) {s}_i(\alpha) & = &  {z}_i  {s}_i (1-\sin(\alpha)) +\sigma  {\mu} \sin(\alpha)
-(\dot{z}_i\ddot{s}_i+\ddot{z}_i\dot{s}_i)\sin(\alpha)(1-\cos(\alpha))
\nonumber \\
& &  +(\ddot{z}_i\ddot{s}_i-\dot{z}_i\dot{s}_i) (1-\cos(\alpha))^2.
\label{compWise}
\end{eqnarray}
%\label{AlphaH}
\end{lemma}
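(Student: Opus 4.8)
The plan is to compute $z_i(\alpha)s_i(\alpha)$ directly by substituting the arc formula (\ref{vAlpha}) componentwise and simplifying, using the structure of the right-hand sides of (\ref{firstOrderM}) and (\ref{secondOrder})/(\ref{secondOrderM}). From Theorem~\ref{arcFormula}, the $z$- and $s$-components of the arc are $z_i(\alpha) = z_i - \dot{z}_i \sin(\alpha) + \ddot{z}_i(1-\cos(\alpha))$ and $s_i(\alpha) = s_i - \dot{s}_i \sin(\alpha) + \ddot{s}_i(1-\cos(\alpha))$. First I would form the product of these two expressions and group terms by the powers of $\sin(\alpha)$ and $(1-\cos(\alpha))$ that appear: a constant term $z_i s_i$, a term in $\sin(\alpha)$ with coefficient $-(z_i \dot{s}_i + \dot{z}_i s_i)$, a term in $(1-\cos(\alpha))$ with coefficient $z_i \ddot{s}_i + \ddot{z}_i s_i$, a $\sin^2(\alpha)$ term with coefficient $\dot{z}_i \dot{s}_i$, a cross term $\sin(\alpha)(1-\cos(\alpha))$ with coefficient $-(\dot{z}_i \ddot{s}_i + \ddot{z}_i \dot{s}_i)$, and a $(1-\cos(\alpha))^2$ term with coefficient $\ddot{z}_i \ddot{s}_i$.

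The key simplification comes from the last block-row of the two linear systems. The last row of (\ref{firstOrderM}) reads $\Z^k \dot{\s} + \S^k \dot{\z} = \Z^k \s^k - \sigma \mu_k \e$, i.e., componentwise $z_i \dot{s}_i + s_i \dot{z}_i = z_i s_i - \sigma \mu$. Substituting this into the $\sin(\alpha)$ coefficient turns $z_i s_i - (z_i \dot{s}_i + \dot{z}_i s_i)\sin(\alpha)$ into $z_i s_i(1 - \sin(\alpha)) + \sigma\mu\sin(\alpha)$, which accounts for the first two terms on the right-hand side of (\ref{compWise}). Similarly, the last row of (\ref{secondOrder}) (or (\ref{secondOrderM}), whose last block-row is identical) reads $\Z^k \ddot{\s} + \S^k \ddot{\z} = -2\dot{\Z}\dot{\s}$, i.e., componentwise $z_i \ddot{s}_i + s_i \ddot{z}_i = -2 \dot{z}_i \dot{s}_i$. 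Substituting this into the $(1-\cos(\alpha))$ coefficient, and then combining the resulting $-2\dot{z}_i\dot{s}_i(1-\cos(\alpha))$ with the $\dot{z}_i\dot{s}_i\sin^2(\alpha) = \dot{z}_i\dot{s}_i(1-\cos^2(\alpha)) = \dot{z}_i\dot{s}_i(1-\cos(\alpha))(1+\cos(\alpha))$ term, yields $\dot{z}_i\dot{s}_i(1-\cos(\alpha))[(1+\cos(\alpha)) - 2] = -\dot{z}_i\dot{s}_i(1-\cos(\alpha))^2$. Adding the remaining $\ddot{z}_i\ddot{s}_i(1-\cos(\alpha))^2$ term gives the $(\ddot{z}_i\ddot{s}_i - \dot{z}_i\dot{s}_i)(1-\cos(\alpha))^2$ term, while the cross term $-(\dot{z}_i\ddot{s}_i + \ddot{z}_i\dot{s}_i)\sin(\alpha)(1-\cos(\alpha))$ carries over unchanged. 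Collecting everything reproduces (\ref{compWise}) exactly.

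The only mildly delicate point — and the main thing to get right — is the bookkeeping of the $\sin^2(\alpha)$ versus $(1-\cos(\alpha))^2$ terms: one must use the identity $\sin^2(\alpha) = 1 - \cos^2(\alpha) = (1-\cos(\alpha))(1+\cos(\alpha))$ to merge the $\dot{z}_i\dot{s}_i$ contribution (which naturally appears with $\sin^2(\alpha)$ from the product) with the $-2\dot{z}_i\dot{s}_i$ contribution (which appears with $(1-\cos(\alpha))$ after the second-order substitution) into a clean $-\dot{z}_i\dot{s}_i(1-\cos(\alpha))^2$. Everything else is a routine expansion. I would present the proof as: write out the two scalar arc components, expand the product, apply the two substitutions from the last rows of (\ref{firstOrderM}) and (\ref{secondOrder})/(\ref{secondOrderM}), invoke the trigonometric identity to combine the quadratic terms, and collect.
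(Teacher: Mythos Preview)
Your proposal is correct and follows essentially the same approach as the paper: expand the product of the two arc components, substitute the last-row identities $z_i\dot{s}_i+s_i\dot{z}_i=z_is_i-\sigma\mu$ and $z_i\ddot{s}_i+s_i\ddot{z}_i=-2\dot{z}_i\dot{s}_i$, and then use a trigonometric identity to collapse the $\sin^2(\alpha)$ and $(1-\cos(\alpha))$ contributions into the single $(1-\cos(\alpha))^2$ term. The paper writes the identity as $\sin^2(\alpha)+2\cos(\alpha)-2=-(1-\cos(\alpha))^2$ rather than your factoring $\sin^2(\alpha)=(1-\cos(\alpha))(1+\cos(\alpha))$, but this is the same computation.
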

\begin{proof}
Using the last rows of (\ref{firstOrderM}) and (\ref{secondOrder}), 
we have
\begin{eqnarray}
{z}_i\AN{\alpha}{s}_i\AN{\alpha} & = & 
[{z}_i-\dot{z}_i\sin(\alpha)+\ddot{z}_i(1-\cos(\alpha))]
[{s}_i-\dot{s}_i\sin(\alpha)+\ddot{s}_i(1-\cos(\alpha))]
\nonumber \\
& = & {z}_i {s}_i-(\dot{z}_i{s}_i+ {z}_i\dot{s}_i)\sin(\alpha)
+(\ddot{z}_i {s}_i+ {z}_i\ddot{s}_i) (1-\cos(\alpha)) 
+ \dot{z}_i\dot{s}_i \sin^2(\alpha) \nonumber \\
& & -(\dot{z}_i\ddot{s}_i+\ddot{z}_i\dot{s}_i)
\sin(\alpha)(1-\cos(\alpha))
+\ddot{z}_i\ddot{s}_i  (1-\cos(\alpha))^2
\nonumber \\
& = & {z}_i {s}_i (1-\sin(\alpha)) +\sigma {\mu} \sin(\alpha) -2\dot{z}_i\dot{s}_i
(1-\cos(\alpha)) + \dot{z}_i\dot{s}_i \sin^2(\alpha) \nonumber \\
& & -(\dot{z}_i\ddot{s}_i+\ddot{z}_i\dot{s}_i)
\sin(\alpha)(1-\cos(\alpha))
+\ddot{z}_i\ddot{s}_i (1-\cos(\alpha))^2
\nonumber \\
& = & {z}_i {s}_i (1-\sin(\alpha)) +\sigma {\mu} \sin(\alpha)
+\dot{z}_i\dot{s}_i (\sin^2(\alpha)+2\cos(\alpha)-2)
\nonumber \\
& & -(\dot{z}_i\ddot{s}_i+\ddot{z}_i\dot{s}_i)\sin(\alpha)(1-\cos(\alpha))
+\ddot{z}_i\ddot{s}_i (1-\cos(\alpha))^2.
\nonumber 
\end{eqnarray}
Substituting $\sin^2(\alpha)+2\cos(\alpha)-2=-1+2\cos(\alpha)
-\cos^2(\alpha)=-(1-\cos(\alpha))^2$ into the last equation gives
(\ref{compWise}). Since the last rows of (\ref{secondOrder}) and (\ref{secondOrderM})
are the same, this conclusion applies to Algorithms \ref{mainAlgo0}, 
\ref{mainAlgo2}, and \ref{mainAlgo1}.
\hfill \qed 
\end{proof}

\begin{lemma}[\cite{yiy22}]
Assume that (B1)-(B4) hold. If $\{ \v^k\} \subset \Omega(\epsilon)$ 
for some $\epsilon > 0$, then $\{\hat{\alpha}_k\}$ is bounded below and away from zero.
\label{AlphaH}
\end{lemma}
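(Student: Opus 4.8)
The plan is to reduce the statement to a single uniform positive lower bound for $\hat m_k$. By (\ref{hatAlpha}) we have $\hat\alpha_k=\max\{\alpha\in(0,\check\alpha_k]:\hat m_k(\alpha)\ge 0\}$, and Lemma~\ref{checkAlpha} already gives that $\{\check\alpha_k\}$ is bounded below and away from zero. Hence it suffices to exhibit one $\alpha^\ast\in(0,\pi/2]$, independent of $k$, with $\hat m_k(\alpha)\ge 0$ for every $\alpha\in(0,\alpha^\ast]$; then $\hat\alpha_k\ge\min\{\alpha^\ast,\check\alpha_k\}$ is bounded below and away from zero. Write $c>0$ for the constant multiplying the merit function in (\ref{epsilonSet}) and (\ref{measurePos}), so that $\hat m_k(\alpha)=\min(\Z^k(\alpha)\s^k(\alpha))-c\,\phi(\v^k(\alpha))$, and $\{\v^k\}\subset\Omega(\epsilon)$ reads $\min(\Z^k\s^k)\ge c\,\phi(\v^k)\ge c\,\epsilon$, with $\phi(\v^k)\ge\epsilon$.

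First I would bound $\min(\Z^k(\alpha)\s^k(\alpha))$ from below by the componentwise identity (\ref{compWise}). Since $\{\dot\v^k\}$ and $\{\ddot\v^k\}$ are uniformly bounded by Lemma~\ref{ettzTheorem}, and $\sin\alpha\le\alpha$, $1-\cos\alpha\le\tfrac12\alpha^2$ on $[0,\pi/2]$, the two cross terms in (\ref{compWise}) are bounded in absolute value by $C_1\alpha^3$ for a constant $C_1$ independent of $k$; as the centering term $\sigma_k\mu_k\sin\alpha$ is the same in every coordinate, taking the minimum over $i$ gives
\[
\min(\Z^k(\alpha)\s^k(\alpha))\ \ge\ (1-\sin\alpha)\min(\Z^k\s^k)+\sigma_k\mu_k\sin\alpha-C_1\alpha^3 .
\]
Because $\mu_k$ is the average of $\{z_i^k s_i^k\}$, we have $\mu_k\ge\min(\Z^k\s^k)\ge c\,\phi(\v^k)$; keeping the centering term and substituting yields $\min(\Z^k(\alpha)\s^k(\alpha))\ge c\,\phi(\v^k)\bigl[1-(1-\sigma_k)\sin\alpha\bigr]-C_1\alpha^3$. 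For the second term of $\hat m_k$, I would argue exactly as in the proof of Lemma~\ref{checkAlpha}: by (\ref{ckAlpha}) (which uses (B4)) together with (\ref{objReduction}) and $\sin\alpha\le\alpha$,
\[
\phi(\v^k(\alpha))\ \le\ \phi(\v^k)+\phi'(0)\,\alpha+\tfrac12 L\alpha^2\ \le\ \phi(\v^k)\bigl[1-2(1-\sigma_k)\sin\alpha\bigr]+\tfrac12 L\alpha^2 .
\]

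Subtracting $c$ times the second display from the first, the $1$ and the $(1-\sigma_k)\sin\alpha$ terms combine cleanly because the first-order decrease of $\phi$ along the arc is twice that of $\min(\Z^k\s^k)$, and one is left with
\[
\hat m_k(\alpha)\ \ge\ c\,\phi(\v^k)(1-\sigma_k)\sin\alpha-\tfrac12 cL\alpha^2-C_1\alpha^3 .
\]
Now $\sigma_k<\tfrac12$ in all three algorithms (indeed $\sigma_k<\tfrac18$ in Algorithms~\ref{mainAlgo2} and \ref{mainAlgo1}), so $1-\sigma_k>\tfrac12$; also $\phi(\v^k)\ge\epsilon$ and $\sin\alpha\ge\tfrac{2}{\pi}\alpha$ on $[0,\pi/2]$, hence the right-hand side is at least $\alpha\bigl(\tfrac{1}{\pi}c\epsilon-\tfrac12 cL\alpha-C_1\alpha^2\bigr)$, which is nonnegative for all $\alpha$ up to some $\alpha^\ast>0$ depending only on $c,\epsilon,L,C_1$. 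This gives $\hat m_k(\alpha)\ge 0$ on $(0,\alpha^\ast]$ for every $k$, and the reduction of the first paragraph completes the proof. The main delicate point is the cancellation in the last display: one must keep the centering contribution $\sigma_k\mu_k\sin\alpha$ in the lower bound for $\min(\Z^k(\alpha)\s^k(\alpha))$ (discarding it would leave the decrease of $\phi$ unable to absorb the loss in $\min(\Z^k\s^k)$), and it is precisely the matching factor $(1-\sigma_k)$ in the two first-order rates — with the factor $2$ on the $\phi$ side coming from (\ref{objReduction}) — that produces the surviving positive term $c\,\phi(\v^k)(1-\sigma_k)\sin\alpha$; the cubic and quadratic remainders are harmless since all relevant quantities were already made uniformly bounded in Lemmas~\ref{boundAboveBelow} and \ref{ettzTheorem}.
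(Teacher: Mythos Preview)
Your argument is correct. Both your proof and the paper's rest on the componentwise identity (\ref{compWise}) together with the uniform bounds of Lemmas~\ref{boundAboveBelow} and \ref{ettzTheorem}, but the execution differs. The paper organizes the proof as an induction on $k$ (which is redundant, since the hypothesis already places every $\v^k$ in $\Omega(\epsilon)$) and controls $\phi(\v^{k+1})$ via (\ref{conC1}), carrying the Armijo factor $\rho$; it then appeals informally to uniform boundedness of $z_i^k s_i^k$, $\eta_1^k$, $\eta_2^k$ to conclude. You instead bound $\phi(\v^k(\alpha))$ directly through the Lipschitz estimate (\ref{ckAlpha}) and (\ref{objReduction}), which removes both the induction and the dependence on $\rho$ and produces an explicit uniform $\alpha^\ast$ from the balance of the positive first-order term $c\,\phi(\v^k)(1-\sigma_k)\sin\alpha$ against the $O(\alpha^2)$ and $O(\alpha^3)$ remainders. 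The payoff of your route is a cleaner, fully quantitative lower bound; the paper's route has the advantage of being closer in form to the actual decrease condition (\ref{conC1}) used in the algorithm, but at the cost of a less explicit final step.
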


\begin{proof}  
For each $k$, find $i$ such that  $z_i^k s_i^k = \min(\Z^k \s^k)$, and let 
$\eta_1^k=  \dot{z}_i^k\ddot{s}_i^k+\ddot{z}_i^k\dot{s}_i^k$ and
$\eta_2^k= \ddot{z}_i^k\ddot{s}_i^k-\dot{z}_i^k\dot{s}_i^k$.
Since $\{\dot{\v}^k\}$ and $\{\ddot{\v}^k\}$ are bounded
according to Lemma~\ref{ettzTheorem},
the sequences $\{ | \eta_1^k | \}$ and $\{ | \eta_2^k | \}$
are also bounded. The proof is based on induction.
For $k=1$, from (\ref{conC1}) and (\ref{compWise}), we have 
\begin{eqnarray}
& & \min (\Z^1  \s^1) - \frac{1}{2} \min (\Z^0 \s^0) \frac{\phi (\v^1)}{\phi (\v^0)}  \nonumber \\
& \ge & z_i^1 s_i^1 - \frac{1}{2} \min (\Z^0 \s^0) 
[1- 2\rho (1-\sigma_0) \sin(\alpha_0)]  \nonumber \\
& \ge &  {z}_i^0  {s}_i^0 (1-\sin(\alpha_0)) +\sigma_0  {\mu_0} \sin(\alpha_0)
-\eta_1^0\sin(\alpha_0)(1-\cos(\alpha_0))
+\eta_2^0 (1-\cos(\alpha_0))^2  \nonumber \\
& & -\frac{1}{2} (z_i^0 s_i^0) [1- 2\rho (1-\sigma_0) \sin(\alpha_0)] \nonumber \\
& = & \frac{1}{2} {z}_i^0  {s}_i^0 - {z}_i^0  {s}_i^0\sin(\alpha_0)
+\sigma_0  {\mu_0} \sin(\alpha_0) -\eta_1^0\sin(\alpha_0)(1-\cos(\alpha_0)) \nonumber \\
& & +\eta_2^0 (1-\cos(\alpha_0))^2
+{z}_i^0  {s}_i^0 \rho (1-\sigma_0) \sin(\alpha_0).
\label{inter1}
\end{eqnarray}

Since $\v^k \in \Omega(\epsilon)$, we know 
$z_i^k s_i^k \ge \frac{1}{2} \min(\Z^0  \s^0)
\frac{\phi(\v^k)}{\phi(\v^0)} \ge \frac{1}{2} \min(\Z^0  s^0)
\frac{\epsilon}{\phi(\v^0)} > 0$, therefore
there must be $\alpha_0 > 0 $ such that
the last express in (\ref{inter1}) is greater than zero.
Next, for $k>1$, assume that there exists $\alpha_{k-1} >0$ such that 
\begin{equation}
\min (\Z^k  \s^k) - \frac{1}{2} \min (\Z^0  \s^0) 
\frac{\phi (\v^k)}{\phi (\v^0)} > 0,
\label{inter2}
\end{equation}
then we show that there exists $\alpha_{k}>0$ such that 
\begin{equation}
\min (\Z^{k+1} \s^{k+1}) - \frac{1}{2} \min (\Z^0  \s^0) 
\frac{\phi (\v^{k+1})}{\phi (\v^0)} > 0.
\nonumber
\end{equation}

From (\ref{conC1}) and (\ref{compWise}), it follows 
\begin{eqnarray}
& & \min (\Z^{k+1} \s^{k+1}) - \frac{1}{2} \min (\Z^0 \s^0) 
\frac{\phi (\v^{k+1})}{\phi (\v^0)}  \nonumber \\
& \ge & z_i^{k+1} s_i^{k+1} - \frac{1}{2} \min (\Z^0 \s^0) 
\frac{\phi (\v^k)}{\phi (\v^0)}[1- 2\rho (1-\sigma_k) \sin(\alpha_k)]  
\nonumber \\
& \ge &  {z}_i^k  {s}_i^k (1-\sin(\alpha_k)) +\sigma_k  {\mu_k} \sin(\alpha_k)
-\eta_1^k\sin(\alpha_k)(1-\cos(\alpha_k))
+\eta_2^k (1-\cos(\alpha_k))^2  \nonumber \\
& & -\frac{1}{2} \min (\Z^0 \s^0)  
\frac{\phi (\v^k)}{\phi (\v^0)}[1- 2\rho (1-\sigma_k) \sin(\alpha_k)] 
\label{inter3}
\end{eqnarray}
Since ${z}_i^k  {s}_i^k \ge \min (\Z^k \s^k) \ge \frac{1}{2} \min(\Z^0 \s^0)
\frac{\epsilon}{\phi(\v^0)} > 0 $ and (\ref{inter2}), 
we can find $\alpha_k >0$ such
that the last express in (\ref{inter3}) is greater than zero.

We know that  $\{z_i^k s_i^k\}$ is bounded below and away from zero, 
$\{\sigma_k\} \ge 0$, and $\{ |\eta_1^k|\}$ and $\{|\eta_2^k|\}$ are bounded 
due to Lemma~\ref{ettzTheorem}. Therefore, $\{\hat{\alpha}_k\}$  
is bounded below and away from zero.
\hfill \qed
\end{proof}

From Lemmas~\ref{tildeAlpha}, \ref{barAlpha}, \ref{checkAlpha}, \ref{AlphaH}, we immediately
have the following result.

\begin{lemma}
Assume (B1)-(B4) hold, then for Algorithms \ref{mainAlgo0}, \ref{mainAlgo2}, and
\ref{mainAlgo1}, $\{\alpha_k\}$ is bounded below and away from zero.
\label{alphaKS}
\end{lemma}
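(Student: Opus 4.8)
The plan is to derive Lemma~\ref{alphaKS} as an immediate corollary of the four
preceding lemmas, since the step size $\alpha_k$ produced by each of the three
algorithms is by construction a minimum of finitely many quantities, each of which
has already been shown to be bounded below and away from zero. First I would recall
the definition: in each of Algorithms~\ref{mainAlgo0}, \ref{mainAlgo2}, and
\ref{mainAlgo1} the step size is set as $\alpha_k = \min\{\hat{\alpha}_k,
\check{\alpha}_k\}$ (and, tracing back through \eqref{alphaK},
\eqref{hatAlpha}, \eqref{alphaCheck}, and \eqref{alpha}, this quantity is in fact
$\min\{\tilde{\alpha}_k, \bar{\alpha}_k, \check{\alpha}_k, \hat{\alpha}_k\}$ because
$\hat{\alpha}_k \le \check{\alpha}_k \le \bar{\alpha}_k \le \tilde{\alpha}_k$ by the
nested way these are selected). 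So it suffices to bound each of the four sequences
away from zero uniformly in $k$.

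Next I would invoke the four lemmas in sequence, noting that the hypotheses match:
all of Lemmas~\ref{tildeAlpha}, \ref{barAlpha}, \ref{checkAlpha}, and \ref{AlphaH}
assume $\{\v^k\} \subset \Omega(\epsilon)$ together with (B1)--(B3) or (B1)--(B4),
which are exactly the standing assumptions of the present lemma. Lemma~\ref{tildeAlpha}
gives the explicit bound $\tilde{\alpha}_k \ge \frac{\zeta_1 + \sqrt{\zeta_1^2 + 4(1-\delta_1)\zeta_0\zeta_2}}{2\zeta_2} =: \tau_1 > 0$;
Lemma~\ref{barAlpha} gives $\bar{\alpha}_k \ge \tau_2 > 0$ for some constant $\tau_2$;
Lemma~\ref{checkAlpha} gives $\check{\alpha}_k \ge \frac{2\epsilon(1-\sigma)}{L} =: \tau_3 > 0$;
and Lemma~\ref{AlphaH} gives $\hat{\alpha}_k \ge \tau_4 > 0$ for some constant $\tau_4$.
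Setting $\tau = \min\{\tau_1, \tau_2, \tau_3, \tau_4\} > 0$, we conclude
$\alpha_k \ge \tau$ for all $k$ before the algorithm converges, which is the claim.
I would also remark that these lemmas were each proved to be valid for all three
algorithms (the relevant right-hand sides \eqref{secondOrder} and \eqref{secondOrderM}
share their last block rows, and $\ddot{\v}^k$ is bounded in either case by
Lemma~\ref{ettzTheorem}), so no case distinction among the algorithms is needed.

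There is essentially no obstacle here: the lemma is a bookkeeping corollary and the
only thing to be careful about is that the constants $\tau_1,\dots,\tau_4$ are
genuinely independent of $k$ — which is precisely what "bounded below and away from
zero" in each cited lemma asserts — and that the chain of $\min$'s in the algorithm
statements is faithfully reflected. I would keep the write-up to a few lines, stating
the definition of $\alpha_k$, citing the four lemmas, taking the minimum of the four
positive lower bounds, and noting uniformity in $k$.
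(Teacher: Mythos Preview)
Your proposal is correct and matches the paper's own proof essentially verbatim: both simply note that $\alpha_k = \min\{\tilde{\alpha}_k, \bar{\alpha}_k, \check{\alpha}_k, \hat{\alpha}_k\}$ and invoke Lemmas~\ref{tildeAlpha}, \ref{barAlpha}, \ref{checkAlpha}, and \ref{AlphaH} to conclude that each component, hence the minimum, is bounded below and away from zero. Your added remarks (explicit constants $\tau_1,\dots,\tau_4$ and the observation that the nested selection already forces $\hat{\alpha}_k$ to be the smallest) are correct elaborations but not needed beyond what the paper records.
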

\begin{proof}
For Algorithms \ref{mainAlgo0}, \ref{mainAlgo2}, and  \ref{mainAlgo1}, 
since $\tilde{\alpha}_k$, $\bar{\alpha}_k$, $\check{\alpha}_k$, and $ \hat{\alpha}_k$ 
are all bounded below and away from zero,
$\alpha_k=\min\{ \tilde{\alpha}_k,\bar{\alpha}_k, \check{\alpha}_k, \hat{\alpha}_k \}$ 
is bounded below and away from zero. This completes the proof.
\hfill \qed
\label{alphaK3}
\end{proof}

We are now ready to prove the convergence for all three Algorithms.

\begin{theorem}\label{global} 	
Assume (B1)-(B4) hold. Then, for Algorithms \ref{mainAlgo0}, \ref{mainAlgo2}, and
\ref{mainAlgo1}, we have (i) for all $k \ge 0$, the sequence $\{\phi(\v^k)\}$ 
decreases faster than a constant rate, and (ii) all three algorithms terminate in finite 
iterations.
\end{theorem}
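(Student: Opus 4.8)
The plan is to derive a uniform geometric decrease of the merit function $\{\phi(\v^k)\}$ by feeding the uniform lower bound on the step size (Lemma~\ref{alphaKS}) into the sufficient-decrease estimate (Lemma~\ref{decrease}, or Lemma~\ref{decrease1} when $\sigma=0$), and then to read off finite termination directly from the stopping test.

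First I would prove, by induction on $k$, the invariant that $\v^k\in\Omega(\epsilon)$ as long as the algorithm has not stopped, i.e., as long as $\phi(\v^k)>\epsilon$. The base case $\v^0\in\Omega(\epsilon)$ follows from the initialization and the definition (\ref{epsilonSet}). For the inductive step, assuming $\v^0,\dots,\v^k\in\Omega(\epsilon)$, all the boundedness results (Lemmas~\ref{kvBound}--\ref{boundedness}, \ref{lem:inverseFprime}, \ref{ettzTheorem}) and the step-size lemmas apply at iteration $k$, so $\alpha_k$ given by (\ref{alphaK}) is well defined and positive; by the construction of $\hat\alpha_k$ in (\ref{measurePos})--(\ref{hatAlpha}) the updated point satisfies the neighbourhood inequality in (\ref{epsilonSet}), while Lemma~\ref{decrease}/\ref{decrease1} (invoked at $\alpha_k\le\check\alpha_k$, which enforces (\ref{cond1}) via (\ref{alphaCheck})) gives $\phi(\v^{k+1})<\phi(\v^k)\le\phi(\v^0)$, and non-termination gives $\phi(\v^{k+1})>\epsilon$; hence $\v^{k+1}\in\Omega(\epsilon)$. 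For Algorithms~\ref{mainAlgo2} and \ref{mainAlgo1} one must additionally check that the warm-restart update of Step~15 only further decreases $\phi$ and still preserves membership in $\Omega(\epsilon)$.

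With this invariant in hand, Lemma~\ref{alphaKS} supplies a constant $\bar\alpha\in(0,\pi/2]$ with $\alpha_k\ge\bar\alpha$ for all non-terminal $k$. Since each algorithm chooses $\sigma_k\le\bar\sigma<\frac12$, Lemma~\ref{decrease} (respectively Lemma~\ref{decrease1}, which permits $\sigma=0$ in Algorithms~\ref{mainAlgo2} and \ref{mainAlgo1}) together with (\ref{conC1}) yields
\[
\phi(\v^{k+1})\le\phi(\v^k)\bigl(1-2\rho(1-\sigma_k)\sin(\alpha_k)\bigr)\le (1-c)\,\phi(\v^k),
\qquad c:=2\rho(1-\bar\sigma)\sin(\bar\alpha)\in(0,1),
\]
and iterating gives $\phi(\v^k)\le(1-c)^k\phi(\v^0)$, which is assertion (i) (decrease at least at the constant rate $1-c<1$). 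For assertion (ii), since $(1-c)^k\to 0$, there is a finite $K$ — for instance any $K\ge\log(\epsilon/\phi(\v^0))/\log(1-c)$ — with $(1-c)^K\phi(\v^0)<\epsilon$, so $\phi(\v^K)<\epsilon$ and the stopping test in Step~4 of each algorithm is met no later than iteration $K$.

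The main obstacle is the first step: verifying that the step-size construction — in particular the $\hat\alpha_k$ clause, and for Algorithms~\ref{mainAlgo2} and \ref{mainAlgo1} also the warm-restart update — genuinely keeps every iterate inside $\Omega(\epsilon)$, so that the whole chain of earlier lemmas (all conditioned on $\{\v^k\}\subset\Omega(\epsilon)$) may legitimately be invoked at each iteration. Once that invariant is secured, the remaining argument is exactly the short computation displayed above.
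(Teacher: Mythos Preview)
Your approach is essentially the paper's own: invoke Lemma~\ref{alphaKS} to get a uniform positive lower bound on the step size, feed it into the contraction estimate \eqref{conC1}, iterate, and read off finite termination. Your explicit induction that the iterates remain in $\Omega(\epsilon)$ is in fact more careful than the paper, which tacitly assumes this invariant when it appeals to Lemma~\ref{alphaKS}.

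One concrete slip: in the algorithms $\bar\sigma$ is the \emph{lower} bound on $\sigma_k$, not the upper bound (the parameter line reads $\bar\sigma\le\sigma_k<\min\{\tfrac12,\ldots\}$ or $\min\{\tfrac18,\ldots\}$). Hence $1-\sigma_k\le 1-\bar\sigma$, and your definition $c:=2\rho(1-\bar\sigma)\sin(\bar\alpha)$ does \emph{not} give the inequality $\phi(\v^{k+1})\le(1-c)\phi(\v^k)$. You need to use the actual upper bound on $\sigma_k$ (namely $\tfrac12$ for Algorithm~\ref{mainAlgo0} and $\tfrac18$ for Algorithms~\ref{mainAlgo2} and~\ref{mainAlgo1}), setting for instance $c:=2\rho\cdot\tfrac12\cdot\sin(\bar\alpha)$; with that correction the rest of your argument goes through unchanged.
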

\begin{proof}
Since $\{\alpha_k\}$ is bounded below and away from zero,
there must be an ${\alpha^*} > 0$ such that 
$\alpha_k \ge \alpha^* = \min \{  \tilde{\alpha}_k,\bar{\alpha}_k, \check{\alpha}_k, \hat{\alpha}_k   \} > 0$
for Algorithm~\ref{mainAlgo0}, ~\ref{mainAlgo2}, and \ref{mainAlgo1}.
Since $\alpha^*$ is bounded below and away from zeros, this shows that 
$\{\phi(\v^k)\}$ decreases faster than a constant rate due to \eqref{conC1}.
Since $\phi(\v^0)$ is bounded, and $\{\phi(\v^k)\}$ decreases faster than
a constant rate, it needs only finite iterations $K$ to 
find $\phi(\v^K) \le \epsilon$ with $(\w^K, \s^K, \z^K) \in \R_+^{3p}$.
\hfill \qed
\end{proof}

\section{Preliminary test results}
\label{sec:test}

The benefit of using arc-search can easily be seen from Problem 19 (HS19) in 
\cite{hs81} which is given as follows:
\begin{align}
\begin{array}{rcl}
\min &:& (x_1 -10)^3 + (x_2 -20)^3 \\
\textrm{s.t.} &:& (x_1 -5)^2 +(x_2-5)^2 - 100 \ge 0, \\
 & &  -(x_2 - 5)^2 -(x_1-6)^2 +82.81 \ge 0, \label{exampleNP}\\
& & 13 \le x_1 \le 100, \\ 
& &  0 \le x_2  \le 100.
\end{array}
\end{align}
Figure 1(a) shows the real constraint set of HS19 which is between 
two curves in red lines. The contour lines describe the level of the objective function. 
The optimal solution is at the intersection of the two red curves marked by 'x'. 
Figure 1(b) shows that the ellipse constructed by using
(\ref{vAlpha}) passes the current iterate marked by a red circle 'o'. 
Clearly, searching along the ellipse is better than searching any straight 
line for this nonlinear constraint problem. Figure 1(c) 
shows the iterates that approach to the optimal solution.

\begin{figure}
    \centering
    \includegraphics[width=0.3\textwidth]{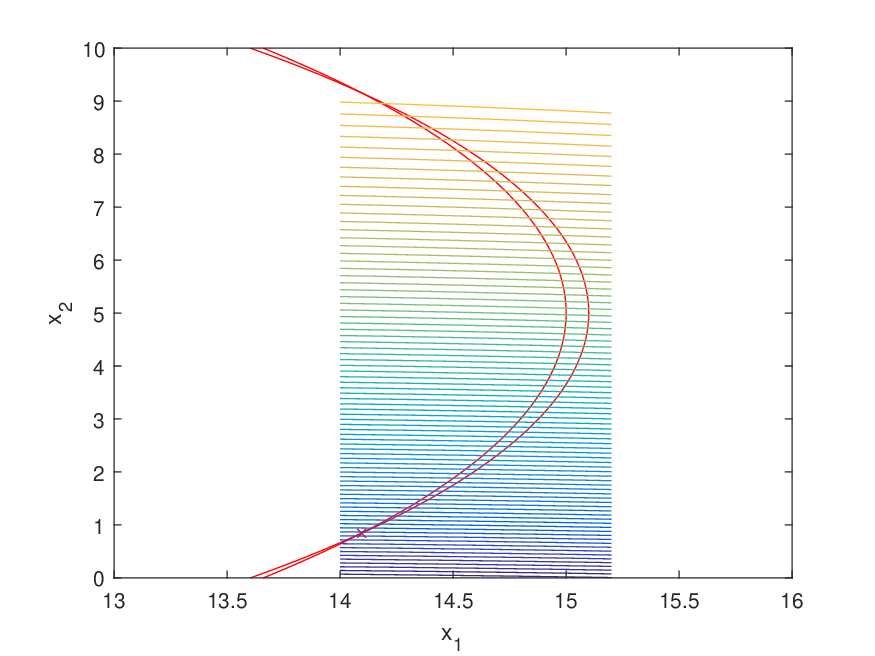}
    \includegraphics[width=0.3\textwidth]{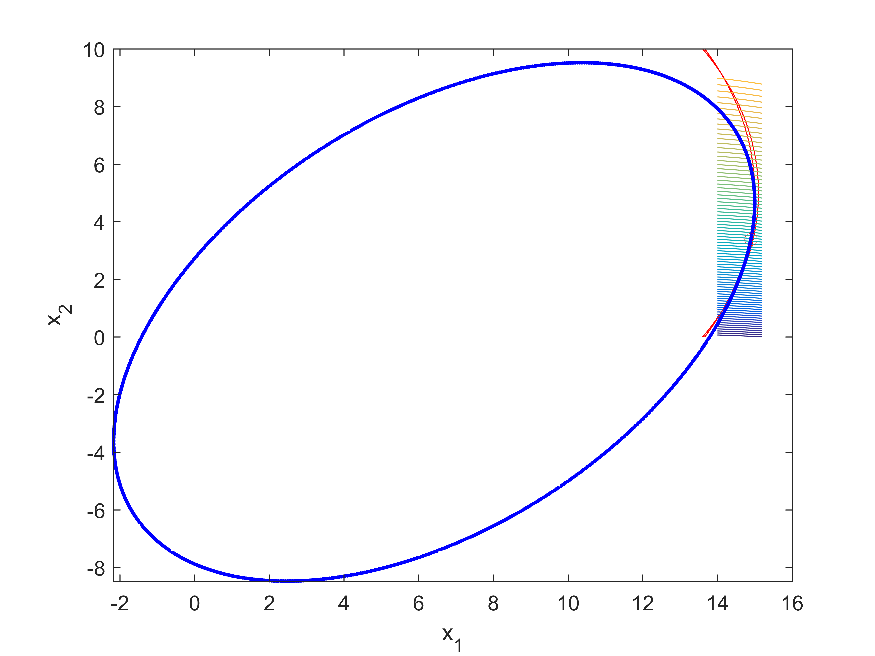}
    \includegraphics[width=0.3\textwidth]{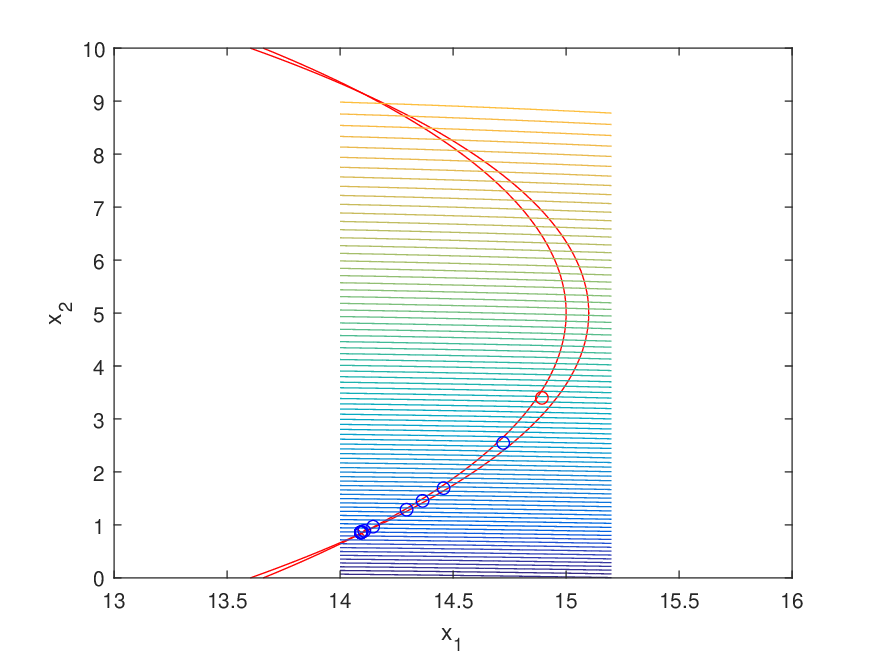}
    \caption{(a) The constraint set of problem HS19. 
(b) Ellipse approximation of the central-path of problem HS19. 
(c) Ellipse iterates of the central-path of problem HS19.}
    \label{HS19}
\end{figure}

%\begin{figure}[ht!]
%\centering
%\includegraphics[width=4in,height=3in]{HS19constraintSet.eps}
%\caption{The constraint set of problem HS19.} 
%\label{HS19constraintSet}
%\end{figure}
%
%\begin{figure}[ht!]
%\centering
%\includegraphics[width=4in,height=3in]{HS19ellipse.eps}
%\caption{Ellipse approximation of the central-path of problem HS19.} 
%\label{HS19ellipse}
%\end{figure}
%
%\begin{figure}[ht!]
%\centering
%\includegraphics[width=4in,height=3in]{HS19iterates.eps}
%\caption{Ellipse iterates of the central-path of problem HS19.} 
%\label{HS19iterates}
%\end{figure}

\subsection{Two simple but hard problems}

In \cite{ld20}, two simple and hard problems are tested. To demonstrate the
efficiency of the proposed algorithm, we tested the algorithm against these problems.
The first one was extensively studied in \cite{wb00}.
\begin{align}
\begin{array}{rcl}
\min_{x_1, x_2, x_3} &:& x_1  \\
\textrm{s.t.} &:& x_1^2 -x_2 -1 =0, \label{LD1}  \\
 & & x_1-x_3 -2 =0, \\
& &  x_2  \ge 0, \hspace{0.1in} x_3  \ge 0.
\end{array}
\end{align}
It is showed in \cite{wb00} that many interior-point algorims using line search 
fail to solve this problem because the iterates is confined to a region of 
an inconsistent system of equations and inequalities. It is easy to check that 
Condition (B3) is violated for this problem. However, a simple implementation
trick (which will be discussed in a follow-on paper) will get rid of the difficulty.
As a matter of fact, the algorithm converges to the optimal solution $(2,3,0)$
in $39$ iterations from the standard initial condition (-4, 1,1 ).

The second example is a standard test problem taken from \cite[Problem 13]{hs81}:
\begin{align}
\begin{array}{rcl}
\min_{x_1, x_2} &:& (x_1-2)^2+x_2^2  \\
\textrm{s.t.} &:& (1-x_1)^3 - x_2 \ge 0, \label{LD2} \\
& &  x_1  \ge 0, \hspace{0.1in} x_2  \ge 0.
\end{array}
\end{align}
This problem was not solved in \cite{sv00,yamashita98}. Starting from the standard
initial point (-2,-2), the proposed algorithm terminates at (0.9997,0) after 25 iterations, which is 
closer to the optimal solution (1,0) than the result of (0.9905,-0.0000) given in \cite{ld20}.

\subsection{Test on some unsolvable problems by \cite{yiy22}}

\begin{table}
\tiny
	\centering
	\caption{Test results for a subset of problems in \cite{hs81}}
	\begin{tabular}{|c||c|c|c|c|} \hline
& \multicolumn{4}{|c|}{Agorithm \ref{mainAlgo1} } \\ \hline
Prob & Obj & Iter & Seconds & Conv $\phi$  \\ \hline
16  & 0.25 & 22 & 0.231982 & 1.2313e-15 \\ \hline
17  & 1 & 22 & 0.229433 & 1.3279e-15 \\ \hline
19   & -6961.8139 & 22 & 0.206065 & 7.563e-11\\ \hline
23  & 2 & 22 & 0.253886 & 3.0712e-13 \\ \hline
32   & 1 & 22 & 0.273155 & 7.6672e-16 \\ \hline
59  & -7.8028 & 24 & 0.253580 & 2.4705e-12 \\ \hline
64   & 6299.8424 &19 & 0.167498 & 1.139e-10  \\ \hline
66    & 0.51816 & 22 & 0.275467 & 1.5841e-12 \\ \hline
71   & 17.014 & 37 & 0.576570  &  3.1672e-13 \\ \hline
80   & 0.05395  & 20 & 0.432452 &  5.7296e-15 \\ \hline
84  & -5280335.2971 & 27 & 0.689569 & 6.1572e-05 \\ \hline
95   & 0.015621  & 23 & 0.656584 & 4.4154e-13 \\ \hline
96   & 0.015621 & 20 & 0.520563 & 3.6668e-13 \\ \hline
97   & 4.6451 & 25 & 0.662390 & 2.6823e-11  \\ \hline
98    & 4.6451 & 26 & 0.793426 & 6.9447e-12  \\ \hline
101   & 1809.7648 & 53  & 10.802482  &  1.5096e-09  \\  \hline
108   & -0.86603 & 22 & 1.674474 & 1.1108e-15 \\ \hline
	\end{tabular}
\label{HStest}
\normalsize
\end{table}

We tested Algorithm \ref{mainAlgo1} on a subset of widely used benchmark 
problems (see Table \ref{HStest}) in \cite{hs81}. It is worthwhile to note that the 
algorithm proposed in \cite{yiy22} failed to solve this subset of problems. 
But Algorithm \ref{mainAlgo1} solves all these problems. The test result for
Algorithm~\ref{mainAlgo1} is summarized in Table \ref{HStest}.

The preliminary test result for these benchmark problems shows that (a) the newly developed 
algorithms are likely more robust than the one in \cite{yiy22}, and (b) it is very important to 
appropriately handle the ill-conditioned matrix (\ref{firstOrder}) (this will be discussed in
detail in a follow-on paper that focuses on the implementation and 
will report more extensive test results).
%, and (c) Algorithm \ref{mainAlgo1} is in general more
%efficient than Algorithm \ref{mainAlgo2}, but for Problem HS101, Algorithm \ref{mainAlgo1}
%does not find the optimal solution while Algorithm \ref{mainAlgo2} does. 

The proposed algorithm is also tested for some spacecraft trajectory optimization problems
\cite{yph24} and solves a spacecraft formation flying orbit design problem \cite{yang25}.

\section{Conclusions}
\label{sec:conclusions}

In this paper, we proposed an arc-search interior-point algorithm for the nonlinear
optimization problem. The algorithm searches the optimizer along an arc that is part of
an ellipse instead of a traditional straight line. Since the arc stays in the interior set
longer than any straight line, the arc step is longer than a line step, thereby producing
a better iterate and improving the efficiency of the algorithm. Convergence of the
algorithm is established under some mild conditions. Preliminary test results show that
the algorithm is very promising. A follow-on paper is under preparation to provide 
details of the algorithm implementation and to report extensive test result.

\section{Acknowledgment}

This research is partially supported by NASA’s IRAD 2023 fund SSMX22023D. 
The author is grateful to the anonymous reviewers and Dr. Robert Pritchett at 
Goddard Space Fight Center of NASA for their valuable comments that helped 
the author to improve the presentation of the paper.

%\section{Data availability}

\section{Declarations}
%
%\subsection{Funding}
%The authors did not receive support from any organization for the submitted work.
%
\subsection{Conflict of interests}
Author declares no conflict of interests.
%
%\subsection{Financial interests}
%Author Yaguang Yang declares no financial interests.
%
%
\subsection{Data availability}

The datasets generated during and/or analysed during the current study
are available from the corresponding author on reasonable request.

\end{document}